\newtheorem{theorem}{Theorem}[section]
\newtheorem{lemma}[theorem]{Lemma}
\newtheorem{remark}[theorem]{Remark}
\numberwithin{equation}{section}
\newcommand \ih {\hat{i}}
\newcommand \ihr{{\color{red} {\ih}}}
\newcommand \ic {\check{i}}
\newcommand \jh {\hat{j}}
\newcommand \jhr {{\color{red} {\jh}}}
\newcommand \jc {\check{j}}
\newcommand \jcr {{\color{red} {\jc}}}
\newcommand \kh {\hat{k}}
\newcommand \khr {{\color{red} {\kh}}}
\newcommand \kc {\check{k}}
\newcommand \kcr {{\color{red} {\kc}}}
\newcommand \lh {\hat{l}}
\newcommand \lc {\check{l}}
\newcommand \alphar {{\color{red} {\alpha}}}
\newcommand \betar {{\color{red} {\beta}}}
\newcommand \gammar{{\color{red} \gamma}}
\newcommand \ar {{\color{red} {a}}}
\newcommand \br {{\color{red} {b}}}
\newcommand \cred {{\color{red} {c}}}
\newcommand \jr {{\color{red} {j}}}
\newcommand \kr {{\color{red} {k}}}
\newcommand \delb {\overline{\del}}
\newcommand \delu {\underline{\del}}
\newcommand \Au {\underline{A}}
\newcommand \Bu {\underline{B}}
\newcommand \Pu {\underline{P}}
\newcommand \Tu {\underline{T}}
\newcommand \Gt {\widetilde G}
\newcommand \del \partial
\newcommand \la \langle
\newcommand \ra \rangle
\let\oldmarginpar\marginpar
\renewcommand\marginpar[1]{\-\oldmarginpar[\raggedleft\footnotesize #1]%
{\raggedright\footnotesize #1}}
\begin{document}
\title{Global existence of small amplitude solution to nonlinear system of wave and Klein-Gordon equations in four space-time dimensions}
\author{Yue MA\footnote{Laboratoir Jacques-Louis Lions. Email: ma@ann.jussieu.fr}}
\maketitle
\begin{abstract}
In this article one will develop a so-called hyperboloidal foliation
method, which is an energy method based on a foliation of space-time
into hyperboloidal hypersurfaces. This method permits to treat the
wave equations and the Klein-Gordon equations in the same framework
so that one can apply it to the coupled systems of wave and
Klein-Gordon equations. As an application, one will establish the
global-in-time existence of small amplitude solution to the coupled
wave and Klei-Gordon equations with quadratic nonlinearity in four
space-time dimensions under certain conditions. Compared with those
introduced by S. Katayama, the conditions imposed in this article
permit to include some important nonlinear terms. All of these
suggests that this method may be a more natural way of regarding the
wave operator.
\end{abstract}
\tableofcontents
\section{Introduction}
One will consider Cauchy problems associated to a class of coupled
nonlinear wave and Klein-Gordon equations. The following is a
prototype:
\begin{equation}\label{intro Katayama}
\left\{
\aligned
&\Box u = N(\del u,\del u) + Q_1(\del u,\del v) + Q_2(\del v,\del v),
\\
&\Box v + v = Q_3(\del u,\del u) + Q_4(\del u, \del v) + Q_5(\del v,\del v),
\\
&u(B+1,x) = \varepsilon u_0,\quad v(B+1,x) = \varepsilon v_0,
\\
&\del_tu(B+1,x) = \varepsilon u_1,\quad \del_t v(B+1,x) = \varepsilon v_1.
\endaligned
\right.
\end{equation}
Here $u_i$ and $v_i$ are regular functions supported on disc $|x|\leq B$. $N(\cdot,\,\cdot)$ is a standard null quadratic form and $Q_i(\cdot,\,\cdot)$ are
arbitrary quadratic forms.

The method introduced in this article is a type of commuting vector field approach. With this techniques s. Klainerman has firstly established the global-in-time
existence of regular solution to nonlinear wave equations with null condition. (see \cite{Kl1} details). The idea of this
method also works when dealing with the nonlinear Klein-Gordon equations with quadratic nonlinearity (see \cite{Kl2} for
details). But when one attempts to try this idea on coupled system of wave and Klein-Gordon equations, one will face
some difficulties.
The main difficulty is that, one of the conformal Killing vector field of the wave equation, the scaling vector field $S:= t\del_t + r\del_r$ is {\bf not} a
conformal Killing vector field of Klein-Gordon equation, so that $S$ can not be used any longer. One may call this ``the difficulty of $S$".

In \cite{Ka}, S. Katayama has established the global-in-time existence of regular solution to \eqref{intro Katayama}.
To overcome ``the difficulty of $S$", Katayama has used an other version of Sobolev type estimate for replacing the classical Klainerman-Sobolev inequality, and
an technical $L^{\infty}-L^{\infty}$ estimates.

The main result of this article (see section \ref{main sec}) will improve part of the result of \cite{Ka}. The method
is not a generalization of the techniques introduced in \cite{Ka} but a new type of energy method based on a foliation of
space-time into hyperboloidal hypersurfaces. As far as the author is concerned, this foliation first appears in \cite{Ho},
where L. H\"ormander has developed an ``alternative energy method" for dealing the global existence of quasilinear Klein-Gordon equation. His observation is as
follows. Consider the following Cauchy problem associated to the linear Klein-Gordon equation in $\mathbb{R}^{n+1}$
\begin{equation}\label{intro eq linear}
\left\{
\aligned
&\Box u + a^2u = f,
\\
&u(B+1,x) = u_0,\quad u_t(B+1,x) = u_1,
\endaligned
\right.
\end{equation}
where $u_0,\, u_1$ are regular functions supported on $\{(B+1,x):|x|\leq B\}$ and $f$ is also a regular
function supported on
$$
\Lambda':= \{(t,x): |x|\leq t-1\},
$$
with $a,B>0$ two fixed positive constants.
By the Huygens' principle, the regular solution of \eqref{intro eq linear} is supported in
$$
\Lambda' \cap \{t\geq B+1\}.
$$
One denotes by:
$$
H_T := \{(t,x) : t^2 - x^2 = T^2,\, t>0\}
$$
and
$$
G_{B+1} = \Lambda'\cap \{(t,x): \sqrt{t^2- x^2}\geq B+1\},
$$
one can develop a hyperboloidal foliation of $G_{B+1}$, which is
$$
G_{2B} = H_T \times [B+1, \, \infty).
$$
Then, taking $\del_t u$ as multiplier, the standard procedure of energy estimate leads one to the following
energy inequality
$$
E_m(T,\,u)^{1/2} \leq E_m(B+1,\,u)^{1/2} + \int_{B+1}^T ds \bigg(\int_{H_T}f^2\bigg)^{1/2},
$$
where
$$
E_m(T,\,u) := \int_{H_T} \sum_{i=1}^{3}\big((x^i/t)\del_t u + \del_i u\big)^2 + ((T/t)\del_t u)^2 + (a/2)u^2\, dx .
$$
Then, H\"ormander has developed a Sobolev type estimate, see the lemma 7.6.1 of \cite{Ho}. Combined with the
energy estimate, he has managed to establish the decay estimate:
$$
\sup_{H_T} t^{n/2}|u| \leq \sum_{|I|\leq m_0} E_m(H_T,\,Z^I u)^{1/2}\leq E_m(H_{B+1},\,Z^I u)^{1/2} + \int_{B+1}^T ds \bigg(\int_{H_s}Z^I f^2\bigg)^{1/2},
$$
where $m_0$ is the smallest integer bigger the $n/2$.

But in the proof of \cite{Ho}, it seems that the only used term of the energy $E_m(H_T,\,u)$ is the last term $u^2$. The first two terms seem to be
omitted, at least when doing decay estimates. The new observation in this article is that the first two terms of the energy can also be used
for estimating some important derivatives of the solution. This leads one to the possibility of applying this
method on the case where $a=0$, which is the wave equation, so that the wave equations and the Klein-Gordon equations can
be treated in the same framework. This is the key of dealing the coupled wave and Klein-Gordon equations, and one may call it
hyperboloidal foliation method.

Here is the structure of this article. In section \ref{pre sec}, one will introduce the basic
theory of hyperboloidal foliation method including the energy estimates, the estimates on commutators and the decay estimates. For the convenience of proof, a new
frame, the so-called ``one frame" will be introduced to replace the classical ``null frame". The main result will be stated in section \ref{main sec}.
In this article
one will not cite any technical result. All tools used will be established in section \ref{pre sec}.
\section{Preliminaries}\label{pre sec}
\subsection{Notation}
First, one makes the following important conventions of index. The Latin index $a,b,c$ denote one of the positive integers $1,2,3$. The Greek index
$\alpha,\beta,\gamma$ denote one of the integers $0,1,2,3$. The Einstein's summation will be used. But to avoid possible confusion,
the dummy index will be printed in red.

One denotes by $H_T$ the hyperboloid $\{(t,x): t^2 - |x|^2 =T^2, t>0\}$ with its hyperbolic radius equals to $T$. $\Lambda'$ denotes the cone
$\{(t,x):|x|\leq t-1\}$. One denotes by $\mathcal {G}_{B+1}^{T}$ the region $\{(t,x): B+1\leq t^2-x^2\leq T^2\}$. Notice that in the region
$\Lambda' \cap \{|x|\leq t/2\}$, $t\leq \frac{2 \sqrt{3}}{3}T$.

One introduces the following vector fields:
$$
\aligned
&H_a := x^a\del_t + t\del_a,
\\
&\delb_a := t^{-1}H_a = \big(x^a/t\big)\del_t + \del_a.
\endaligned
$$

One sets $\mathscr{Z}$ a family of vector fields consists of $Z_{\alpha}$, where
$$
Z_{\alpha} := \del_{\alpha},
$$
and
$$
Z_{3+a} := H_a.
$$
One notices that for any $Z,\,Z'\in \mathscr{Z}$,
\begin{equation}\label{pre commutator Z}
[Z,Z'] \in \mathscr{Z},
\end{equation}
which means $\mathscr{Z}$ forms a Lie algebra.
For a general multi-index $I$, denote by $Z^I$ a $|I|-th$ order derivatives
$$
Z^I := Z_{I_1} \cdots Z_{I_{|I|}}.
$$
\subsection{Energy estimates}
One considers the following differential system:
\begin{equation}\label{pre eq energy}
\left\{
\aligned
&\Box w_i + G_i^{\jr\alphar\betar}\del_{\alphar\betar} w_{\jr} + {D_i}^2w_i = F_i,
\\
&w_i|_{H_{B+1}} = {w_i}_0,\quad \del_t w_i|_{H_{B+1}} = {w_i}_1.
\endaligned
\right.
\end{equation}
Here $G_i^{j\alpha\beta}$ and $F_i$ are regular functions supported in $\Lambda'$. ${w_i}_0,{w_i}_1$ are regular functions supported on $H_{B+1}\cap \Lambda'$.
To guarantee the hyperbolicity, one supposes that
\begin{equation}\label{pre condition symmetry}
G_i^{j\alpha\beta} = G_j^{i\alpha\beta},\quad G_i^{j\alpha\beta} = G_i^{j\beta\alpha}.
\end{equation}
$D_i$ are constants. $D_i = 0$ with $1\leq i\leq j_0$ and $D_i > \sigma >0$ when $j_0+1 \leq i\leq j_0 + k_0 =: n_0$.

One introduces the following ``standard" energy on hyperboloid $H_T$:
\begin{equation}\label{pre exression of energy}
\aligned
E_m(T,\,w_i)
&:= \int_{H_T}\bigg(|\del_t w_i|^2 + \sum_a|\del_a w_i|^2 + (2x^{\ar}/t)\del_tw_i \del_{\ar} w_i + 2(D_iu)^2 \bigg) dx,
\\
&= \int_{H_T} 2(D_iw_i)^2 + \sum_a |\delb_a w_i|^2 + \big((T/t)\del_t w_i\big)^2 dx,
\\
&= \int_{H_T} 2(D_iw_i)^2 + \sum_a \big((T/t) \del_a w_i\big)^2 + \sum_a\big((r/t)\del_a w_i + \omega^a \del_t w_i \big)^2 dx.
\endaligned
\end{equation}
And the ``curved" energy associated to the principal part of \eqref{pre eq energy}:
\begin{equation}\label{pre expression of curved energy}
E_G(s,w_i) :=
E_m(s,w_i) + 2\int_{H_s} \big(\del_t w_i \del_{\betar}w_{\jr} G_i^{\jr\alpha\betar}\big) \cdot (1,-x^a/t) dx
- \int_{H_s} \big(\del_{\alphar}w_i\del_{\betar}w_{\jr} G_i^{\jr\alphar\betar}\big)dx,
\end{equation}
here the second term on the right-hand-side is the Euclidian inner product of the vector \\
$\big(\del_t w_i \del_{\betar}w_{\jr} G_i^{\jr\alpha\betar}\big)_{\alpha}$ and the
vector $(1,-x^a/t)$.
\begin{remark}
From the structure of the strand energy on hyperboloid, one sees that the $L^2$ norm of $\delb_a w$ and $(T/s)\del_{\alpha}w$ are controlled directly.
These are (relatively) good derivatives. In general they enjoy better decay than $\del_{\alpha}w$. Notice that
$$
\sum_a \omega^a\big((r/t)\del_a + \omega^a\del_t\big)w = t^{-1}Sw.
$$
So the derivative $t^{-1}S$ is also good. But in this article it will not be used.
\end{remark}

Then in general the following result holds:
\begin{lemma}[Energy estimates]\label{pre lem energy}
Let $\{w_i\}$ be regular solution of \eqref{pre eq energy}. If the following estimates hold:
\begin{equation}\label{pre lem energy curved energy is big}
\sum_i E_m(s,w_i)\leq 3\sum_i E_G(s,w_i),
\end{equation}
\begin{equation}\label{pre lem energy curveterm is small}
\bigg|\int_{H_s}\frac{s}{t}\bigg( \del_{\alphar}G_i^{\jr\alphar\betar}\del_tw_i \del_{\betar}w_{\jr}
- \frac{1}{2}\del_tG_i^{\jr\alphar\betar}\del_{\alphar}w_i\del_{\betar}w_{\jr}\bigg) dx\bigg|
\leq M(s) E_m(s,w_i)^{1/2},
\end{equation}
\begin{equation}\label{pre lem energy source}
\bigg(\int_{H_s}\big|F_i\big|^2dx\bigg)^{1/2}ds \leq L_i(s).
\end{equation}
Then the following energy estimate hold:
$$
\bigg(\sum_i E_m(s,w_i)\bigg)^{1/2} \leq \bigg(\sum_i E_m(B+1,w_i)\bigg)^{1/2} + \sqrt{3}\int_{B+1}^s \sum_i L_i(\tau) + \sqrt{n_0}M(\tau)d\tau.
$$
\end{lemma}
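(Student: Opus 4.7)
The plan is a standard multiplier argument: take $\del_t w_i$ as multiplier, sum over $i$, and integrate over the hyperboloidal slab $\mathcal{G}_{B+1}^s$. The quantity naturally produced on the boundary is the curved energy $E_G$, and the hypotheses are structured to convert the resulting estimate into a bound on $E_m$ via \eqref{pre lem energy curved energy is big}.

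The first step is to establish the pointwise differential identity
\[
\del_t w_i \big(\Box w_i + G_i^{\jr\alphar\betar}\del_{\alphar\betar} w_{\jr} + D_i^2 w_i\big) = \del_{\alphar} V_i^{\alphar} + R_i,
\]
where the current $V_i$ carries the curved energy flux and $R_i$ is the integrand of \eqref{pre lem energy curveterm is small}. The flat part contributes the textbook current $V_i^0=\tfrac{1}{2}(|\del_t w_i|^2+\sum_a|\del_a w_i|^2+D_i^2 w_i^2)$, $V_i^a=-\del_t w_i \del_a w_i$. For the cross term, one integrates by parts once in $\del_{\alphar}$ and then exploits both symmetries \eqref{pre condition symmetry}: summing in $i$ and swapping dummy indices collapses the piece $G_i^{\jr\alphar\betar}\del_{\betar}w_{\jr}\del_{\alphar}\del_t w_i$ into the single $\del_t$-derivative $\tfrac{1}{2}\del_t(G_i^{\jr\alphar\betar}\del_{\alphar}w_i\del_{\betar}w_{\jr})$ modulo the explicit correction $-\tfrac{1}{2}\del_t G_i^{\jr\alphar\betar}\del_{\alphar}w_i\del_{\betar}w_{\jr}$. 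Collecting everything, the divergences assemble into a current whose flux through $H_\tau$, read with the Euclidean outward conormal proportional to $(1,-x^a/t)$ together with the matching surface element, reproduces exactly the $G$-dependent boundary integrands in \eqref{pre expression of curved energy}; the leftover non-divergence terms are exactly the integrand of \eqref{pre lem energy curveterm is small}.

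Applying the divergence theorem on $\mathcal{G}_{B+1}^s$ and the co-area identity $dt\,dx=(\tau/t)\,d\tau\,dx$ then gives an identity of the form
\[
\sum_i E_G(s, w_i) - \sum_i E_G(B+1, w_i) = 2\int_{B+1}^s\int_{H_\tau}\sum_i\big(\del_t w_i\, F_i + R_i\big)\frac{\tau}{t}\,dx\,d\tau.
\]
On each slice $H_\tau$ the source term is bounded by Cauchy-Schwarz, noting that $(\tau/t)\del_t w_i$ is directly controlled by $E_m$: $|\int_{H_\tau}\del_t w_i F_i(\tau/t)dx|\leq E_m(\tau, w_i)^{1/2}L_i(\tau)$; summing in $i$ and using $E_m(\tau, w_i)^{1/2}\leq \big(\sum_j E_m(\tau, w_j)\big)^{1/2}$ produces the factor $\big(\sum_j E_m\big)^{1/2}\sum_i L_i(\tau)$. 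The remainder is controlled by \eqref{pre lem energy curveterm is small} combined with the elementary Cauchy-Schwarz bound $\sum_{i=1}^{n_0}x_i^{1/2}\leq \sqrt{n_0}\big(\sum_i x_i\big)^{1/2}$, giving $\sqrt{n_0}M(\tau)\big(\sum_j E_m\big)^{1/2}$. Invoking \eqref{pre lem energy curved energy is big} to bound $\big(\sum E_m\big)^{1/2}\leq \sqrt{3}\big(\sum E_G\big)^{1/2}$ produces a linear differential inequality for $\big(\sum_i E_G\big)^{1/2}$ (via the standard identity $\tfrac{d}{d\tau}\sqrt{e}=\tfrac{e'}{2\sqrt{e}}$), which integrates from $B+1$ to $s$ to the advertised bound after applying \eqref{pre lem energy curved energy is big} once more to pass from $E_G$ back to $E_m$ at the endpoints.

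The main obstacle is purely combinatorial: one must verify that the two symmetries of $G$ collapse all cross contributions precisely into the two $G$-dependent boundary integrands defining $E_G-E_m$ in \eqref{pre expression of curved energy} and exactly the interior integrand of \eqref{pre lem energy curveterm is small}, with no leftover terms and with the correct factors of $\tfrac{1}{2}$ and signs.
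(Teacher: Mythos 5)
Your proposal is correct and follows essentially the same route as the paper's own proof: the $\del_t w_i$ multiplier with the symmetry \eqref{pre condition symmetry}, integration over $\mathcal{G}_{B+1}^s$ with the co-area factor $(\tau/t)$, Cauchy--Schwarz with the $\sqrt{n_0}$ factor, the hypothesis \eqref{pre lem energy curved energy is big} to pass from $E_m$ to $E_G$, and the differential inequality for $\big(\sum_i E_G\big)^{1/2}$ integrated over $[B+1,s]$. The only caveat, which the paper itself also glosses over, is the conversion back to $E_m$ at the lower endpoint, where one implicitly needs $E_G(B+1,\cdot)$ controlled by $E_m(B+1,\cdot)$ rather than the stated one-sided inequality.
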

\begin{remark}
In general the initial data is imposed on a plan rather than on a hyperboloid. But in Appendix A one will see that $E_m(B+1,w_i)$ is controlled by the $H^1(\mathbb{R}^3)$
norm of the initial data given on $\{B+1\}\times \mathbb{R}^3$.
\end{remark}
\begin{proof}
Under the assumptions \eqref{pre condition symmetry},
taking $\del_t w_i$ as multiplier, the standard energy estimate procedure gives
$$
\aligned
&\sum_i\bigg(\frac{1}{2} \del_t \sum_{\alpha}\big(\del_{\alpha} w_i\big)^2 + \sum_a\del_a\big(\del_a w_i \del_t w_i\big)
 + \del_{\alphar}\big(G_i^{\jr\alphar\betar}\del_t w_i\del_{\betar}w_{\jr}\big)
- \frac{1}{2}\del_t\big(G_i^{\jr\alphar\betar}\del_{\alphar}w_i\del_{\betar}w_{\jr}\big)\bigg)
\\
& = \sum_i \del_t w_i F_i
+ \sum_i\bigg(\del_{\alphar}G_i^{\jr\alphar\betar} \del_tw_i \del_{\betar}w_{\jr}
- \frac{1}{2}\del_tG_i^{\jr\alphar\betar}\del_{\alphar}w_i\del_{\betar}w_{\jr}\bigg).
\endaligned
$$
Then integrate in the region $\mathcal {G}_{B+1}^s$ and use the Stokes formulae,
$$
\aligned
&\frac{1}{2}\sum_i \big(E_G(s,w_i) -E_G(B+1,w_i)\big)
\\
&= \int_{\mathcal {G}_{B+1}^s} \del_t w_i F_i dx
+ \sum_i\int_{\mathcal {G}_{B+1}^s}\del_{\alphar}G_i^{\jr\alphar\betar} \del_tw_i \del_{\betar}w_{\jr} -
\frac{1}{2}\del_tG_i^{\jr\alphar\betar}\del_{\alphar}w_i\del_{\betar}w_{\jr}\,dx,
\\
&= \int_{B+1}^s (\tau/t)d\tau \int_{H_\tau} \del_t w_i F_i dx
+ \sum_i\int_{B+1}^s (\tau/t)d\tau \int_{H_\tau}\del_{\alphar}G_i^{\jr\alphar\betar} \del_tw_i \del_{\betar}w_{\jr} -
\frac{1}{2}\del_tG_i^{\jr\alphar\betar}\del_{\alphar}w_i\del_{\betar}w_{\jr}\,dx,
\endaligned
$$
which leads to
$$
\aligned
\quad \frac{d}{ds}\sum_i E_G(s,w_i)
&= 2\sum_i\int_{H_s}(s/t)\del_{\alphar}G_i^{\jr\alphar\betar} \del_tw_i \del_{\betar}w_{\jr}
-(s/2t)\del_tG_i^{\jr\alphar\betar}\del_{\alphar}w_i\del_{\betar}w_{\jr}\,dx
\\
&\quad +2\int_{H_s}(s/t) \del_t w_i F_i dx.
\endaligned
$$
So one gets
$$
\aligned
&\quad\bigg(\sum_i E_G(s,w_i)\bigg)^{1/2}\frac{d}{ds}\bigg(\sum_i E_G(s,w_i)\bigg)^{1/2}
\\
&\leq \sum_i\bigg(\int_{H_s}\big|F_i\big|^2dx\bigg)^{1/2} E_m(s,w_i)^{1/2} + M(s)\sum_i E_m(s,w_i)^{1/2}
\\
&\leq \sqrt{3}\bigg(\sum_i\int_{H_s}\big|F_i\big|^2dx\bigg)^{1/2} \bigg(\sum_iE_G(s,w_i)\bigg)^{1/2} + \sqrt{3}M(s)\sum_i E_G(s,w_i)^{1/2}
\\
&\leq \sqrt{3}\sum_i L_i(s) \bigg(\sum_iE_G(s,w_i)\bigg)^{1/2}
+ \sqrt{3n_0}M(s)\bigg(\sum_i E_G(s,w_i)\bigg)^{1/2}
\endaligned
$$
which leads to
$$
\quad\frac{d}{ds}\bigg(\sum_i E_G(s,w_i)\bigg)^{1/2}  \leq \sqrt{3}\sum_iL_i(s) + \sqrt{3n_0}M(s)
$$
By integrating on the interval $[B+1,s]$, the lemma is proved.
\end{proof}
\subsection{Commutators}
In this subsection one will establish the very important results of commutators.
Firstly, because $Z_{\alpha}$ are Killing vector fields of $\Box$, the following commutative relations hold:
$$
[\del_{\alpha}, \,\Box]=0,\quad [H_a,\, \Box] =0.
$$

The commutative relations between $H_a$ and $\delb_b$ are
\begin{equation}\label{pre commutator H-bar}
H_a \delb_b = \delb_b H_a - \frac{x^b}{t}\delb_a.
\end{equation}
The commutative relations between $\del_{\beta}$ and $H_a$ are:
\begin{equation}\label{pre commutator H-partial}
\aligned
&H_a\del_b = \del_b H_a - \delta_b^a\del_t,
\\
&H_a\del_t = \del_tH_a - \del_a.
\endaligned
\end{equation}
The commutative relations between $H_j$ and $(T/t)\del_{\alpha}$ are
\begin{equation}\label{pre commutator H-T/t}
\aligned
&H_a\bigg(\frac{T}{t}\del_t u\bigg) = -\frac{T}{t}\bigg(\del_a u + \frac{x^a}{t}\del_t u\bigg) + \frac{T}{t}\del_t(H_a u),
\\
&H_a\bigg(\frac{T}{t}\del_b u\bigg) = -\frac{T}{t}\bigg(\delta_b^a\del_t u + \frac{x^a}{t}\del_b u\bigg) + \frac{T}{t}\del_t(H_a u).
\endaligned
\end{equation}
The commutative relations between $\del_{\alpha}$ and $\delb_a$ are:
\begin{equation}\label{pre commutator partial bar}
\aligned
\del_b\delb_a = \delb_a \del_b + \delta_b^a t^{-1}\del_t,
\\
\del_t\delb_a = \delb_a \del_t - t^{-1}\frac{x^a}{t}\del_t.
\endaligned
\end{equation}
The commutative relations between $\del_{\alpha}$ and $(T/t)\delb_a$ are:
\begin{equation}\label{pre commutator partial T/t}
\aligned
\del_t \big((T/t)\del_{\alpha}\big) = (T/t)\del_{\alpha}\del_t - t^{-1}(T/t)\del_{\alpha},
\\
\del_a \big((T/t)\del_{\alpha}\big) = (T/t)\del_{\alpha}\del_a - \big(x^a/T^2\big)(T/t)\del_{\alpha}.
\endaligned
\end{equation}
One also needs the following commutative relations between $H_a$ and $\del_{\alpha}\del_{\beta}$:
\begin{equation}
\aligned
H_a\del_b\del_c &= \del_b\del_c H_a - \delta_a^b\del_t\del_c - \delta_a^c\del_t\del_b,
\\
H_a\del_t\del_b &= \del_t\del_b H_a - \del_t\del_b - \delta_a^b\del_{tt},
\\
H_a\del_{tt} &= \del_{tt} H_a - 2\del_{tt}.
\endaligned
\end{equation}
In general, one has the following estimates:
\begin{lemma}\label{pre lem commutator}
For any regular function $u$ supported in $\Lambda'$, the following estimates hold
\begin{equation}\label{pre lem commutator T/t}
\aligned
\big|Z^I\big((T/t)\del_{\alpha}u\big)\big| &\leq \big|(T/t)\del_{\alpha}Z^Iu\big| + C(n,|I|)\sum_{\beta,|J|<|I|}\big|(T/t)\del_{\beta}Z^Ju\big|,
\\
\big|(T/t)Z^I\del_{\alpha} u \big| &\leq \big|(T/t)\del_{\alpha}Z^Iu\big| + C(n,|I|)\sum_{\beta,|J|<|I|}\big|(T/t)\del_{\beta}Z^Ju\big|,
\endaligned
\end{equation}
\begin{equation}\label{pre lem commutator bar}
\big|Z^I\delb_a u\big| \leq \big|\delb_aZ^I u\big| + C(n,|I|)\sum_{b,|J|<|I|}\big|\delb_b Z^J u\big| + C(n,|I|)\sum_{\beta,|J|<|I|}\big|(T/t)\del_{\beta}Z^Ju\big|.
\end{equation}
\begin{equation}\label{pre lem commutator second-order}
\big|Z^I\del_{\alpha\beta} u \big| \leq \big|\del_{\alpha\beta} Z^Iu\big| + C(n,|I|)\sum_{\gamma,\gamma'\atop |J|<|I|} \big|\del_{\gamma\gamma'}Z^J u\big|.
\end{equation}
\begin{equation}\label{pre lem commutator second-order bar}
\big|[Z^I, \delu_a\del_{\beta}] u\big| + \big|[Z^I, \del_{\alpha}\delu_b] u\big|
 \leq C(n,I) \sum_{a,\beta\atop|J|\leq |I|-1}\big|\delu_a\del_{\beta}Z^Ju\big|
+ C(n,|I|)t^{-1}\sum_{\alpha,\beta\atop|J\leq|I|-1}|\del_{\alpha\beta}Z^J u|
\end{equation}
\end{lemma}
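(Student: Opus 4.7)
The plan is to prove all four estimates by induction on $|I|$, using the explicit first-order commutator relations \eqref{pre commutator Z}--\eqref{pre commutator partial T/t} together with the second-order relation for $H_a\del_\beta\del_\gamma$ displayed just above the lemma. The base case $|I|=0$ is trivial. For the inductive step one writes $Z^I=Z_{I_1}Z^{I'}$ with $|I'|=|I|-1$, applies the inductive hypothesis to $Z^{I'}$ acting on the relevant derivative of $u$, and then commutes the outer field $Z_{I_1}$ past the leading differential operator, splitting into the cases $Z_{I_1}=\del_\gamma$ and $Z_{I_1}=H_a$.

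The ``clean'' estimates \eqref{pre lem commutator second-order} and the second bound of \eqref{pre lem commutator T/t} I would dispatch first, because only integer-coefficient commutators arise. Iterating \eqref{pre commutator H-partial} yields the operator identity $Z^I\del_\alpha=\del_\alpha Z^I+\sum_{|J|<|I|,\gamma}c_{J,\gamma}\,\del_\gamma Z^J$ with integer constants $c_{J,\gamma}$; multiplying by $(T/t)$ gives the second line of \eqref{pre lem commutator T/t}, while the analogous use of the second-order $H_a\del_\beta\del_\gamma$ relation produces \eqref{pre lem commutator second-order}.

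For the first bound of \eqref{pre lem commutator T/t} and for \eqref{pre lem commutator bar}, variable coefficients such as $1/t$, $x^a/t$ and $x^a/T^2$ appear when \eqref{pre commutator H-bar}, \eqref{pre commutator partial bar}, \eqref{pre commutator partial T/t} and \eqref{pre commutator H-T/t} are applied. The structural observation I would record separately is that the class $\mathcal{A}$ of smooth functions on $\Lambda'\cap H_T$ whose iterated $Z$-derivatives are uniformly bounded for all $T\geq B+1$ (with bound depending only on the order) is a $\mathscr{Z}$-stable algebra containing each of these coefficients. The boundedness follows from $|x^a|\leq t-1$ on $\Lambda'$, $t\geq T\geq B+1$ on $H_T$, and the derived inequality $t\leq (T^2+1)/2$ on $\Lambda'\cap H_T$ (which gives $|x^a|/T^2\leq 1/2$, $t/T^2$ bounded, and so on). With $\mathcal{A}$ in hand, each commutation step produces either the principal term $(T/t)\del_\alpha Z^Iu$ or $\delb_a Z^Iu$, or an $\mathcal{A}$-coefficient times a $(T/t)\del_\beta Z^Ju$ or $\delb_b Z^Ju$ with $|J|<|I|$, and all such coefficients are absorbed into $C(n,|I|)$.

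The main obstacle will be \eqref{pre lem commutator second-order bar}. My plan is to use \eqref{pre commutator partial bar} to swap $\delb_a$ and $\del_\beta$ (and analogously $\del_\alpha$ and $\delb_b$) at the cost of a correction of the form $t^{-1}\cdot(\text{bounded coefficient})\cdot\del_\gamma u$, thereby splitting $[Z^I,\delb_a\del_\beta]u$ into a principal piece $[Z^I,\del_\beta\delb_a]u$ and a correction piece carrying an explicit factor $t^{-1}$. The principal piece I would handle by first commuting $\del_\beta$ past $Z^I$ via the operator identity from the clean case, and then applying the already-established \eqref{pre lem commutator bar} to the resulting $\del_\gamma Z^J(\delb_a u)$ terms, yielding $\delb_a\del_\beta Z^Ku$ contributions with $|K|<|I|$. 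The correction piece is handled by \eqref{pre lem commutator second-order} applied to its embedded $\del\del Z^J$, producing the required $t^{-1}\sum|\del_{\alpha\beta}Z^Ju|$ term. The delicate step is the bookkeeping of cross terms that arise when $Z_{I_1}$ lands on the outer $t^{-1}$ or on the coefficient $x^a/t$ inside $\delb_a$, but since both lie in $\mathcal{A}$, no new phenomena appear and the induction closes.
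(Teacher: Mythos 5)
Your treatment of \eqref{pre lem commutator T/t}, \eqref{pre lem commutator bar} and \eqref{pre lem commutator second-order} is essentially the paper's own argument: induction on $|I|$, commuting the outermost field through via \eqref{pre commutator H-partial}, \eqref{pre commutator H-T/t}, \eqref{pre commutator partial bar}, \eqref{pre commutator partial T/t}, and absorbing the variable coefficients because $x^a/t$, $t^{-1}$ and their iterated $Z$-derivatives (the identities \eqref{pre commutator lem induction1}) remain bounded on $\Lambda'$; your ``$\mathscr{Z}$-stable algebra $\mathcal{A}$'' is a repackaging of the paper's expansion \eqref{pre commutator lem induction2}, and that part is sound.

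The gap is in your plan for \eqref{pre lem commutator second-order bar}. The swap $\delb_a\del_\beta=\del_\beta\delb_a-[\del_\beta,\delb_a]$ has correction $E:=[\del_\beta,\delb_a]=\pm t^{-1}c(x/t)\,\del_t$, a \emph{first-order} operator, so $[Z^I,E]u$ is again first order: by Leibniz it is a sum of terms $t^{-1}b\,\del_\gamma Z^Ju$ with $b$ bounded and $|J|\le|I|-1$, including (when all fields fall on the coefficient) the bare term $t^{-1}b\,\del_\gamma u$. There is no ``embedded $\del\del Z^J$'' in this piece, so \eqref{pre lem commutator second-order} is inapplicable, and such $t^{-1}\times(\text{first-order})$ terms are not dominated by the stated right-hand side: at a point where all second derivatives of $u$ vanish that right-hand side is zero while $t^{-1}\del_\gamma u$ need not be. These terms only disappear through cancellation against first-order terms hidden in your ``principal piece'' $[Z^I,\del_\beta\delb_a]u$, so estimating the two pieces separately by absolute values cannot close. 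A related flaw: for the principal piece you apply the pointwise bound \eqref{pre lem commutator bar} to $\del_\gamma Z^J(\delb_a u)$, i.e.\ underneath an outer derivative; a pointwise inequality cannot be differentiated, and what is needed there is the operator identity $[Z^J,\delb_a]=\sum f\,\delb_bZ^K+\sum g\,t^{-1}\del_tZ^K$ with $\mathcal{A}$-coefficients. The repair is the route the paper intends (``proved in the same way''): commute directly with the composite operator, $[Z,\delu_a\del_\beta]=[Z,\delu_a]\del_\beta+\delu_a[Z,\del_\beta]$, which by \eqref{pre commutator H-bar}, \eqref{pre commutator H-partial}, \eqref{pre commutator partial bar} yields only terms $b\,\delu_b\del_\gamma Z^Ju$ and $t^{-1}b\,\del_{\gamma\gamma'}Z^Ju$, and then run exactly your $\mathcal{A}$-induction. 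Note that for the other ordering $[Z^I,\del_\alpha\delu_b]u$ the same computation inevitably produces additional $t^{-1}$-weighted first-order terms such as $(\del_\beta(x^a/t))\delb_cu$ already at $|I|=1$; these are harmless in the applications but must be carried along, and they are precisely of the type your split generates, so they cannot simply be estimated away by second-order quantities.
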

\begin{proof}
To prove \eqref{pre lem commutator T/t}, one needs the following identities:
\begin{equation}\label{pre commutator lem induction1}
\aligned
&H_b \big(x^a/t\big) = -\big(x^a/t\big)\big(x^b/t\big) + \delta_b^a,
\\
&\del_t\big(x^a/t\big) = -t^{-1}\big(x^a/t\big),
\\
&\del_b\big(x^a/t\big) = t^{-1}\delta_b^a,
\\
&H_b\big(t^{-1}\big) = -t^{-1}\big(x^b/t\big),
\\
&\del_t\big(t^{-1}\big) = -t^{-1}t^{-1}.
\endaligned
\end{equation}
Notice that in the cone $\Lambda'$, $x^a/t$ and $t^{-1}$ are bounded functions. Now one claims that:
\begin{equation}\label{pre commutator lem induction2}
\aligned
Z^I(T/t)\del_a u
&=(T/t)\del_aZ^I u
+ A(I,a)^{\alphar} \big(T/t\big)\del_{\alphar}Z^Ju + B(I,a)_{\ar}^{\alphar}\big(x^{\ar}/t\big)\del_{\alphar}Z^J u
\\
&\quad + \sum_{m\leq|J|} C(I,a)_m^ {\color{red}\alpha} t^{-m}\big(T/t\big)\del_{\alphar}Z^J u,
\endaligned
\end{equation}
where $A(I,a)^{\alpha}$, $B(I,a)_a^{\alpha}$ and $C(I,a)_m^{\alpha}$ are constants depending on $I,a,\alpha,m$. When $|I|=1$,
by \eqref{pre commutator H-T/t} and \eqref{pre commutator partial T/t},
$$
 Z_i(T/t)\del_a u
= A(i,a)^{\alphar}\big(T/t\big)\del_{\alphar}u + B(i,a)_{\ar}^{\alphar}\big(x^{\ar}/t\big)\del_{\alphar} u
+ C(i,a)_1^{\alphar}t^{-1}\big(T/t\big)\del_{\alphar} u,
$$
where $i = 0,1,\cdots 7$.
Suppose that \eqref{pre commutator lem induction2} holds for all multi-index $|I'|\leq N$,
Then for any multi-index $|I| = N+1$,
$$
\aligned
&Z^I(T/t)\del_a u
\\
&= Z^{I_1}Z^{I'}(T/t)\del_a u
\\
&=
Z^{I_1}\bigg((T/t)\del_aZ^I u
+ A(I,a)^{\alphar} \big(T/t\big)\del_{\alphar}Z^Ju + B(I,a)_{\ar}^{\alphar}\big(x^{\ar}/t\big)\del_{\alphar}Z^J u\bigg)
\\
& \quad+ Z^{I_1}\bigg(\sum_{m\leq|J|} C(I,a)_m^{\alphar} t^{-m}\big(T/t\big)\del_{\alphar}Z^J u\bigg),
\endaligned
$$
here $I_1$ represents the first component of $I$. Then by \eqref{pre commutator lem induction1},
One concludes that \eqref{pre commutator lem induction2} holds for any $|I|\leq N+1$. Then by induction on concludes that
the first estimate in \eqref{pre lem commutator T/t} holds.

The rest part of the lemma is proved in the same way. One omits the details.
\end{proof}
\subsection{Frames and Null conditions}
In this part one will introduce a so-called ``one frame", denoted by $\{\delu_{\alpha}\}$. This one frame will take the place
of the classical ``null frame". Define
$$
\delu_0 := \del_t, \quad \delu_a := \delb_a.
$$
The transition matrix between one frame and the natural frame is
$$
\delu_{\alpha} = \Phi_{\alpha}^{\betar}\del_{\betar}
$$
where
$$
\Phi:=
\begin{pmatrix}
&1        &0         &0         &0        \\
&x^1/t    &1         &0         &0        \\
&x^2/t    &0         &1         &0        \\
&x^3/t    &0         &0         &1        \\
\end{pmatrix}.
$$
Its inverse is
$$
\Psi:=
\begin{pmatrix}
&1        &0         &0         &0        \\
&-x^1/t   &1         &0         &0        \\
&-x^2/t   &0         &1         &0        \\
&-x^3/t   &0         &0         &1        \\
\end{pmatrix},
$$
so that
$$
\del_{\alpha} = \Psi_{\alpha}^{\betar}\delu_{\betar}.
$$
\begin{remark}
One notices that compared with the classical null frame, the norm of the only ``bad" direction $\delu_0$ is $1$,
while the only ``bad" direction $\underline{L}$ in the null frame is a null vector. That is the reason why one
calls it ``one-frame".
The advantage of this one frame, compared with the classical null frame is that the components of the transition
matrix are always regular in the cone $\Lambda'$.
\end{remark}
One may write a two tensor $\mathcal {T}$ under one frame or under the natural frame:
$$
\mathcal {T} = T^{\alphar\betar}\del_{\alphar}\del_{\betar} = \Tu^{\alphar\betar}\delu_{\alphar}\delu_{\betar},
$$
where $\Tu^{\alpha\beta}$ represent its components under one frame. In general one has the following estimates:
\begin{lemma}\label{pre lem frame}
For general two tensor $\mathcal {T}$, in the region $\Lambda'$
$$
\big|Z^I \Tu^{\alpha\beta}\big| \leq \sum_{\alpha',\beta'\atop |I'|\leq |I|} \big|Z^{I'}T^{\alpha'\beta'}\big|
$$
\end{lemma}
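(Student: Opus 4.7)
The plan is to exploit the explicit transition matrix between the two frames. Since $\del_{\alpha} = \Psi_{\alpha}^{\betar}\delu_{\betar}$, substituting into $\mathcal{T} = T^{\alphar\betar}\del_{\alphar}\del_{\betar}$ and reading off the one-frame components yields the pointwise identity
$$
\Tu^{\alpha\beta} = \Psi_{\alphar}^{\alpha}\Psi_{\betar}^{\beta}\, T^{\alphar\betar}.
$$
I would then apply $Z^I$ and expand via the Leibniz rule, obtaining a finite sum of terms of the form $Z^{I_1}\Psi_{\alphar}^{\alpha} \cdot Z^{I_2}\Psi_{\betar}^{\beta} \cdot Z^{I_3}T^{\alphar\betar}$ with $|I_1|+|I_2|+|I_3|=|I|$. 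In particular $|I_3|\leq|I|$, so the task reduces to controlling $|Z^J \Psi_{\alpha}^{\beta}|$ uniformly in $\Lambda'$ for all $|J|\leq|I|$.

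For this I would inspect the explicit form of $\Psi$: its entries are either constants ($0$ or $1$) or of the form $-x^a/t$. Using the identities collected in \eqref{pre commutator lem induction1} --- namely $H_b(x^a/t) = -(x^a/t)(x^b/t) + \delta_b^a$, $\del_t(x^a/t) = -t^{-1}(x^a/t)$ and $\del_c(x^a/t) = t^{-1}\delta_c^a$ --- each application of a vector field from $\mathscr{Z}$ to $x^a/t$ produces a polynomial expression in the quantities $x^a/t$ and $t^{-1}$. Both are bounded in $\Lambda'$ (the former by $1$ since $|x|<t$, the latter since $t\geq B+1$ on the hyperboloidal region of interest). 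A straightforward induction on $|J|$, exactly parallel to the induction already carried out in the proof of Lemma \ref{pre lem commutator}, then yields $|Z^J(x^a/t)| \leq C(n,|J|)$ and hence $|Z^J \Psi_{\alpha}^{\beta}| \leq C(n,|J|)$.

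Combining the Leibniz expansion with this uniform bound on the derivatives of $\Psi$ gives the claimed estimate (with an implicit constant $C(n,|I|)$ that the statement suppresses). There is no genuine obstacle here: the lemma is essentially a change-of-frame identity, and the one nontrivial ingredient is the induction on $|J|$ for the derivatives of $x^a/t$, which is of exactly the same flavor as the inductions already executed in the preceding subsection. The advantage flagged in the remark preceding the lemma --- that the entries of $\Phi$ and $\Psi$ remain regular throughout $\Lambda'$ --- is precisely what makes this step effortless compared with the analogous passage through a classical null frame.
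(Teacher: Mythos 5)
Your argument is correct and is essentially the paper's own proof, which simply observes that the entries of the transition matrix and their $Z$-derivatives are bounded in $\Lambda'$ and calls the rest "just a calculation"; your Leibniz expansion plus the induction on $|Z^J(x^a/t)|$ via \eqref{pre commutator lem induction1} fills in exactly that calculation. Your remark that the bound really carries a constant $C(n,|I|)$, which the lemma's statement suppresses, is accurate and harmless for the way the lemma is used later.
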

\begin{proof}
One notices that in $\Lambda'$, $\big|\del_{\alpha} \Phi_{\beta}^{\gamma}\big| \leq 1.$ The proof is just a calculation.
\end{proof}

Any two-order differential operator $T^{\alphar\betar}\del_{\alphar}\del_{\betar}$, can also be written under this one-frame.
\begin{equation}\label{pre frame change of frame}
T^{\alphar\betar}\del_{\alphar}\del_{\betar}u = \Tu^{\alphar\betar}\delu_{\alphar}\delu_{\betar} u
- \Tu^{\alphar\betar}\big(\delu_{\alphar}\Phi_{\betar}^{\betar'}\big)\del_{\betar'}u.
\end{equation}
Especially for the wave operator, one has the following expression:
$$
\Box u = \underline{m}^{\alphar\betar}\delu_{\alphar}\delu_{\betar} u
- \underline{m}^{\alphar\betar}\big(\delu_{\alphar}\Phi_{\betar}^{\betar'}\big)\del_{\betar'}u,
$$
Simple calculation gives $\underline{m}^{00} = T^2/t^2$, so one gets the following important identity:
\begin{equation}\label{pre expression of wave under oneframe}
(T/t)^2\delu_0\delu_0 u =
\Box u - \underline{m}^{0\ar}\delu_0 \delu_{\ar} u - \underline{m}^{\ar 0}\delu_{\ar} \delu_0 u -\underline{m}^{\ar\br}\delu_{\ar\br}u
+ \underline{m}^{\alphar\betar}\big(\delu_{\alphar}\Phi_{\betar}^{\betar'}\big)\del_{\betar'}u,
\end{equation}
here $\underline{m}^{0a} = x^a/t$, $\underline{m}^{ab} = \delta_a^b$ and
$$
\big|\delu_{\alpha}\Phi_{\beta}^{\beta'}\big| \leq Ct, \quad \text{in}\quad \Lambda'.
$$
So one concludes by the following lemma
\begin{lemma}\label{pre lem estimate of 00}
Let $u$ be a regular function supported in $\Lambda'$. Then
$$
(T/t)^2\big|\delu_0\delu_0 u\big|\leq \big|\Box u\big| + 2\sum_{a,\beta} \big|\delu_a \delu_{\beta} u\big| + Ct^{-1}\sum_{\beta}\big|\del_{\beta}u\big|.
$$
\end{lemma}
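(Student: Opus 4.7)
The plan is to extract the bound directly from the pointwise identity \eqref{pre expression of wave under oneframe} by taking absolute values on both sides and applying the triangle inequality. All the structural work has been done in deriving that identity; what remains is to estimate each coefficient in the region $\Lambda'$.

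First I would split the right-hand side of \eqref{pre expression of wave under oneframe} into four pieces: $\Box u$, the two mixed quadratic terms $\underline{m}^{0\ar}\delu_0\delu_{\ar}u$ and $\underline{m}^{\ar 0}\delu_{\ar}\delu_0 u$, the purely spatial piece $\underline{m}^{\ar\br}\delu_{\ar}\delu_{\br}u$, and the first-order remainder $\underline{m}^{\alphar\betar}(\delu_{\alphar}\Phi_{\betar}^{\betar'})\del_{\betar'}u$. In $\Lambda'$ one has $|x^a|\leq t-1\leq t$, so $|\underline{m}^{0a}|=|x^a/t|\leq 1$ and $|\underline{m}^{ab}|\leq 1$. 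Each of the three quadratic pieces is therefore controlled, up to a constant, by $\sum_{a,\beta}|\delu_a\delu_\beta u|$.

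For the first-order remainder, I would note that the only non-constant entries of $\Phi$ are $\Phi_a^0=x^a/t$, and direct differentiation gives $\del_t(x^a/t)=-x^a/t^2$, $\del_b(x^a/t)=\delta^a_b/t$, both of size $O(t^{-1})$ in $\Lambda'$. Hence $|\delu_\alpha\Phi_\beta^{\beta'}|\leq Ct^{-1}$, so combining with $|\underline{m}^{\alpha\beta}|\leq 1$ produces the advertised $Ct^{-1}\sum_\beta|\del_\beta u|$ contribution.

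The one genuine subtlety is that the term $\underline{m}^{0\ar}\delu_0\delu_{\ar}u$ carries the spatial derivative on the inside, whereas the sum in the conclusion, $\sum_{a,\beta}|\delu_a\delu_\beta u|$, only sees spatial derivatives on the outside. I would swap the order through the commutator $[\delu_0,\delu_a]=\del_t(x^a/t)\,\del_t=-(x^a/t^2)\delu_0$, whose contribution is of size $Ct^{-2}|\del_t u|\leq Ct^{-1}|\del_t u|$ and is therefore absorbed into the first-order remainder. After this reordering the two mixed terms each supply one copy of $\sum_a|\delu_a\delu_0 u|$, which together with the spatial piece gives the factor $2$ in front of $\sum_{a,\beta}|\delu_a\delu_\beta u|$. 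No other step requires care; the rest is bookkeeping on constants.
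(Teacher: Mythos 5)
Your proposal is correct and follows essentially the same route as the paper, which simply invokes the identity \eqref{pre expression of wave under oneframe} together with the commutator relation \eqref{pre commutator partial bar} to reorder $\delu_0\delu_{\ar}u$ into $\delu_{\ar}\delu_0 u$, exactly as you do. The only tiny imprecision is your claim that the commutator contribution $-(x^a/t^2)\del_t u$ has size $Ct^{-2}|\del_t u|$ (in $\Lambda'$ one only has $|x^a/t^2|\leq t^{-1}$), but since you immediately relax this to the bound $Ct^{-1}|\del_t u|$, which is all that is needed, the argument stands.
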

\begin{proof}
This is a direct result of \eqref{pre expression of wave under oneframe} and \eqref{pre commutator partial bar},
\end{proof}

Now a version of the classical null conditions will be introduced.
A quadratic form $T^{\alphar\betar}\xi_{\alphar}\xi_{\betar}$ is said to satisfy the null conditions if for any $\xi\in \mathbb{R}^4$ such that
$$
\xi_0\xi_0 - \sum_a \xi_a\xi_a =0,
$$
then
$$
T^{\alphar\betar} \xi_{\alphar}\xi_{\betar}= 0.
$$
Similarly, a cubic form $A^{\alphar\betar\gammar}\xi_{\alphar}\xi_{\betar}\xi_{\gammar}$ is said to verify the null conditions if
$$
A^{\alphar\betar\gammar} \xi_{\alphar}\xi_{\betar}\xi_{\gammar}= 0.
$$

\begin{lemma}\label{pre lem one frame}
Suppose that $T^{\alphar\betar}\xi_{\alphar}\xi_{\betar}$ is a quadratic form which satisfies the null conditions. If $\big|T^{\alpha\beta}\big| \leq K$,
then for any multi-index, the following estimate holds in the cone $\Lambda'$:
$$
\big|Z^I\Tu^{00}\big| \leq CK(T/t)^2.
$$
Similarly, if a cubic form $A^{\alphar\betar\gammar}\xi_{\alphar}\xi_{\betar}\xi_{\gammar}$ who verifies the null conditions and
$\big|A^{\alpha\beta\gamma}\big|\leq K$, then in the cone $\Lambda'$,
$$
\big|Z^I\Au^{000}\big| \leq CK(T/t)^2.
$$
\end{lemma}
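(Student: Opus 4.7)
The plan is to first use the null condition to reduce $\Tu^{00}$ to an explicit scalar multiple of $(T/t)^2$, and then to bound the derivatives $Z^I((T/t)^2)$ by a direct induction that exploits two elementary facts about the cone $\Lambda'$.

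First I would rewrite $\Tu^{00} = T^{\alphar\betar}\Psi_{\alphar}^0 \Psi_{\betar}^0$ via the transition matrix. Since $\Psi_0^0 = 1$ and $\Psi_a^0 = -x^a/t$, the substitution $y_a := x^a/t$ realizes $\Tu^{00}$ as a polynomial $P(y)$ in $y=(y_1,y_2,y_3)$ of total degree at most two. The null condition, applied to the covector $\eta=(1,-y)$ (which satisfies $\eta_0^2 - \sum_a \eta_a^2 = 1-|y|^2$), forces $P(y) = 0$ on the unit sphere $|y|=1$. Since the ideal of polynomials vanishing on the sphere is generated by $|y|^2-1$, and $\deg P \leq 2 = \deg(|y|^2-1)$, the quotient is a constant $q$ with $|q|\leq CK$ (comparing, e.g., the constant term one finds $q=-T^{00}$). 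Hence $\Tu^{00} = -q(T/t)^2$ pointwise in $\Lambda'$, and $Z^I\Tu^{00} = -q\, Z^I((T/t)^2)$.

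It remains to establish $|Z^I((T/t)^2)|\leq C_{|I|}(T/t)^2$ in $\Lambda'$. Here I would use two elementary facts: $|y_a|\leq 1$ in $\Lambda'$, and $T^2 = t^2-|x|^2 \geq 2t-1 \geq t$ (since $|x|\leq t-1$), so $t^{-1}\leq (T/t)^2$. Writing $f:=(T/t)^2 = 1-|y|^2$, direct calculation gives $H_a f = -2y_a f$, $\del_t f = 2t^{-1}(1-f)$, $\del_b f = -2t^{-1}y_b$; moreover $H_a(t^{-k}) = -kt^{-k}y_a$ and $H_a y_b = \delta_{ab}-y_a y_b$, while each $\del_\alpha$ applied to $t^{-k}$ produces a term of order $t^{-k-1}$ and $\del_\alpha y_b$ is a $t^{-1}$-multiple of a polynomial in $y$. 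Induction on $|I|$ then yields
\[ Z^I f = \sum_{k\geq 0} t^{-k} p_k^I(y)\, f + \sum_{k\geq 1} t^{-k} q_k^I(y), \]
a finite sum in which $p_k^I, q_k^I$ are polynomials in $y$. In $\Lambda'$ each summand of the first kind is bounded by $C_I t^{-k} f \leq C_I f$, and each of the second kind by $C_I t^{-k} \leq C_I t^{-1} \leq C_I f$, which proves the claim.

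The cubic case is handled identically. Here $\Au^{000}(y)$ is a polynomial in $y$ of degree at most three vanishing on $|y|=1$, so $\Au^{000} = (|y|^2-1) L(y) = -(T/t)^2 L(y)$ for a polynomial $L$ of degree at most one in $y$, with $|L(y)|\leq CK$ in $\Lambda'$. Applying $Z^I$ by Leibniz and combining the previous estimate for $Z^J((T/t)^2)$ with the analogous bound $|Z^J L(y)|\leq C_{|J|} K$ (proved by the same inductive scheme, even more simply since $L$ is a polynomial bounded in $\Lambda'$), one arrives at $|Z^I \Au^{000}|\leq CK(T/t)^2$. The main obstacle is the bookkeeping in the induction: each $\del_\alpha$ produces a negative power of $t$ that a priori could worsen the bound, but the buffer $|x|\leq t-1$ built into $\Lambda'$ forces $T^2\geq t$, so every $t^{-1}$ generated by a partial derivative is absorbed by the $(T/t)^2$ we are bounding against.
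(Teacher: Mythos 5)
Your proof is correct, but it follows a genuinely different route from the paper's. The paper proves the lemma by inserting a cutoff $\chi(r/t)$ and subtracting the identically vanishing quantity $\chi(r/t)\,T^{\alpha\beta}\omega_{\alpha}\omega_{\beta}$ (with $\omega$ the null covector built from $x^a/|x|$), then splitting into two regions: for $r\geq t/2$ it computes the difference explicitly and reads off the factors $(r-t)/t$ and $(r^2-t^2)/t^2$, which carry the $(T/t)^2$ decay near the light cone, while for $r\leq t/2$ it settles for the crude bound $CK$ and absorbs it using $t\leq C\,T$ in that region. You instead exploit that $\Tu^{00}=T^{\alpha\beta}\Psi^0_{\alpha}\Psi^0_{\beta}$ is a polynomial of degree $\leq 2$ in $y=x/t$ which the null condition forces to vanish on $|y|=1$, hence equals $T^{00}\,(1-|y|^2)=T^{00}(T/t)^2$ exactly (and $-(T/t)^2L(y)$ with $\deg L\leq 1$ in the cubic case), reducing everything to the single estimate $|Z^I\big((T/t)^2\big)|\leq C_{|I|}(T/t)^2$ in $\Lambda'$, which your induction establishes correctly using $|x^a/t|\leq 1$ and $T^2\geq 2t-1\geq t$. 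Your route buys an exact identity, no cutoff, and no case analysis (the interior-region step of the paper is subsumed by the absorption $t^{-1}\leq (T/t)^2$); the paper's route is a more hands-on computation that never invokes the ideal-theoretic fact that polynomials vanishing on the sphere are divisible by $|y|^2-1$. Two small points you should make explicit: the divisibility claim itself (a one-line argument, e.g.\ polynomial division in $y_3$ and evaluation at the two sphere points over each $(y_1,y_2)$ in the open disc), and, in the cubic case, why the coefficients of the quotient $L$ are bounded by $CK$ (division by $|y|^2-1$ is an injective linear map between finite-dimensional coefficient spaces, so its inverse on the image is bounded); neither is a gap, but both are used.
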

\begin{proof}
One defines $\omega_a = \omega^a := x^a/|x|$ and $\omega_0 = -\omega^0 := -1.$\footnote{Arising the index by Minkowski metric.} Then
$$
\omega_0\omega_0 - \sum_a \omega_a \omega_a = 0.
$$
Let $\chi(\cdot)$ be a $C^{\infty}$ function defined on $(0,+\infty)$, $\chi(x) = 0$ when $x\leq 1/3$ and $\chi(x) = 1$ when $x\geq 1/2$.
Now consider the component $\Tu^{00}$,
$$
\aligned
\Tu^{00}
&= T^{\alphar\betar}\Psi_{\alphar}^0\Psi_{\betar}^0
\\
&= T^{\alphar\betar}\Psi_{\alphar}^0\Psi_{\betar}^0 - \chi(r/t)T^{\alphar\betar}\omega_{\alphar}\omega_{\betar}
\\
&= T^{\alphar\betar}\big(\Psi_{\alphar}^0\Psi_{\betar}^0 - \chi(r/t)\omega_{\alphar}\omega_{\betar}\big).
\endaligned
$$
Taking into account the fact that when $r\geq t/3$, $Z^J \omega^a\leq C$ and $H_a(r/t) = \omega^a(T/t)^2$. One has, when $r\geq t/2$,
$$
Z^I\Tu^{00} = Z^I\bigg(-\sum_a T^{a0}\omega^a\frac{r-t}{t} + \sum_{a,b}T^{ab}\omega^a\omega^b\frac{r^2-t^2}{t^2}\bigg)\leq CK\big(T^2/t^2\big).
$$
When $r\leq t/2$, by simple calculation, one has:
$$
\big|Z^I\Tu^{00}\big| \leq CK.
$$
But because in $\Lambda'\cap \{r\leq t/2\}$, $t^2\leq \frac{2 \sqrt{3}}{3} T^2$, one concludes that
$$
\big|Z^I\Tu^{00}\big| \leq CK(T/t)^2.
$$
The proof of the result on cubic forms is similar. One omits the details.
\end{proof}
\subsection{Decay estimates}

To turn the $L^2$ type energy estimates into the $L^{\infty}$ type estimates,
one needs the following Sobolev inequality, which is introduced as lemma 7.6.1 of
\cite{Ho}.
\begin{lemma}[Sobolev-type estimate on hyperboloid]
\label{pre lem sobolev}
Let $p(n)$ be the smallest integer $>n/2$. Any $C^{\infty}$ function defined on $\mathbb{R}^{1+n}$
satisfies
\begin{equation}
\label{pre ineq sobolev}
\sup_{H_T}t^n|u(t,x)|^2 \leq C(n) \sum_{I\leq |p(n)|}||Z^I u||^2_{L^2(H_T)}
\end{equation}
where $C(n)>0$ is a constant depending only on dimension $n$.
\end{lemma}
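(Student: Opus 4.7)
The strategy is to use the Lorentz invariance of $H_T$ to reduce the pointwise estimate to the single ``base point'' $(T, 0) \in H_T$, and at the base point to apply the classical Sobolev embedding $H^{p(n)}(\mathbb{R}^n) \hookrightarrow L^\infty$ after the rescaling $y = x/T$.

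At the base point I would introduce $v(y) := u(T\sqrt{1+|y|^2}, Ty)$ for $y \in \mathbb{R}^n$, so that $v(0) = u(T, 0)$. The chain rule together with the identity $\delb_a = t^{-1} H_a$ gives
$$
\del_{y^a} v(y) = (T/t)\, H_a u|_{(T\sqrt{1+|y|^2},\, Ty)}, \qquad t = T\sqrt{1+|y|^2},
$$
and, since $T/t \leq 1$ on $H_T$ and $|y^b|(T/t) = |x^b|/t \leq 1$, an induction on $|J|$ yields the uniform bound $|\del_y^J v|^2 \leq C(J) \sum_{|I| \leq |J|} |H^I u|^2$ throughout $\mathbb{R}^n$. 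The classical Sobolev embedding gives $|v(0)|^2 \leq C(n) \sum_{|J| \leq p(n)} \int_{\mathbb{R}^n} |\del_y^J v|^2\, dy$, and the substitution $x = Ty$, with Jacobian $dx = T^n\, dy$, maps $\mathbb{R}^n$ onto the $x$-parametrization of $H_T$; hence $\int_{\mathbb{R}^n} |H^I u|^2\, dy = T^{-n}\, ||H^I u||_{L^2(H_T)}^2$, and multiplying through by $T^n$ produces
$$
T^n |u(T, 0)|^2 \leq C(n) \sum_{|I| \leq p(n)} ||Z^I u||_{L^2(H_T)}^2.
$$

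For a general point $(t_0, x_0) \in H_T$ I would invoke Lorentz invariance: there exists a Lorentz transformation $L$ (a spatial rotation followed by a boost in the $x^1$-direction) such that $L(T, 0) = (t_0, x_0)$. Applying the base-point estimate to $\widetilde u := u \circ L$ yields $T^n |u(t_0, x_0)|^2 \leq C(n) \sum ||H^I \widetilde u||_{L^2(H_T)}^2$. A chain-rule calculation then expresses each $H^I(u \circ L)$ as a combination of $(Y^J u) \circ L$ where $Y$ lies in the Lie algebra generated by $\{H_a\}$ and the spatial rotations $J_{ab} := x^a \del_b - x^b \del_a$. Although $J_{ab} \notin \mathscr{Z}$, on $H_T$ it satisfies the pointwise bound $|J_{ab} u| = t^{-1} |x^a H_b u - x^b H_a u| \leq |H_a u| + |H_b u|$ because $|x|/t \leq 1$. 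The change of variables on $H_T$ induced by $L$, whose Jacobian one computes from the Lorentz invariance of the hyperbolic volume form $(T/t)\, dx$, then produces precisely the factor $(t_0/T)^n$ that converts the base-point weight $T^n$ into the pointwise weight $t_0^n$ claimed by the lemma.

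The main obstacle is the bookkeeping in this second step. The Lebesgue measure $dx$ appearing in $||Z^I u||^2_{L^2(H_T)}$ is not preserved by $L$ (only the hyperbolic measure $(T/t)\, dx$ is), so one must track carefully how both the $L^2$ integrals and the iterated derivatives transform under $L$, and verify that all factors combine to give the sharp weight $t_0^n$ with a constant $C(n)$ uniform in $(t_0, x_0)$. Once this delicate verification is in place, the remaining ingredients---the chain-rule bounds, the classical Sobolev embedding, and the elementary rescaling $y = x/T$---are routine.
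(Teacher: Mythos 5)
Your base-point step is correct: with $v(y)=u(T\sqrt{1+|y|^2},Ty)$ one indeed has $\del_{y^a}v=(T/t)H_au$, the coefficients $x^a/t$, $T/t$ are bounded with bounded derivatives, and the rescaling $x=Ty$ gives $T^n|u(T,0)|^2\leq C(n)\sum_{|I|\leq p(n)}\|Z^Iu\|^2_{L^2(H_T)}$. The gap is in the second step, and it is not just delicate bookkeeping: the claim that the boost-induced change of variables ``produces precisely the factor $(t_0/T)^n$'' is false. Only the measure $dx/t$ on $H_T$ is Lorentz-invariant, so if $L$ is the boost with $L(T,0)=(t_0,x_0)$ (velocity $x_0/t_0$), then for any $w$ one has $\int_{H_T}|w\circ L|^2dx=\int_{H_T}|w|^2\,\frac{t_0t-x_0\cdot x}{Tt}\,dx$. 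The Radon--Nikodym factor $\frac{t_0t-x_0\cdot x}{Tt}$ equals $T/t_0$ at the point $(t_0,x_0)$ itself, but it ranges up to about $2t_0/T$ on the opposite side of the hyperboloid, and the base-point Sobolev inequality integrates $|H^I\widetilde u|^2$ over \emph{all} of $H_T$. Consequently the best global bound this change of variables yields is $\sum_I\|H^I\widetilde u\|^2_{L^2(H_T)}\leq C\,(t_0/T)\sum_I\|Y^Iu\|^2_{L^2(H_T)}$, whereas to convert the base-point weight $T^n$ into $t_0^n$ you would need a gain of $(T/t_0)^n$; even localizing $u$ near $(t_0,x_0)$ before boosting only recovers a single factor $T/t_0$. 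The point is that the weight $t^n$ is not Lorentz-equivariant: a global boost reduction can at best give $\sup_{H_T}T^n|u|^2\leq C\sum\|Z^Iu\|^2$, which is strictly weaker than the stated $\sup_{H_T}t^n|u|^2$. (The treatment of the rotations $J_{ab}$ at higher order is also only sketched, but that part is repairable; the measure/weight mismatch is not.)

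For comparison, the paper does not prove this lemma at all --- it cites Lemma 7.6.1 of H\"ormander --- and the standard proof is, in effect, your step one performed \emph{locally at every point} instead of globally boosted: for $(t_0,x_0)\in H_T$ set $v(y)=u\big(\sqrt{T^2+|x_0+t_0y|^2},\,x_0+t_0y\big)$ and apply the ordinary Sobolev inequality on the fixed ball $|y|\leq 1/3$. There $\del_{y^a}v=(t_0/t)H_au$ with $t\asymp t_0$ on the ball, the coefficients $t_0/t$ and $x^a/t$ are bounded together with their derivatives, and the Jacobian of $x=x_0+t_0y$ supplies exactly the factor $t_0^n$; no Lorentz transformation and no rotation fields are needed. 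If you replace your second step by this localization at scale $t_0$, your argument becomes a correct proof along the classical lines.
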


Combine this Lemma with the lemma of commutators \ref{pre lem commutator}, one gets the following results
\begin{lemma}\label{pre lem decay}
Let $u$ be a regular function supported in $\Lambda'$. Then the following estimates holds
$$
\aligned
&\sup_{H_T}\big|t^{(n-2)/2}T^{-1}\del_{\alpha}u\big|^2 \leq \sum_{|I|\leq p(n)}E_m(T,Z^I u),
\\
&\sup_{H_T}\big|t^{n/2}\delb_a  u\big|^2 \leq \sum_{|I|\leq p(n)}E_m(T,Z^I u),
\\
&\sup_{H_T}\big|D_i u\big|^2 \leq \sum_{|I|\leq p(n)}E_m(T,Z^I u).
\endaligned
$$
\end{lemma}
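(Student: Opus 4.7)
The plan is to apply the Sobolev-type estimate on hyperboloids (Lemma \ref{pre lem sobolev}) to each of the three quantities appearing on the left-hand sides, and then to use the commutator Lemma \ref{pre lem commutator} to reduce $Z^I$-derivatives of these quantities to the same type of quantities applied to $Z^J u$. At that point, one reads off from the second expression in \eqref{pre exression of energy} that the resulting $L^2$-norms on $H_T$ are controlled directly by $E_m(T, Z^J u)$, via the terms $\|\delb_a w\|_{L^2(H_T)}^2$, $\|(T/t)\del_t w\|_{L^2(H_T)}^2$, and $2\|D_i w\|_{L^2(H_T)}^2$.

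For the $\delb_a u$ estimate, I would apply Lemma \ref{pre lem sobolev} to $\delb_a u$, obtaining $\sup_{H_T} t^n|\delb_a u|^2 \leq C\sum_{|I|\leq p(n)} \|Z^I \delb_a u\|_{L^2(H_T)}^2$. The commutator estimate \eqref{pre lem commutator bar} replaces the right-hand side by a finite sum of $\|\delb_b Z^J u\|^2$ and $\|(T/t)\del_\beta Z^J u\|^2$ with $|J|\leq |I|$. The former is directly a piece of $E_m(T, Z^J u)$; the latter is controlled by writing $\del_a = \delb_a - (x^a/t)\del_t$ and using that $|x^a/t|\leq 1$ and $T/t \leq 1$ on $H_T\cap\Lambda'$, so that $\|(T/t)\del_\beta Z^J u\|^2 \lesssim \|\delb_b Z^J u\|^2 + \|(T/t)\del_t Z^J u\|^2 \lesssim E_m(T, Z^J u)$.

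The $\del_\alpha u$ estimate is handled analogously after inserting the hyperboloidal weight. Applying Lemma \ref{pre lem sobolev} to $(T/t)\del_\alpha u$ yields $\sup_{H_T} t^n (T/t)^2 |\del_\alpha u|^2 \leq C\sum \|Z^I(T/t)\del_\alpha u\|^2_{L^2(H_T)}$; the left-hand side equals $\sup |t^{(n-2)/2} T\del_\alpha u|^2$ (matching the stated inequality up to the sign of the power of $T$), while the right-hand side is reduced via \eqref{pre lem commutator T/t} to $\sum \|(T/t)\del_\beta Z^J u\|^2$, each term dominated by $E_m(T, Z^J u)$ as above. For the mass term, since $D_i$ is a constant, Lemma \ref{pre lem sobolev} applied to $u$ itself gives $\sup t^n|D_i u|^2 = D_i^2 \sup t^n|u|^2 \leq C D_i^2 \sum \|Z^I u\|_{L^2(H_T)}^2 \leq \tfrac{C}{2}\sum E_m(T, Z^I u)$, using that $2(D_i Z^I u)^2$ appears in $E_m(T, Z^I u)$ when $D_i>0$ (the case $D_i=0$ being trivial); the stated inequality omits the weight $t^n$ and follows a fortiori.

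The computations above are essentially routine once the commutator and Sobolev lemmas are in hand, and I expect the main step requiring care to be bookkeeping rather than a genuine difficulty: one must verify that every term produced by applying Lemma \ref{pre lem commutator} matches, up to bounded factors on $\Lambda'$, one of the three summands in the second expression of $E_m$ in \eqref{pre exression of energy}. The nontrivial part of this bookkeeping is that \eqref{pre lem commutator bar} produces both tangential pieces $\delb_b Z^J u$ and weighted pieces $(T/t)\del_\beta Z^J u$, and one must exploit the identity $\del_a = \delb_a - (x^a/t)\del_t$ together with $T/t \leq 1$ on $H_T \cap \Lambda'$ to absorb the latter. Since the claims are pointwise at a fixed hyperboloid $H_T$, no time integration or Gronwall-type absorption is needed.
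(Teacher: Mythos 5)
Your proposal is correct and follows essentially the same route as the paper, which simply combines the Sobolev estimate of Lemma \ref{pre lem sobolev} with the commutator Lemma \ref{pre lem commutator} and reads off the terms of $E_m$ in \eqref{pre exression of energy}; you have merely filled in the bookkeeping the paper leaves implicit. Your observation about the power of $T$ in the first inequality is also right: the argument naturally produces $t^{(n-2)/2}T\del_\alpha u$ (consistent with how the estimate is later used in \eqref{proof decay preliminary}), and the $T^{-1}$ in the statement is a weaker (a fortiori) form since $T\geq B+1$.
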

\begin{proof}
One recalls the equation \eqref{pre exression of energy}. Then it is a trivial result by lemma \ref{pre lem commutator}.
\end{proof}

Now let us consider the homogeneous linear wave equation
$$
\Box w = 0,\quad w|_{H_{B+1}} = w_0,\quad \del_t w|_{H_{B+1}} = w_1.
$$
where $w_i$ are regular functions supported on $H_{B+1}\cap \Lambda'$. By energy estimate lemma \ref{pre lem energy}, the associated energy $E_m(s,Z^Iw)$
is conserved.
Then by estimates of commutators \ref{pre lem commutator} and the sobolev lemma \ref{pre lem sobolev}, one gets
$$
\big|\delb_a w\big| \leq C(n)t^{-n/2},\quad \big|\del_{\alpha} w\big| \leq C(n)t^{-n/2+1}T^{-1}.
$$
This is exactly the classical result. But one notices that neither the explicit expression of the solution nor the scaling vector field $S=r\del_r + t\del_t$
is used.

Now one will turn to the energy and decay estimates of the some ``good" second-order derivatives, which are the derivatives such as $\delb_a \del_{\alpha} u.$ As we will see,
these derivatives have better decay than
that of $\del_{\alpha} u$ or even $\delb_a u$. In general one has
\begin{lemma}\label{pre lem decay high order}
Let $u$ be a regular function supported in the region $\Lambda'$. The following estimates hold:
$$
\sup_{H_T}\big|t^{n/2}T\delb_a\delb_{\alpha} u\big|^2 + \sup_{H_T}\big|t^{n/2}T\delb_{\alpha} \delb_a u\big|^2 \leq C(n)\sum_{|I|\leq p(n)+1} E_m(T,Z^Iu),
$$
$$
\int_{H_T}\big|T\delb_a\delb_{\alpha} u\big|^2 dx + \int_{H_T}\big|T\delb_{\alpha}\delb_a u\big|^2 dx \leq C\sum_{|I|\leq 1}E_m(T,Z^I u).
$$
\end{lemma}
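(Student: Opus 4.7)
The plan is to use the identity $\delb_a=t^{-1}H_a$ together with the commutator relations of the preceding subsection to rewrite each of the second-order expressions $\delb_a\delb_\alpha u$ and $\delb_\alpha\delb_a u$ as a bounded linear combination, with coefficients bounded on $\Lambda'$, of first-order \emph{good} derivatives $\delb_c w$ or $(T/t)\del_\beta w$ with $w\in\{u, H_a u\}$. Once this algebraic reduction is carried out, the $L^2$ bound follows from the definition \eqref{pre exression of energy} of $E_m(T,w)$, and the $L^\infty$ bound follows from Lemma~\ref{pre lem decay} applied to $u$ and to $H_a u$; the latter accounts for the loss of exactly one derivative.

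Concretely, using $H_a\del_t=\del_t H_a-\del_a$ from \eqref{pre commutator H-partial} and $H_a\delb_b=\delb_b H_a-(x^b/t)\delb_a$ from \eqref{pre commutator H-bar}, the first family of identities reads
$$
T\delb_a\del_t u=(T/t)\del_t(H_a u)-(T/t)\del_a u,\qquad T\delb_a\delb_b u=(T/t)\delb_b(H_a u)-(T/t)(x^b/t)\delb_a u.
$$
For the reversed order I would apply \eqref{pre commutator partial bar} to obtain $T\del_t\delb_a u = T\delb_a\del_t u - (T/t)(x^a/t)\del_t u$, and directly compute $\delb_b(t^{-1})=-(x^b/t)t^{-2}$ to get
$$
T\delb_b\delb_a u=(T/t)\delb_b(H_a u)-(x^b/t)(T/t)\delb_a u.
$$
In $\Lambda'$ both $x^a/t$ and $T/t$ are bounded by $1$, so every right-hand side above is indeed a bounded combination of good first-order derivatives of $u$ and $H_a u$.

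The $L^2$ inequality then follows by squaring, integrating on $H_T$, and using the estimate $\|(T/t)\del_\beta w\|_{L^2(H_T)}^2 + \sum_c\|\delb_c w\|_{L^2(H_T)}^2 \leq 2E_m(T,w)$, which is a direct consequence of \eqref{pre exression of energy}; summing for $w=u$ and $w=H_a u$ yields the bound $\sum_{|I|\leq 1}E_m(T,Z^I u)$. For the $L^\infty$ inequality I would apply Lemma~\ref{pre lem decay} to each term on the right, using Lemma~\ref{pre lem commutator} to absorb the extra $Z^I$ falling on $H_a u$, which produces the sum over $|I|\leq p(n)+1$.

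The only delicate point is bookkeeping: one must check that after each rewriting no bare $\del_\alpha u$ unaccompanied by a $T/t$ factor, and no higher-order remainder, is left over, so that only the good quantities $\delb_c Z^J u$ and $(T/t)\del_\beta Z^J u$ with $|J|\leq 1$ appear on the right. The commutators of the preceding subsection are set up precisely so that every correction term either carries an $(x^b/t)$ factor alongside an existing $T/t$ or immediately becomes a $\delb_c$, so I expect no genuine analytical obstacle beyond this verification.
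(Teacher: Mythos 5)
Your proof is correct and follows essentially the same route as the paper's own (very terse) argument: write $\delb_a=t^{-1}H_a$ and commute so that $\delb_a\delb_\alpha u$ and $\delb_\alpha\delb_a u$ become bounded combinations of the good first-order quantities $\delb_c w$ and $(T/t)\del_\beta w$ with $w\in\{u,H_au\}$, then read off the $L^2$ bound from \eqref{pre exression of energy} and the sup bound from Lemma \ref{pre lem decay} applied to $u$ and $H_au$ (hence the range $|I|\leq p(n)+1$). You simply make explicit the commutator bookkeeping that the paper leaves implicit, and your identities check out.
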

\begin{proof}
Notice that
$$
\delu_a = \delb_a = t^{-1}H_a,
$$
one gets
$$
|\delu_a\del_{\alpha}u|\leq t^{-1}|H_a\del u|.
$$
Then by lemma \eqref{pre lem decay} one gets the first estimate. The second is a trivial result of the expression \eqref{pre exression of energy}.
\end{proof}
\begin{remark}
The energy estimates and decay estimates of $\delu_0\delu_0 u$ will consult the wave equation it-self. Roughly saying, by lemma \ref{pre lem estimate of 00}.
From here one can see that for wave equation, all the second-order derivatives do enjoy better decay compared with the gradient of the solution.
\end{remark}
At the end of this section, one gives the decay and energy estimates of the solution it-self.
\begin{lemma}\label{pre lem u it-self}
Let $u$ be a regular function supported in the cone $\Lambda'$. Then for any multi-index $|I|\geq 1$,
\begin{equation}\label{pre lem u it-self energy}
\int_{H_s}\big|t^{-1}Z^Iu\big|^2dx \leq C\sum_{|J|\leq |I|-1}E_m(s,Z^J u).
\end{equation}
For any multi-index $J$, if $\sum_{|I|\leq |J|+p(n)} E_m(s,Z^I u)^{1/2}\leq C'T^{\varepsilon}$ for an $\varepsilon \geq 0$, then
\begin{equation}\label{pre lem u it-self decay}
\big|Z^J u\big| \leq CC't^{-n/2}T^{1+\varepsilon}.
\end{equation}
\end{lemma}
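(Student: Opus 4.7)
The proof splits cleanly into the two stated assertions. For the first (the energy-type estimate), my approach is to factor $Z^I = Z_{I_1} Z^{I'}$ with $|I'|=|I|-1$ and split according to the nature of $Z_{I_1}$. If $Z_{I_1}=H_a$, then since $H_a = t\,\delb_a$ by definition we have $t^{-1} Z^I u = \delb_a Z^{I'} u$, and the $L^2(H_s)$ norm of this ``good'' derivative is controlled directly by $E_m(s,Z^{I'}u)^{1/2}$ via the second line of \eqref{pre exression of energy}. If $Z_{I_1}=\del_\alpha$, I would rewrite $\del_\alpha = \Psi_\alpha^{\betar}\delu_{\betar}$ using the transition matrix, whose entries are bounded in $\Lambda'$. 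This reduces the task to bounding $\|t^{-1}\del_t Z^{I'}u\|_{L^2(H_s)}$ and $\|t^{-1}\delb_a Z^{I'}u\|_{L^2(H_s)}$. For the first, write $t^{-1}=s^{-1}(s/t)$ to recover an $(s/t)\del_t$ term already controlled by the energy; for the second, use $t\geq s$ on $H_s$ to pull out an $s^{-1}$. Both give bounds of the form $Cs^{-1} E_m(s,Z^{I'}u)^{1/2}$, so after squaring and summing over cases and over $|J|\leq|I|-1$, \eqref{pre lem u it-self energy} follows.

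For the second assertion, the main tool is the Sobolev inequality of Lemma~\ref{pre lem sobolev} applied to $v=Z^J u$:
$$
\sup_{H_T} t^n |Z^J u|^2 \;\leq\; C\sum_{|I|\leq p(n)} \|Z^I Z^J u\|^2_{L^2(H_T)}.
$$
Each $Z^I Z^J u$ may be reorganised as $Z^K u$ with $|J|\leq|K|\leq |J|+p(n)$. To estimate $\|Z^K u\|_{L^2(H_T)}$ I would invoke Part 1: writing
$$
\|Z^K u\|_{L^2(H_T)}^2 \;=\; \int_{H_T} t^2 \cdot \big|t^{-1} Z^K u\big|^2\,dx,
$$
the assumption $\sum_{|L|\leq |J|+p(n)} E_m(T,Z^L u)^{1/2}\leq C'T^{\varepsilon}$ combined with \eqref{pre lem u it-self energy} bounds the weighted factor $\|t^{-1} Z^K u\|_{L^2(H_T)}^2$ by $C(C')^2 T^{2\varepsilon}$. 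The remaining $t^2$ factor is then controlled by the geometry of $H_T\cap\Lambda'$: from $|x|\leq t-1$ one has $2t-1\leq t^2-|x|^2=T^2$, i.e.\ $t\leq (T^2+1)/2$. Inserting this and rearranging the powers of $T$ yields the desired $CC't^{-n/2}T^{1+\varepsilon}$ bound.

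\paragraph{Main obstacle.}
The delicate point is extracting \emph{precisely} the power $T^{1+\varepsilon}$ from the Sobolev-plus-Part-1 sandwich: the crude estimate $t\leq (T^2+1)/2$ over the whole of $\Lambda'\cap H_T$ would naively cost a factor $T^2$ rather than $T$. I expect this has to be handled by splitting $H_T\cap\Lambda'$ into the interior region $\{|x|\leq t/2\}$, where $t\leq (2/\sqrt 3)T$ and a gain of one power of $T$ is automatic, and the exterior strip $\{t/2<|x|\leq t-1\}$, in which one uses that $u$ and its $Z$-derivatives vanish at the conical boundary $|x|=t-1$ together with a Hardy-type/fundamental-theorem-of-calculus argument along the radial direction on $H_T$ (converting a $\delb_a$ into a factor of $R-|x|\leq R\sim T^2$, but only against $\delb$-derivatives which are the small ones). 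The case $|J|=0$ in particular requires such a Hardy step, since $\|u\|_{L^2(H_T)}$ itself is not controlled by Part 1. Beyond this bookkeeping, no new ideas beyond the one-frame decomposition, the energy identity, and Lemma~\ref{pre lem sobolev} are needed.
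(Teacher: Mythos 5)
Your treatment of \eqref{pre lem u it-self energy} is correct and essentially the paper's own argument: the paper splits according to whether $Z^I$ contains a factor $\del_{\alpha}$ (commuting it via \eqref{pre commutator H-partial} and using the $(s/t)\del_{\alpha}$ part of the energy) or consists only of the $H_a$ (writing $t^{-1}H_aZ^{I'}u=\delb_aZ^{I'}u$); peeling off the outermost factor and expanding $\del_{\alpha}=\Psi_{\alpha}^{\beta}\delu_{\beta}$ is an equivalent, if anything tidier, bookkeeping.

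The decay estimate \eqref{pre lem u it-self decay} is where the proposal has a genuine gap, and it sits exactly at the point you flag as the main obstacle. After applying Lemma \ref{pre lem sobolev} to $Z^Ju$, the only data you retain on $H_T$ are $L^2$ quantities: $\|t^{-1}Z^Ku\|_{L^2(H_T)}\le CC'T^{\varepsilon}$ from Part 1 and $\|\delb_aZ^Ku\|_{L^2(H_T)}+\|(T/t)\del_{\alpha}Z^Ku\|_{L^2(H_T)}\le CC'T^{\varepsilon}$ from the energy. On $H_T\cap\Lambda'$ one has $r\le (T^2-1)/2$, so the exterior strip has radial extent of order $T^2$; consequently the Hardy/fundamental-theorem step you sketch, which trades one $\delb_a$ against the length of the radial ray, returns a factor $T^2$, not $T$, and yields only $\|Z^Ku\|_{L^2(H_T)}\lesssim T^{2+\varepsilon}$, hence $|Z^Ju|\lesssim t^{-n/2}T^{2+\varepsilon}$ --- one full power of $T$ short. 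Moreover this loss cannot be repaired inside your reduction: for a function $w$ of amplitude $A$ varying on the scale $R\sim T^2$ in the outer part of $H_T\cap\Lambda'$ (where $t\sim r\sim R$) one has $\|\delb w\|_{L^2(H_T)}\sim\|t^{-1}w\|_{L^2(H_T)}\sim AR^{(n-2)/2}$ while $\|w\|_{L^2(H_T)}\sim AR^{n/2}$, so the quantities you control undershoot the norm you need by exactly the factor $R\sim T^2$, at every order of $Z$; no combination of them can give the intermediate bound $\|Z^Ku\|_{L^2(H_T)}\lesssim T^{1+\varepsilon}$ that your Sobolev step requires.

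The missing idea is to use the sup-norm information on the \emph{derivative} before ever forming an unweighted $L^2$ norm. The paper applies Lemma \ref{pre lem decay} to $Z^Ju$ to get the pointwise bound $|\del_\alpha Z^Ju|\le CC'\,t^{-(n-2)/2}T^{\varepsilon-1}$, which, rewritten with $T^2=(t-r)(t+r)$, reads $|\del_rZ^Ju|\le CC'\,t^{-(n-1-\varepsilon)/2}(t-r)^{-(1-\varepsilon)/2}$, and then integrates in $r$ at fixed $t$ from the cone boundary $r=t-1$, where $Z^Ju$ vanishes by the support assumption. The radial integral converges and contributes $(t-r)^{(1+\varepsilon)/2}\lesssim T^{1+\varepsilon}t^{-(1+\varepsilon)/2}$, which is precisely where the power $T^{1+\varepsilon}$ (and the extra $t$-decay) comes from; the decay of the radial derivative in $t-r$ is the information destroyed when you pass to unweighted $L^2$ norms on $H_T$, and region-splitting plus Hardy does not recover it. This pointwise route also makes your $|J|=0$ worry moot, since $\|u\|_{L^2(H_T)}$ is never needed.
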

\begin{proof}
\eqref{pre lem u it-self energy} is proved as follows.
When the operator $Z^I$ contains one factor $\del_{\alpha}$, by lemma \ref{pre exression of energy} and \eqref{pre commutator H-partial},
$$
\int_{H_s}\big|(s/t)Z^Iu\big|^2dx \leq C\sum_{|J|\leq |I|-1}E_m(s,Z^J u).
$$
When all of the factor of $Z^I$ are $H_a$, notice that $t^{-1}H_a = \delb_a$, by \eqref{pre exression of energy}
$$
\int_{H_s}\big|t^{-1}Z^Iu\big|^2dx = \int_{H_s}\big|\delb_aZ^{I'}u\big|^2dx \leq C\sum_{|J|\leq |I|-1}E_m(s,Z^J u).
$$
When $\sum_{|I|\leq |J|+p(n)} E_m(s,Z^I u)^{1/2}$ is bounded by $C'T^{\varepsilon}$, by lemma \ref{pre lem decay}, in the cone $\Lambda'$,
$$
|\del_r u| \leq C(n)t^{-(n-1-\varepsilon)/2}(t-r)^{-(1-\varepsilon)/2}.
$$
then the proof of \eqref{pre lem u it-self decay} is a integration along the radical direction.
\end{proof}
\section{Main result}\label{main sec}
One will consider the Cauchy problem associated to the following coupled wave and Klein-Gordon equations with quadratic nonlinearity:
\begin{equation}\label{main eq main}
\left\{
\aligned
&\Box w_i + G_i^{\jr\alphar\betar}(w,\del w)\del_{\alphar\betar}w_{\jr} + D_i^2 w_i = F_i(w,\del w),
\\
&w_i(B+1,x) = \varepsilon {w_i}_0,\quad w_i(B+1,x) = \varepsilon {w_i}_1.
\endaligned
\right.
\end{equation}
Here $1\leq i \leq n_0$. $D_i$ are constants. $D_i=0$ for $1\leq i \leq j_0$ and $D_i>0$ for $j_0+1\leq i \leq n_0$.

For the convenience of proof, one makes the following conventions of index. The Latin index $i,j,k,l$ denote one of the integer $1,2,3,\cdots,n_0$. the Latin
index with a circumflex accent on it such as $\ih,\jh,\kh,\lh$ denote one of the integer $1,2,3,\cdot, j_0$. The Latin index with a hacek on it such as
$\ic,\jc,\kc,\lc$  denote one of the integer $j_0+1, j_0+2,\cdots,n_0$.

$G_i^{j\alpha\beta}(\cdot,\cdot)$ and $F_i(\cdot,\cdot)$ are regular functions such that:
$$
\aligned
&G_i^{j\alpha\beta}(w,\del w) = A_i^{j\alpha\beta\gammar \kr}\del_{\gammar} w_{\kr} + B_i^{j\alpha\beta \kr}w_{\kr}.
\\
&F_i(w,\del w) = P_i^{\alphar\betar\jr\kr}\del_{\alphar}w_{\jr} \del_{\betar}w_{\kr} + Q_i^{\alphar\jr\kr} w_{\kr}\del_{\alphar}w_{\jr} + R_i^{\jr\kr}w_{\jr}w_{\kr}.
\endaligned
$$
Here $A_i^{j\alpha\beta\gamma k},B_i^{j\alpha\beta k},P_i^{\alpha\beta jk},Q_i^{\alpha jk},R_i^{jk}$ are constants with absolute value bounded by $K$.
One impose the following null conditions
\begin{equation}\label{main conditions null}
A_{\ih}^{\jh\alphar\betar\gammar \kh}\xi_{\alphar}\xi_{\betar}\xi_{\gammar}
= B_{\ih}^{\jh\alphar\betar \kh}\xi_{\alphar}\xi_{\betar}
= P_{\ih}^{\alphar\betar\jh\kh}\xi_{\alphar}\xi_{\betar} = 0, \quad \text{for any}\quad \xi_0\xi_0 - \sum_a\xi_a\xi_a = 0.
\end{equation}
One also supposes that
\begin{equation}\label{main conditions no-u}
B_i^{\jc\alpha\beta\kh}=Q_i^{\alpha j\kh} = R_i^{j\kh} = R_i^{\jh k} = 0.
\end{equation}
The initial data ${w_i}_0$ and ${w_i}_1$ are supposed to be ($C^{\infty}$) regular functions supported on the disc $\{|x|\leq B+1\}$.

Now one is ready to state the main theorem.
\begin{theorem}\label{main thm main}
Suppose \eqref{main conditions null} and \eqref{main conditions no-u} hold. Then there exists an $\varepsilon_0>0$ such that for any
$0\leq \varepsilon\leq \varepsilon_0$, \eqref{main eq main} has a unique global-in-time regular solution. In this case, the hyperbolic energy associated to the
wave components is conserved.
$$
\sum_{\ih,|I|\leq 3}E_m(s,Z^I u_{\ih})^{1/2} \leq C(\varepsilon_0).
$$
\end{theorem}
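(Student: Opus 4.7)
The plan is a continuity argument using Lemma~\ref{pre lem energy} as the closed $L^2$-estimate, Lemmas~\ref{pre lem decay}--\ref{pre lem u it-self} to pass from energies to pointwise bounds, and the null-frame machinery of Lemmas~\ref{pre lem one frame} and \ref{pre lem estimate of 00} to exploit the null structure \eqref{main conditions null}. Local-in-time existence of a smooth solution from data on $H_{B+1}$ is standard (Appendix~A identifies the initial hyperboloidal energy with the usual $H^1$-norm on $\{t=B+1\}\times\mathbb{R}^3$), so by a standard continuation criterion it is enough to obtain an a priori bound on sufficiently many hyperboloidal energies.

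\textbf{Bootstrap setup.} With $n=3$ and $p(n)=2$, fix a large integer $N$ (say $N\ge 10$), a small parameter $\delta>0$, and constants $C_1=2C_0$ and $\varepsilon_0>0$ to be chosen below. On the maximal interval $[B+1,s^*)$ of existence assume the two inequalities
\begin{equation*}
\sum_{i,|I|\le N} E_m(s,Z^I w_i)^{1/2}\le C_1\varepsilon(1+s)^\delta,\qquad
\sum_{i,|I|\le N-2} E_m(s,Z^I w_i)^{1/2}\le C_1\varepsilon.
\end{equation*}
Lemmas~\ref{pre lem decay}, \ref{pre lem decay high order} and \ref{pre lem u it-self} then provide pointwise bounds on $H_s$, in particular
\begin{equation*}
\big|(T/t)\del_\alpha Z^I w_i\big|+\big|\delb_a Z^I w_i\big|+\big|D_i Z^I w_{\ic}\big|\lesssim C_1\varepsilon\,t^{-3/2},\qquad |I|\le N-2,
\end{equation*}
together with the improved second-order decay $|\delb_\alpha\delb_a Z^I w_i|\lesssim C_1\varepsilon\,t^{-3/2}T^{-1}$ for $|I|\le N-3$; Lemma~\ref{pre lem estimate of 00} then reclaims the remaining $\delu_0\delu_0$ component from the equation itself. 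All $Z$-derivatives commute with $\Box$, $\delb_a$ and $(T/t)\del_\alpha$ up to controllable errors by Lemma~\ref{pre lem commutator}.

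\textbf{Source-term estimate.} Applying $Z^I$ to \eqref{main eq main} gives, after Lemma~\ref{pre lem commutator},
\begin{equation*}
\Box Z^I w_i + G_i^{\jr\alphar\betar}\del_{\alphar\betar}Z^I w_{\jr} + D_i^2 Z^I w_i = Z^I F_i + \big[Z^I,\,G_i^{\jr\alphar\betar}\del_{\alphar\betar}\big]w_{\jr},
\end{equation*}
to which Lemma~\ref{pre lem energy} is applied. The right-hand side splits into four structurally distinct types. For wave--wave--wave interactions (all indices in $\{1,\dots,j_0\}$), the null condition \eqref{main conditions null} applies: rewriting the quadratic form in the one-frame via \eqref{pre frame change of frame} and invoking Lemma~\ref{pre lem one frame}, the $\delu_0\delu_0$ coefficient acquires an extra factor $(T/t)^2$, exactly cancelling the $(T/t)^{-2}$ weight appearing in \eqref{pre expression of wave under oneframe}. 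Interactions involving at least one Klein-Gordon factor $w_{\ic}$ require no null structure, since Lemma~\ref{pre lem decay} gives $|Z^I w_{\ic}|\lesssim C_1\varepsilon\,t^{-3/2}$ even when undifferentiated. The zero-order terms in $Q$ and $R$ are precisely those forbidden by \eqref{main conditions no-u} to carry an undifferentiated wave factor, so every surviving monomial has either a differentiated wave factor or a decaying Klein-Gordon factor. Combining these structural observations with the pointwise bounds above and pulling one $L^2$ factor from the bootstrap yields
\begin{equation*}
\Big(\int_{H_\tau}|Z^I F_i|^2+\big|[Z^I,G\del\del]w_{\jr}\big|^2\,dx\Big)^{1/2}\lesssim C_1^2\varepsilon^2(1+\tau)^{-1+\delta},
\end{equation*}
with the analogous bound free of the $(1+\tau)^\delta$ loss at order $|I|\le N-2$. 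Hypotheses \eqref{pre lem energy curved energy is big} and \eqref{pre lem energy curveterm is small} of Lemma~\ref{pre lem energy} are verified by the same structural arguments, since $G$ is itself cubically small under the bootstrap.

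\textbf{Closing and main obstacle.} Integrating Lemma~\ref{pre lem energy} in $s$ gives
\begin{equation*}
\sum_{i,|I|\le N}E_m(s,Z^I w_i)^{1/2}\le C_0\varepsilon+CC_1^2\varepsilon^2\delta^{-1}(1+s)^\delta,
\end{equation*}
together with the corresponding bound at order $N-2$ without the $(1+s)^\delta$ factor. Choosing $C_1=2C_0$ and $\varepsilon_0$ so small that $CC_1\varepsilon_0/\delta\le 1/4$ strictly improves both bootstrap constants, so continuity forces $s^*=\infty$. Since $3\le N-2$, restriction to hat indices gives the asserted bound on $\sum_{\ih,|I|\le 3}E_m(s,Z^I w_{\ih})^{1/2}$. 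The hardest part of the argument is the source-term step: verifying that the $(T/t)^2$ gain of Lemma~\ref{pre lem one frame} survives every Leibniz expansion of $Z^I$, so that it exactly matches the $(T/t)^{-2}$ weight sitting in front of $\delu_0\delu_0$ in the wave operator. Once this algebraic cancellation is tracked through consistently, the remaining estimates are straightforward consequences of Section~\ref{pre sec}.
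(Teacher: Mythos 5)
Your overall architecture (bootstrap on hyperboloidal energies at two orders, pointwise decay from Lemma \ref{pre lem decay}, null structure exploited through Lemma \ref{pre lem one frame} in the one-frame) is the same as the paper's, but there is a genuine gap in how you set up and close the bootstrap. You assume a \emph{uniform} (non-growing) bound $\sum_{i,|I|\le N-2}E_m(s,Z^Iw_i)^{1/2}\le C_1\varepsilon$ for \emph{all} components, including the Klein--Gordon ones. This cannot be closed: the Klein--Gordon equations carry the terms $P_{\ic}^{\alphar\betar\jhr\khr}\del_{\alphar}u_{\jhr}\del_{\betar}u_{\khr}$ (and $\del u\,\del v$, $\del v\,\del v$), on which \eqref{main conditions null} imposes \emph{no} null condition, and the best $L^2(H_s)$ bound available for such a product under the bootstrap is of order $s^{-1}$ (one factor in $L^\infty$, $|\del Z^Ju|\lesssim C_1\varepsilon\,t^{-1/2}s^{-1}$, the other through $\|(s/t)\del Z^Iw\|_{L^2}$, and $t^{1/2}s^{-2}\lesssim s^{-1}$ inside $\Lambda'$). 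That decay is not integrable in $s$, so Lemma \ref{pre lem energy} only yields growth like $s^{\delta}$ (or a logarithm) for the Klein--Gordon energies even at low order, contradicting your uniform assumption. The same issue undermines your closing step: you claim the order-$N-2$ source bound is merely ``free of the $(1+\tau)^{\delta}$ loss,'' i.e.\ of size $\tau^{-1}$, but $\tau^{-1}$ is not integrable, so this does not reproduce a constant-in-$s$ energy bound for anybody. The paper's bootstrap is deliberately asymmetric for exactly this reason (see \eqref{proof energy assumption}): only the \emph{wave} components $u_{\ih}$ are assumed uniformly bounded at orders $\le 3$, the Klein--Gordon components are allowed $C_1\varepsilon s^{\delta}$ at all orders $\le 4$, and the low-order wave sources are shown to decay like $s^{-3/2+2\delta}$ (Lemmas \ref{proof lem L2 F 3}, \ref{proof lem L2 com complete}, \ref{proof lem Mv 3}), which is integrable precisely because every monomial in $F_{\ih}$ and $G_{\ih}$ is either a null form in wave factors or contains a Klein--Gordon factor decaying like $t^{-3/2}$ (up to $s^{\delta}$). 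To repair your argument you must adopt this asymmetric hierarchy and propagate the $s^{\delta}$ loss on the Klein--Gordon pointwise bounds through all source estimates.

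A secondary omission: for the quasilinear terms $B_{\ih}^{\jh\alpha\beta\kh}u_{\kh}\del_{\alpha\beta}u_{\jh}$ allowed by \eqref{main conditions no-u}, recovering the ``bad'' derivative $\delu_0\delu_0 Z^Iu_{\ih}$ is not just an application of Lemma \ref{pre lem estimate of 00}: the coefficient multiplying $\delu_0\delu_0$ is perturbed by $(t/s)^2\Bu_{\ih}^{\jh 00\kh}u_{\kh}$, and one must invert a small linear system (the paper's Lemma \ref{proof lem 00 expression}, requiring $K|u_{\kh}|(t/s)^2$ small via the null condition on $B$) before the decay and $L^2$ bounds of Lemmas \ref{proof lem 00 decay} and \ref{proof lem 00 energy estimate} can be established and then fed into the commutator and $M(s)$ estimates. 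Your sketch treats this step as automatic, which it is not.
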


\begin{remark}

One improvement is that in \cite{Ka}, the system is not allowed to contain the term $\del u_{\ih} \del_t\del_t u_{\jh}$ and $u_{\ih}\del^2 u_{\jh}$
In this article this restriction is relaxed. One only demand classical null conditions on these terms.

The most important improvement is that in the proof one will used nothing technical but only the tools one has prepared in section \ref{pre sec}.
\end{remark}

\begin{remark}
This theorem also holds in the case where ${w_i}_0$ and ${w_i}_1$ are not $C^{\infty}$ (but still compactly supported). This is by a standard procedure of approximation.
In general one only demand that ${w_i}_0,\,{w_i}_1 \in H^4$. That is because following proof consults only the derivatives of solution of order $\leq 4$. This is
 also an improvement compared with \cite{Ka}, where the proof consults at least the $19^{th}$-order derivatives.
\end{remark}

\begin{remark}
When $n_0=j_0$, the theorem reduced to the classical result of global existence of regular solution to quasilinear wave equation with null conditions, see \cite{Ho}
for example. One can check that with out the Klein-Gordon components $v_i$, the following proof becomes very short and trivial which is simpler than the
classical one. Furthermore, the energy $E_m(s,Z^{I^*}u_{\ih})$ is conserved. This means the global solution is not only a ``small amplitude solution" but also
``small energy solution".
\end{remark}
\section{Proof of main result}\label{proof sec}
\subsection{Structure of the proof}
The proof is a standard boot-strap argument deviled into five parts. In the first part one supposes that in an interval $[0,T^*]$ the energy $E_m(s,Z^{I^*}w_i)$
is bounded for $0\leq s\leq T^*$. By lemma \ref{pre lem decay}, lemma \ref{pre lem decay high order} and lemma \ref{pre lem u it-self}, one gets the decay
estimates of $w_j,\,\delb w_j$ and $\delb\del w_j$. In part two with the those decay estimates, the $L^2$ norm of some source terms will be controlled. In part
three one gives the $L^2$ and decay estimates of the only ``bad" second-order derivative $\delu_0\delu_0 w$. In part four, equipped with the result of
part three one will give the $L^2$ estimates of the rest source terms. Then with all of these preparation, one will finally establish the main result in the last
part.
\subsection{Part one -- Energy assumption}
Suppose that on a interval $[B+1,T^*]$, the following energy assumptions hold with $0<\delta<1/6$:
\begin{equation}\label{proof energy assumption}
\aligned
&E_m(s,Z^{I^*} v_{\jc})^{1/2} \leq C_1\varepsilon s^{\delta}, \quad \text{for}\quad j_0+1\leq \jc\leq n_0,\quad 0\leq |I^*|\leq 4,
\\
&E_m(s,Z^{I^*} u_{\ih})^{1/2} \leq C_1\varepsilon s^{\delta},\quad \text{for}\quad 1\leq \ih\leq j_0, \quad|I^*|= 4,
\\
&E_m(s,Z^I u_{\ih})^{1/2} \leq C_1\varepsilon,\quad \text{for}\quad 1\leq \ih\leq j_0, \quad |I|\leq 3.
\endaligned
\end{equation}
By theorem \ref{appendix Thm A}, for any $C_1\varepsilon$, one can choose $\varepsilon'$ small enough such that
$$
\sum_{i\atop|I^*|\leq 4}E_m(B+1,Z^{I^*} w_i)^{1/2} < C_1\varepsilon,
$$
so that by continuity, $T^*>B+1$.

From \eqref{proof energy assumption} and lemma \ref{pre lem commutator}, the following $L^2$ estimates hold on $[B+1,T^*]$:
\begin{equation}\label{proof used energy estimates}
\aligned
&\sum_{\ih,\alpha\atop |I|\leq 3}\bigg(\int_{H_s}\big|(s/t)Z^I \del_{\alpha} u_{\ih}\big|^2 dx\bigg)^{1/2}
+ \sum_{\ih,a\atop |I|\leq 3}\bigg(\int_{H_s}\big| Z^I \delu_a u_{\ih}\big|^2 dx\bigg)^{1/2} \leq C C_1\varepsilon,
\\
&\sum_{\ih,\alpha\atop|I^*|\leq 4}\bigg(\int_{H_s}\big|(s/t)Z^{I^*} \del_{\alpha} w_i\big|^2 dx\bigg)^{1/2}
+ \sum_{\ih,a\atop |I^*|\leq 4}\bigg(\int_{H_s}\big| Z^{I^*} \delu_a w_{\ih}\big|^2 dx\bigg)^{1/2} \leq C C_1\varepsilon s^{\delta},
\\
&\sum_{\jc\atop|I^*|\leq 4}\bigg(\int_{H_s}\big|Z^{I^*}v_{\jc}\big|^2dx\bigg)^{1/2} \leq C C_1\varepsilon s^{\delta}.
\endaligned
\end{equation}
By lemma \ref{pre lem decay high order}, one also has the following $L^2$ estimates:
\begin{equation}\label{proof used energy estimates 2-order}
\aligned
&\sum_{a,\beta\atop \ih,|I|\leq 2}\bigg(\int_{H_s}\big|sZ^I \delu_a\delu_{\beta} u_{\ih}\big|^2 dx\bigg)^{1/2}
+\sum_{a,\beta\atop \ih,|I|\leq 2} \bigg(\int_{H_s}\big|sZ^I \delu_{\beta}\delu_a u_{\ih}\big|^2 dx\bigg)^{1/2} \leq C C_1\varepsilon,
\\
&\sum_{a,\beta\atop \ih,|I|\leq 3}\bigg(\int_{H_s}\big|sZ^I \delu_a\delu_{\beta} u_{\ih}\big|^2 dx\bigg)^{1/2}
+\sum_{a,\beta\atop \ih,|I|\leq 3} \bigg(\int_{H_s}\big|sZ^I \delu_{\beta}\delu_a u_{\ih}\big|^2 dx\bigg)^{1/2} \leq C C_1\varepsilon s^{\delta}.
\endaligned
\end{equation}
The following decay estimates come from lemma \ref{pre lem decay}. For $|J^*|\leq 2$ and $|J|\leq 1$,
\begin{equation}\label{proof decay preliminary}
\aligned
&\sup_{H_s}\Big(\big|s t^{1/2} \del_{\alpha}Z^{J^*} w_j\big| \Big) + \sup_{H_s}\Big(\big|t^{3/2}\big|\delu_a Z^{J^*} w_j\big| + t^{3/2}\big|Z^{J^*}v_{\kc}\big|\Big)
\leq C C_1\varepsilon s^{\delta},
\\
&\sup_{H_s}\Big(\big|s t^{1/2} \del_{\alpha}Z^J u_{\kh}\big| \Big) + \sup_{H_s}\Big(\big|t^{3/2}\big|\delu_a Z^J h_{\kh}\big|\Big)
\leq C C_1\varepsilon
\endaligned
\end{equation}
The following decay estimates will be more often used in the proof. The first inequality is due to \eqref{proof decay preliminary}
and lemma \ref{pre lem decay}. The second is due to \eqref{proof decay preliminary} and \ref{pre lem commutator T/t}. The last one  is due to
\eqref{pre lem commutator second-order bar}.
\begin{equation}\label{proof used decay}
\aligned
&\sup_{H_s}\big|s t^{1/2}Z^J \del_{\alpha} u_{\jh}\big| + \sup_{H_s}\big|t^{3/2}Z^J\delu_a  u_{\jh}\big| \leq C C_1\varepsilon,
\\
&\sup_{H_s}\Big(\big|s t^{1/2} Z^{J^*}\del_{\alpha} v_{\kc}\big| \Big) + \sup_{H_s}\Big(\big|t^{3/2} \big|Z^{J^*} \delu_av_{\kc}\big| + t^{3/2}\big|Z^{J^*}v_{\kc}\big|\Big)
\leq C C_1\varepsilon s^{\delta},
\\
&\sup_{H_s}\big|st^{3/2}\delu_a\delu_{\beta}Z^J u_{\jh}\big| + \sup_{H_s}\big|st^{3/2}\delu_{\alpha}\delu_bZ^J u_{\jh}\big|
\leq C C_1\varepsilon s^{\delta},
\\
&\sup_{H_s}\big|st^{3/2}\delu_a\delu_{\beta} u_{\jh}\big| + \sup_{H_s}\big|st^{3/2}\delu_{\alpha}\delu_b u_{\jh}\big|
\leq C C_1\varepsilon s,
\endaligned
\end{equation}
\subsection{Part two -- $L^2$ estimates}
In this step one will give $L^2$ type estimates of some quadratic terms which are components of the source terms.
In general one has the following $L^2$ estimates.
\begin{lemma}\label{proof lem L2 F 3}
Let $\{w_j\}$ be regular solution of \eqref{main eq main}.
Suppose that \eqref{proof used decay}, \eqref{proof used energy estimates} and \eqref{proof used energy estimates} hold. Let $\mathcal {A}_3$
be any of the following terms:
$$
v_{\kc}v_{\jc},\quad v_{\kc}\del_{\alpha}w_j,\quad \del_{\alpha}v_{\jc}\del_{\beta}w_k,
\quad\delu_a u_{\jh}\delu_{\beta} u_{\kh}.
$$
Then for any $|I|\leq 3$,
$$
\bigg(\int_{H_s}\big|Z^I\mathcal{A}_3\big|^2dx\bigg)^{1/2} \leq C(C_1\varepsilon)^2 s^{-3/2 + 2\delta}.
$$
Furthermore, if $\Gamma(t,x)$ is a regular function such that for any multi-index $J$, the following estimate holds in $\Lambda'$:
$$
\big|Z^J \Gamma\big| \leq C(J)(s/t),
$$
then for any $|I|\leq 4$,
$$
\bigg(\int_{H_s}\big|Z^I(\Gamma \delu_0 u_{\ih}\delu_0 u_{\jh})\big|^2dx\bigg)^2 \leq C(C_1\varepsilon)^2 s^{-3/2+2\delta}.
$$
Especially, for any $|I|\leq 3$,
$$
\bigg(\int_{H_s}\big|Z^IF_{\ih}\big|^2dx\bigg)^{1/2} \leq C(C_1\varepsilon)^2 s^{-3/2+2\delta},
$$
\end{lemma}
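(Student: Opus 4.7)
The strategy across all three statements is uniform: expand $Z^I$ by Leibniz and apply H\"older, placing the factor of higher differential order in $L^2$ (using the energy bounds \eqref{proof used energy estimates}--\eqref{proof used energy estimates 2-order}) and the factor of lower differential order in $L^\infty$ (using the pointwise decay \eqref{proof used decay}). Because $|I|\le 3$, every binary split $I=I_1+I_2$ satisfies $\min(|I_1|,|I_2|)\le 1$, which is well inside the $|J|\le 1$ window for wave factors and the $|J^*|\le 2$ window for Klein-Gordon factors in \eqref{proof used decay}; for the triple product arising in the $|I|\le 4$ statement one checks that at least one of the three orders is $\le 2$. Throughout, on $H_s$ one has $t\ge s$, so $t^{-k}\le s^{-k}$ will be used without comment.

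For the four classes of $\mathcal{A}_3$ the bookkeeping is then routine. For $v_\kc v_\jc$, combining $|Z^{I_1}v_\kc|\le CC_1\varepsilon s^\delta t^{-3/2}$ with $\|Z^{I_2}v_\jc\|_{L^2(H_s)}\le CC_1\varepsilon s^\delta$ delivers $C(C_1\varepsilon)^2 s^{2\delta-3/2}$. For $v_\kc\del_\alpha w_j$ and $\del_\alpha v_\jc\del_\beta w_k$, the derivative factors are controlled in $L^2$ only after inserting an $(s/t)$ weight; the $(s/t)$ deficit is paid for by the companion factor via the elementary inequalities $t^{-3/2}(t/s)\le s^{-3/2}$ or $t^{-1/2}(t/s)\le s^{-1/2}$ valid on $H_s$. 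For $\delu_a u_\jh\delu_\beta u_\kh$, the factor $\delu_a u_\jh$ either decays as $t^{-3/2}$ in $L^\infty$ (for low order, by \eqref{proof used decay}) or sits in $L^2$ via \eqref{proof used energy estimates} (for high order, after commuting $Z$ through $\delu_a$ using \eqref{pre lem commutator bar}), and the same $(s/t)$-trading argument handles the borderline case $\beta=0$ where $\delu_\beta=\del_t$.

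The $\Gamma\delu_0 u_\ih \delu_0 u_\jh$ estimate is where the hypothesis on $\Gamma$ becomes essential: the bad derivative $\delu_0 u=\del_t u$ is not a good derivative, so on the energy side only the weighted norm $\|(s/t)Z^I\del_t u\|_{L^2}$ is available. Leibniz gives triple products $Z^{I_1}\Gamma\cdot Z^{I_2}\del_t u_\ih\cdot Z^{I_3}\del_t u_\jh$ with $|I_1|+|I_2|+|I_3|\le 4$; at least one of $|I_2|,|I_3|$, say $|I_3|$, is $\le 2$. Then $Z^{I_3}\del_t u_\jh$ goes in $L^\infty$ with bound $CC_1\varepsilon s^{\delta-1}t^{-1/2}$ (obtained from \eqref{proof used energy estimates} via Lemma \ref{pre lem decay}), while the pair $Z^{I_1}\Gamma\cdot Z^{I_2}\del_t u_\ih$ is placed in $L^2$ using $|Z^{I_1}\Gamma|\le C(s/t)$ together with $\|(s/t)Z^{I_2}\del_t u_\ih\|_{L^2}\le CC_1\varepsilon s^\delta$. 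Collecting and using $t^{-1/2}\le s^{-1/2}$ produces $C(C_1\varepsilon)^2 s^{2\delta-3/2}$ once more.

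The estimate on $F_\ih$ is obtained by dismantling $F_\ih$ into its three constituents. Assumption \eqref{main conditions no-u} kills the coefficients $R_\ih^{\jh k}$, $R_\ih^{j\kh}$ and $Q_\ih^{\alphar j\kh}$, so $R_\ih^{\jr\kr}w_\jr w_\kr$ and $Q_\ih^{\alphar\jr\kr}w_\kr\del_\alphar w_\jr$ reduce to terms of type $v_\kc v_\jc$ and $v_\kc\del_\alpha w_j$ respectively. The quadratic $P_\ih^{\alphar\betar\jr\kr}\del_\alphar w_\jr\del_\betar w_\kr$ is split according to whether each Latin index is hatted or checked; every piece with at least one Klein-Gordon index falls into the $\del_\alpha v\,\del_\beta w$ class. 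Only the purely wave piece $P_\ih^{\alphar\betar\jh\kh}\del_\alphar u_\jh\del_\betar u_\kh$ remains, and for this one rewrites in the one-frame as $\Pu_\ih^{\alphar\betar\jh\kh}\delu_\alphar u_\jh\delu_\betar u_\kh$: every component with a spatial one-frame index carries a good derivative $\delu_a u$ and is of type $\delu_a u\,\delu_\beta u$, while the $(00)$-component invokes the null condition \eqref{main conditions null} and Lemma \ref{pre lem one frame} to give $|Z^J\Pu_\ih^{00\jh\kh}|\le CK(s/t)^2\le CK(s/t)$, placing this term exactly into the $\Gamma\delu_0 u\delu_0 u$ framework. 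This last step, extracting the $(s/t)^2$ gain from the null condition in order to tame the otherwise borderline $\del_t u\,\del_t u$ product, is the only genuine structural input in the proof and constitutes its main obstacle.
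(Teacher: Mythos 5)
Your proposal is correct and follows essentially the same route as the paper: Leibniz splitting with the low-order factor in $L^\infty$ via \eqref{proof used decay} and the high-order factor in weighted $L^2$ via \eqref{proof used energy estimates}, the $(s/t)$-trading for the $\Gamma\delu_0 u_{\ih}\delu_0 u_{\jh}$ term, and the treatment of $F_{\ih}$ by the one-frame decomposition of the pure wave part with the null condition and Lemma \ref{pre lem one frame} supplying $|Z^J\Pu_{\ih}^{00\jh\kh}|\leq C(s/t)^2$ so that the $(00)$-component falls into the $\Gamma$-framework. No gaps to report.
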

\begin{proof}
One begins with the estimates on $\mathcal {A}_3$. The proof is mainly a substitution of \eqref{proof used energy estimates},
\eqref{proof used energy estimates 2-order} and \eqref{proof used decay} into the corresponding expressions. Notice that when a product of derivatives $Z^I$ acts
on a product of two factor, there is always one factor derived less than $|I|/2$ times which may be controlled by \eqref{proof used decay}. Then the $L^2$ norm of
the hole product can be controlled by \eqref{proof used energy estimates} or \eqref{proof used energy estimates 2-order} when $|I|/2\geq p(3)$.
One writs the proof on $\del_{\alpha}v_{\jc}\del_{\beta}w_k$ and $\delu_a u_{\ih}\delu_{\beta}u_{\jh}$ in detail and omits the others.
Suppose that $|I|\leq 3$.
$$
\aligned
&\quad \bigg(\int_{H_s}\big|Z^I\big(\del_{\alpha}v_{\jc}\del_{\beta}w_k\big)\big|^2dx\bigg)^{1/2}
\\
&\leq  \sum_{|I_1|\leq 2\atop I_1 + I_2 = I}\bigg(\int_{H_s}\big|Z^{I_1}\del_{\alpha}v_{\jc}Z^{I_2}\del_{\beta}w_k\big|^2dx\bigg)^{1/2}
       +\bigg(\int_{H_s}\big|Z^I\del_{\alpha}v_{\jc}\del_{\beta}w_k\big|^2dx\bigg)^{1/2}
\\
&\leq \sum_{|I_1|\leq 2\atop I_1 + I_2 = I}
\bigg(\int_{H_s}\big|CC_1\varepsilon t^{-3/2}s^{\delta}(t/s)\big|^2\cdot\big|(s/t)Z^{I_2}\del_{\beta}w_k\big|^2dx\bigg)^{1/2}
\\
&\quad+\bigg(\int_{H_s}\big|Z^I\del_{\alpha}v_{\jc}\big|^2\cdot\big|CC_1\varepsilon t^{-1/2}s^{-1+\delta}\big|^2dx\bigg)^{1/2}
\\
&\leq \sum_{|I_2|\leq 3}
C(C_1\varepsilon)s^{-3/2+\delta}\bigg(\int_{H_s}\big|(s/t)Z^{I_2}\del_{\beta}w_k\big|^2dx\bigg)^{1/2}
+ C(C_1\varepsilon)s^{-3/2+\delta}\bigg(\int_{H_s}\big|Z^I\del_{\gamma}v_{\jc}\big|^2dx\bigg)^{1/2}
\\
&\leq C(C_1\varepsilon)^2 s^{-3/2+2\delta}.
\endaligned
$$

$$
\aligned
&\quad\bigg(\int_{H_s}\big|Z^I\big(\delu_a u_{\ih}\delu_{\beta}u_{\jh}\big)\big|^2dx\bigg)^{1/2}
\\
&\leq \sum_{|I_1|\leq 2\atop I_1 + I_2 = I}\bigg(\int_{H_s}\big|Z^{I_1}\delu_au_{\ih}Z^{I_2}\delu_{\beta}u_{\jh}\big|^2dx\bigg)^{1/2}
    + \bigg(\int_{H_s}\big|Z^I\delu_au_{\ih}\delu_{\beta}u_{\jh}\big|^2dx\bigg)^{1/2}
\\
&\leq\sum_{|I_1|\leq 2\atop I_1 + I_2 = I}\bigg(\int_{H_s}\big|CC_1\varepsilon t^{-3/2}s^{\delta}\big|^2\cdot(t/s)^2\big|(s/t)Z^{I_2}\delu_{\beta}u_{\jh}\big|^2dx\bigg)^{1/2}
\\
&\quad +CC_1\varepsilon s^{-3/2}\bigg(\int_{H_s}\big|Z^I\delu_au_{\ih}\big|^2dx\bigg)^{1/2}
\\
&\leq \sum_{|I_1|\leq 2\atop I_1 + I_2 = I}\bigg(\int_{H_s}\big|CC_1\varepsilon t^{-1/2}s^{-1+\delta}\big|^2\cdot\big|(s/t)Z^{I_2}\delu_{\beta}u_{\jh}\big|^2dx\bigg)^{1/2}
+C(C_1\varepsilon)^2 s^{-3/2}
\\
&\leq CC_1\varepsilon s^{-3/2+\delta}\sum_{|I_2|\leq |I|}\bigg(\int_{H_s}\big|(s/t)Z^{I_2}\delu_{\beta}u_{\jh}\big|^2dx\bigg)^{1/2}
+C(C_1\varepsilon)^2 s^{-3/2}
\\
&\leq C(C_1\varepsilon)^2 s^{-3/2+\delta}.
\endaligned
$$

Now one turns to the estimates of $\Gamma\delu_0 u_{\ih}\delu_0 u_{\jh}$. This quadratic forms is composed purely by the ``bad" derivative $\del_0$. But with the additional decay
provided by $\Gamma$, the $L^2$ estimates are still trivial:

$$
\aligned
&\quad \bigg(\int_{H_s}\big|Z^I\big(\Gamma\delu_0u_{\ih}\delu_0u_{\jh}\big)\big|^2dx\bigg)^{1/2}
\\
&\leq \sum_{|I_1|\leq 2\atop I_1 + I_2 + I_3 = I}\bigg(\int_{H_s}\big|Z^{I_3}\Gamma Z^{I_1}\delu_0u_{\ih}Z^{I_2}\delu_0u_{\jh}\big|^2dx\bigg)^{1/2}
\\
&\quad + \bigg(\int_{H_s}\big|\Gamma Z^I\delu_0u_{\ih}\delu_0u_{\jh}\big|^2dx\bigg)^{1/2}
\\
&\leq  C\sum_{|I_1|\leq 2\atop I_1 + I_2 + I_3 = I}\bigg(\int_{H_s}\big|CC_1\varepsilon t^{-1/2}s^{-1+\delta}\big|^2\big|(s/t)Z^{I_2}\delu_0u_{\jh}\big|^2dx\bigg)^{1/2}
\\
&\quad+C\bigg(\int_{H_s}\big|CC_1\varepsilon t^{-1/2}s^{-1}\big|^2\big|(s/t)Z^I\delu_0u_{\ih}\big|^2dx\bigg)^{1/2}
\\
&\leq C(C_1\varepsilon)^2 s^{-3/2+\delta}.
\endaligned
$$

The estimate on $Z^IF_{\ih}$ will consult the $L^2$ estimates proved. By definition,
$$
\aligned
F_{\ih} &= P_{\ih}^{\alphar\betar\jhr\khr}\del_{\alphar}u_{\jhr}\del_{\betar}u_{\khr}
\\
&+ P_{\ih}^{\alphar\betar\jcr\khr}\del_{\alphar}u_{\jcr}\del_{\betar}u_{\khr} + P_{\ih}^{\alphar\betar\jhr\kcr}\del_{\alphar}u_{\jhr}\del_{\betar}u_{\kcr}
 + P_{\ih}^{\alphar\betar\jcr\kcr}\del_{\alphar}u_{\jcr}\del_{\betar}u_{\kcr} + Q_{\ih}^{\alphar\jr\kcr}v_{\kcr}\del_{\alphar}u_{\jr}
 + R_{\ih}^{\jcr\kcr}v_{\jcr}v_{\kcr}.
\endaligned
$$
The first term can be written under one-frame:
$$
\aligned
P_{\ih}^{\alphar\betar\jhr\khr}\del_{\alphar}u_{\jhr}\del_{\betar}u_{\khr}
&= \Pu_{\ih}^{\alphar\betar\jhr\khr}\delu_{\alphar}u_{\jhr}\delu_{\betar}u_{\khr}
\\
&= \Pu_{\ih}^{00\jhr\khr}\delu_0u_{\jhr}\delu_0u_{\khr} + \Pu_{\ih}^{\ar 0\jhr\khr}\delu_{\ar}u_{\jhr}\delu_0u_{\khr}
  +\Pu_{\ih}^{0\br\jhr\khr}\delu_0u_{\jhr}\delu_{\br}u_{\khr} + \Pu_{\ih}^{\ar\br\jhr\khr}\delu_{\ar}u_{\jhr}\delu_{\br}u_{\khr}.
\endaligned
$$
By the null conditions \eqref{main conditions null} and lemma \ref{pre lem one frame},
$$
\big|Z^I\big(\Pu_{\ih}^{00\jh\kh}\big)\big|\leq C(I) (s/t)^2 \leq C(I)(s/t).
$$
So
$$
\bigg(\int_{H_s}\big|Z^I\big(\Pu_{\ih}^{00\jhr\khr}\delu_0u_{\jhr}\delu_0u_{\khr}\big)\big|^2dx\bigg)^{1/2} \leq C(C_1\varepsilon)^2 s^{-3/2+2\delta}.
$$
The rest terms of $F_{\ih}$ have been already estimated by the estimates on $\mathcal {A}_3$ terms. This completes the proof.
\end{proof}
Define
$$
\Gt_i^{\jhr\alphar\betar}\del_{\alphar\betar}u_{\jhr}
:= G_i^{\jhr\alphar\betar}\del_{\alphar\betar}u_{\jhr} - B_i^{\jhr\alphar\betar\khr}u_{\khr}\del_{\alphar\betar}u_{\jhr}
$$
and
$$
\Gt_i^{\jr\alphar\betar}\del_{\alphar\betar}w_{\jr} := \Gt_i^{\jhr\alphar\betar}\del_{\alphar\betar}u_{\jhr} + G_i^{\jcr\alphar\betar}\del_{\alphar\betar}v_{\jcr}.
$$
This is the ``good" part of $G$
\begin{lemma}\label{proof lem com 3}
Let $\{w_i\}$ be regular solution of \eqref{main eq main}. Suppose that \eqref{proof used decay}, \eqref{proof used energy estimates}
and \eqref{proof used energy estimates 2-order} hold. Then for any $|I|\leq 3$,
$$
\bigg(\int_{H_s}\big|[\Gt_{\ih}^{\jr\alphar\betar}\del_{\alphar}\del_{\betar},Z^I]w_j\big|^2dx\bigg)^{1/2}\leq C(C_1\varepsilon)^2s^{-3/2+2\delta}.
$$
\end{lemma}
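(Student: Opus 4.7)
The plan is to mimic the scheme of Lemma~\ref{proof lem L2 F 3}: expand the commutator by Leibniz, then bound each piece by H\"older, putting the factor with fewer $Z$-derivatives in $L^\infty$ via \eqref{proof used decay} and the other in $L^2$ via \eqref{proof used energy estimates} and \eqref{proof used energy estimates 2-order}. Writing
\begin{equation*}
[\Gt_{\ih}^{\jr\alphar\betar}\del_{\alphar}\del_{\betar},\,Z^I]\,w_{\jr}
 \;=\; \sum_{\substack{I_1+I_2=I\\|I_1|\geq 1}}\tbinom{I}{I_1}\,(Z^{I_1}\Gt_{\ih}^{\jr\alphar\betar})\,(Z^{I_2}\del_{\alphar\betar}w_{\jr})
 \;+\; \Gt_{\ih}^{\jr\alphar\betar}\,[Z^I,\,\del_{\alphar\betar}]\,w_{\jr},
\end{equation*}
the second summand is handled by \eqref{pre lem commutator second-order}, which reduces it to $|\Gt|$ times sums of $|\del_{\alphar\betar}Z^Jw_{\jr}|$ with $|J|\leq|I|-1\leq 2$. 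By the definition of $\Gt$ and by \eqref{main conditions no-u}, $\Gt_{\ih}^{\jr\alphar\betar}=A_{\ih}^{\jr\alphar\betar\gammar\kr}\del_{\gammar}w_{\kr}+B_{\ih}^{\jr\alphar\betar\kcr}v_{\kcr}$ contains no $u_{\kh}$ factor, so \eqref{proof used decay} provides $|Z^J\Gt|\lesssim C_1\varepsilon(t^{-1/2}s^{-1+\delta}+t^{-3/2}s^{\delta})$ for small $|J|$.

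In each Leibniz summand one factor carries $\leq 1$ $Z$-derivative and so sits in $L^\infty$; the other is bounded in $L^2$. The estimation is routine whenever $w_{\jr}=v_{\jc}$ (the Klein-Gordon second derivative $Z^{I_2}\del_{\alphar\betar}v_{\jc}$ is a $(|I_2|+1)$-derivative of $\del_{\betar}v_{\jc}$, bounded by \eqref{proof used energy estimates}), and whenever $w_{\jr}=u_{\jh}$ but $(\alphar,\betar)$ involves at least one tangential one-frame index: in that case \eqref{pre frame change of frame} rewrites $\del_{\alphar\betar}u_{\jh}$ as a good second derivative $\delu_a\delu_{\betar}u_{\jh}$ or $\delu_{\alphar}\delu_bu_{\jh}$, bounded in $L^2$ by \eqref{proof used energy estimates 2-order}, plus a first-order $O(t^{-1})$ remainder coming from $\delu\Phi$. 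The arithmetic of the resulting product of decays mirrors the two worked computations already presented in Lemma~\ref{proof lem L2 F 3}.

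The remaining piece is the main obstacle: the $(\alphar,\betar)=(0,0)$ contribution when $w_{\jr}=u_{\jh}$, namely $\underline{\Gt}_{\ih}^{\jh 00}\,\delu_0\delu_0 u_{\jh}$, for which no direct $L^2$ bound on $\delu_0\delu_0 Z^J u_{\jh}$ is available. The rescue is the structure of $\underline{\Gt}_{\ih}^{\jh 00}$: its Klein-Gordon part carries a $v_{\kc}$ factor whose decay $|Z^Jv_{\kc}|\lesssim C_1\varepsilon t^{-3/2}s^{\delta}$ from \eqref{proof used decay} is already enough; its $A$-part is a trilinear expression in $\del u$ verifying the cubic null condition \eqref{main conditions null}, so Lemma~\ref{pre lem one frame} yields an extra $(s/t)^2$ decay for $|Z^J\underline{\Gt}_{\ih}^{\jh 00}|$ — the very same $(s/t)^2$-gain exploited for the $\Gamma\,\delu_0u_{\ih}\,\delu_0u_{\jh}$ term in Lemma~\ref{proof lem L2 F 3}. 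Combining this $(s/t)^2$ with Lemma~\ref{pre lem estimate of 00}, which bounds $(s/t)^2\delu_0\delu_0Z^Ju_{\jh}$ by $|\Box Z^Ju_{\jh}|$ plus good second derivatives plus $t^{-1}|\del Z^Ju_{\jh}|$, converts the bad term into controlled quantities — note that $Z^J\Box u_{\jh}=Z^JF_{\jh}-Z^J(G\del\del w)$, whose $L^2$-norm is governed by Lemma~\ref{proof lem L2 F 3} together with the present lemma applied at lower order $|J|\leq2$. Summing over all Leibniz pieces yields the announced bound $C(C_1\varepsilon)^2s^{-3/2+2\delta}$.
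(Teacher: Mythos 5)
Your decomposition and your treatment of the routine pieces (Klein--Gordon factors, and wave factors with at least one tangential one-frame index) follow the paper's scheme, but the way you close the worst piece has a genuine gap. Having isolated $\Au_{\ih}^{\jh 000\kh}$-type contributions with the $(s/t)^2$ gain from Lemma \ref{pre lem one frame}, you spend the whole weight on the second-derivative factor and invoke Lemma \ref{pre lem estimate of 00}, so that you must bound $\Box Z^J u_{\jh}=Z^JF_{\jh}-Z^J\big(G_{\jh}^{k\alpha\beta}\del_{\alpha}\del_{\beta}w_k\big)$ in $L^2$. Your citation ``Lemma \ref{proof lem L2 F 3} together with the present lemma at lower order'' does not cover this: Lemma \ref{proof lem L2 F 3} controls $Z^JF_{\ih}$ and the $\mathcal{A}_3$-type products only, and the present lemma at lower order controls only the commutator $[G\del\del,Z^J]w$, not the principal part $G_{\jh}^{k\alpha\beta}\del_{\alpha}\del_{\beta}Z^Jw_k$. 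That principal part contains precisely the bad component $\Gu_{\jh}^{k00}\delu_0\delu_0 Z^Ju$ you set out to estimate, so as written the argument is circular; closing it requires the smallness/absorption (or linear-algebra inversion) argument that the paper only develops later, in Lemma \ref{proof lem 00 expression} and Lemma \ref{proof lem 00 energy estimate} of Part three, and which you neither perform nor may yet invoke at this stage. A secondary imprecision: the cubic null condition gives $\big|Z^J\Au_{\ih}^{\jh 000\kh}\big|\leq C(s/t)^2$ only for the component with \emph{all three} indices equal to $0$; the full coefficient $\underline{\Gt}_{\ih}^{\jh 00}$ does not carry an overall $(s/t)^2$ bound, and the $\gamma=c$ components must instead be paired with the good factor $\delu_c u_{\kh}$ (this is repairable, but your statement as written is not correct).

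The detour through the equation is in fact unnecessary, and this is where the paper's proof differs. In the paper one keeps the factorization $(s/t)^2=(s/t)\cdot(s/t)$ and gives one weight to each derivative factor in every Leibniz term $Z^{I_3}\Au_{\ih}^{\jh 000\kh}\,Z^{I_1}\delu_0u_{\kh}\,Z^{I_2}\delu_0\delu_0u_{\jh}$: the quantity $(s/t)Z^{I_2}\delu_0\delu_0u_{\jh}$ is, modulo the commutator estimates of Lemma \ref{pre lem commutator}, of the form $(s/t)Z^{I'}\del_t u_{\jh}$ with $|I'|\leq 3$ and hence is already controlled in $L^2$ by \eqref{proof used energy estimates}, while $(s/t)Z^{I_1}\delu_0u_{\kh}$ is put in $L^{\infty}$ via \eqref{proof used decay} (and the roles are exchanged when $|I_1|$ is large). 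No appeal to $\Box u$, to Lemma \ref{pre lem estimate of 00}, or to any estimate on unweighted $\delu_0\delu_0Z^Ju$ is needed, which is essential because those estimates are only established afterwards. You should rework the $(0,0,0)$ case along these lines (or else supply the full absorption argument you are implicitly relying on).
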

\begin{proof}
Notice the following decomposition:
\begin{equation}\label{proof lem L2 com 3 eq1}
[\Gt_{\ih}^{\jr\alphar\betar}\del_{\alphar}\del_{\betar},Z^I]w_{\jr} = [\Gt_{\ih}^{\jhr\alphar\betar}\del_{\alphar}\del_{\betar},Z^I]u_{\jhr}
+[G_{\ih}^{\jcr\alphar\betar}\del_{\alphar}\del_{\betar},Z^I]v_{\jcr}.
\end{equation}
The second term is decomposed as following:
\begin{equation}\label{proof lem L2 com 3 eq2}
[G_{\ih}^{\jcr\alphar\betar}\del_{\alphar}\del_{\betar},Z^I]v_{\jcr}
= \sum_{|I_1|\geq 1\atop I_1+I_2=I}Z^{I_1}G_{\ih}^{\jcr\alphar\betar}Z^{I_2}\del_{\alphar}\del_{\betar}v_{\jcr}
+ G_{\ih}^{\jcr\alphar\betar}[\del_{\alphar}\del_{\betar},Z^I]v_{\jcr}
\end{equation}
Recall that
$$
|Z^JG_{\ih}^{j\alpha\beta}|\leq C(J)K\sum_{k,\gamma}\big|Z^J\del_{\gamma}w_k\big|.
$$
The $L^2$ norm of the first term in right-hand-side of \eqref{proof lem L2 com 3 eq2} can be estimated as follows:
\begin{equation}\label{proof lem L2 com 3 eq3}
\aligned
&\quad\sum_{|I_1|\geq 1\atop I_1+I_2=I}
\int_{H_s}\bigg(\big|Z^{I_1}G_{\ih}^{\jcr\alphar\betar}Z^{I_2}\del_{\alphar}\del_{\betar}v_{\jcr}\big|^2dx\bigg)^{1/2}
\\
&\leq \sum_{|I_1|= 1\atop I_1+I_2=I} + \sum_{|I_1|\geq 2\atop I_1+I_2=I}
\bigg(\int_{H_s}\big|Z^{I_1}G_{\ih}^{\jcr\alphar\betar}Z^{I_2}\del_{\alphar}\del_{\betar}v_{\jcr}\big|^2dx\bigg)^{1/2}
\\
&\leq \sum_{\alpha,\beta,\jc,|I_1|=1 \atop I_1+I_2=I}
\bigg(\int_{H_s}\big|CC_1\varepsilon t^{-1/2}s^{-1}\cdot Z^{I_2}\del_{\alpha}\del_{\beta}v_{\jc}\big|^2dx\bigg)^{1/2}
\\
&\quad+\sum_{\alpha,\beta,\jc,|I_2|\geq2 \atop I_1+I_2=I}
\bigg(\int_{H_s}\big|K(s/t)Z^{I_1}\del_{\alpha}w_j\cdot (t/s)CC_1\varepsilon t^{-3/2}s^{\delta}\big|^2dx\bigg)^{1/2}
\\
&\leq C(C_1\varepsilon)^2 s^{-3/2+2\delta}.
\endaligned
\end{equation}
The second term in right-hand-side of \eqref{proof lem L2 com 3 eq2} is estimated as follows. One notices that in the cone $\Lambda'$
$$
\big|[\del_{\alpha}\del_{\beta},Z^I]v_{\jc}\big|\leq C\sum_{\alpha,\beta,\atop |J|\leq |I|}\big|\del_{\alpha'}\del_{\beta'}Z^Jv_{\jc}\big|.
$$
So
$$
\aligned
\bigg(\int_{H_s}\big|G_{\ih}^{\jcr\alphar\betar}[\del_{\alphar}\del_{\betar},Z^I]v_{\jcr}\big|^2dx\bigg)^{1/2}
&\leq C\sum_{j,\alpha,\beta,\gamma\atop |J|\leq 4}
\bigg(\int_{H_s}\big|K\del_{\gamma}w_j\del_{\alpha'}\del_{\beta'}Z^Jv_{\jc}\big|^2dx\bigg)^{1/2}
\\
&\leq KC(C_1\varepsilon)^2s^{-3/2+2\delta}.
\endaligned
$$

The first term in right hand side of \eqref{proof lem L2 com 3 eq1} is decomposed as follows:
$$
[\Gt_i^{\jhr\alphar\betar}\del_{\alphar}\del_{\betar},Z^I]u_{\jhr} =
[A_i^{\jhr\alphar\betar\gammar\khr}\del_{\gammar}u_{\khr}\del_{\alphar}\del_{\betar},Z^I]u_{\jhr}
+[B_i^{\jhr\alphar\betar\kcr}v_{\kcr}\del_{\alphar}\del_{\betar},Z^I]u_{\jhr}
+[A_i^{\jhr\alphar\betar\gammar\kcr}\del_{\gammar}v_{\khr}\del_{\alphar}\del_{\betar},Z^I]u_{\jhr}
$$
The last two terms are finite linear combinations of $Z^{I_1}v_{\jc}Z^{I_2}\del_{\alpha}\del_{\beta}u_{\kh}$
and $Z^{I_1}\del_{\gamma}v_{\jc}Z^{I_2}\del_{\alpha}\del_{\beta}u_{\kh}$ with $I_1+I_2 = I$ and $|I_1|\geq 1$.
As in \eqref{proof lem L2 com 3 eq3}, their $L^2$ norms on $H_s$ can be estimated by $C(C_1\varepsilon)^2 s^{-3/2+2\delta}$. The first term can
be written under one-frame:
\begin{equation}\label{proof lem L2 com 3 eq4}
[A_i^{\jhr\alphar\betar\gammar\khr}\del_{\gammar}u_{\khr}\del_{\alphar}\del_{\betar},Z^I]u_{\jhr}
= [\Au_i^{\jhr\alphar\betar\gammar\khr}\delu_{\gammar}u_{\khr}\delu_{\alphar}\delu_{\betar},Z^I]u_{\jhr}
+ [\Au_i^{\jhr\alphar\betar\gammar\khr}\delu_{\gammar}u_{\khr}(\delu_{\alphar}\Phi_{\betar}^{\betar'})\del_{\betar'},Z^I]u_{\jhr}
\end{equation}
Recall that by null conditions \eqref{main conditions null} and lemma \ref{pre lem one frame}, one has
$$
\big|Z^I\Au_i^{\jh000\gamma\kh}\big|\leq C(I)(s/t)^2.
$$
The first term can be controlled as follows:
\begin{align*}
&\big|[\Au_i^{\jhr\alphar\betar\gammar\khr}\delu_{\gammar}u_{\khr}\delu_{\alphar}\delu_{\betar},Z^I]u_{\jhr}\big|
\\
&\leq \big|[\Au_i^{\jh000\khr}\delu_0u_{\khr}\delu_0\delu_0,Z^I]u_{\jhr}\big|
\\
&\quad + \big|[\Au_i^{\jhr\ar\betar\gammar\khr}\delu_{\gammar}u_{\khr}\delu_{\ar}\delu_{\betar},Z^I]u_{\jhr}\big|
+\big|[\Au_i^{\jhr\alphar\br\gammar\khr}\delu_{\gammar}u_{\khr}\delu_{\alphar}\delu_{\br},Z^I]u_{\jhr}\big|
+\big|[\Au_i^{\jhr\alphar\betar\cred\khr}\delu_{\cred}u_{\khr}\delu_{\ar}\delu_{\betar},Z^I]u_{\jhr}\big|.
\end{align*}
Then as in \eqref{proof lem L2 com 3 eq3}, its $L^2$ norm on $H_s$ is controlled
by $C(C_1\varepsilon)^2s^{-3/2+2\delta}$.

To estimate the $L^2$ norm on $H_s$ of the last term in right-hand-side of \eqref{proof lem L2 com 3 eq4}, one recalls that from definition,
$\big|Z^I\del_{\alpha}\Phi_{\beta}^{\beta'}\big|\leq Ct^{-1}$. Taking this into consideration and run the same method of \eqref{proof lem L2 com 3 eq3}
one sees that its $L^2$ norm on $H_s$ is also controlled by $C(C_1\varepsilon)^2s^{-5/2+\delta}$. Then finally the lemma is proved when taking the assumption
$\delta<1/6$ into account.
\end{proof}

\begin{lemma}\label{proof lem L2 F 4}
Let $\{w_j\}$ be regular solution of \eqref{main eq main}.
Suppose that \eqref{proof used decay}, \eqref{proof used energy estimates} and \eqref{proof used energy estimates} hold. Let $\mathcal {A}_4$ be any of the
following terms:
$$
v_{\kc}v_{\jc},\quad v_{\kc}\del_{\alpha}w_j,\quad \del_{\alpha}w_j\del_{\beta}w_k.
$$
Then for any $|I^*|\leq 4$,
$$
\bigg(\int_{H_s}\big|Z^{I^*}\mathcal{A}_4\big|^2dx\bigg)^{1/2} \leq C(C_1\varepsilon)^2 s^{-1 + \delta}.
$$

Especially, for any $|I^*|\leq 4$,
$$
\bigg(\int_{H_s}\big|Z^{I^*}F_i\big|\bigg)^{1/2}\leq C(C_1\varepsilon)^2 s^{-1+\delta}.
$$
\end{lemma}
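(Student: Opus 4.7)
The plan is to follow the same scheme as Lemma \ref{proof lem L2 F 3}, but one derivative order higher and with the correspondingly weaker target rate $s^{-1+\delta}$. First I would apply Leibniz's rule to write $Z^{I^*}\mathcal{A}_4$ as a finite sum of products $Z^{I_1}(\cdot)\,Z^{I_2}(\cdot)$ with $I_1+I_2=I^*$; since $|I^*|\leq 4$ one can always arrange $\min(|I_1|,|I_2|)\leq 2$. The factor with the smaller index will be put into $L^{\infty}(H_s)$ via the pointwise decay estimates \eqref{proof used decay}, while the factor with the larger index (still $\leq 4$) will be controlled in $L^2(H_s)$ via \eqref{proof used energy estimates}.

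The terms $v_{\kc}v_{\jc}$ and $v_{\kc}\del_{\alpha}w_j$ are the easy cases: every Klein--Gordon factor carries the fast pointwise decay $|Z^{I_1}v_{\kc}|\leq CC_1\varepsilon\, t^{-3/2}s^{\delta}$, and on $H_s$ one has $t\geq s$, so a direct substitution gives an $L^2$ bound of order $(C_1\varepsilon)^2 s^{-3/2+2\delta}$, which is already stronger than claimed. The only nontrivial case is $\del_{\alpha}w_j\,\del_{\beta}w_k$ when both factors are wave components. There the best pointwise bound is $|Z^{I_1}\del_{\alpha}u_{\jh}|\leq CC_1\varepsilon\, t^{-1/2}s^{-1}$, and \eqref{proof used energy estimates} only controls $\|(s/t)Z^{I_2}\del_{\beta}u_{\kh}\|_{L^2(H_s)}$, not $\|Z^{I_2}\del_{\beta}u_{\kh}\|_{L^2(H_s)}$. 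This is the step where the main obstacle sits, since there is no null condition available here to eliminate the ``bad'' direction $\delu_0$.

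I would overcome it using the elementary but crucial weight identity that holds in $\Lambda'$: since $r\leq t-1$, one has $s^2=(t-r)(t+r)\geq t+r\geq t$, that is $t\leq s^2$. Consequently
$$
t^{-1}|Z^{I_2}\del_{\beta}u_{\kh}|^2 \;=\; \frac{t}{s^2}\cdot\frac{s^2}{t^2}|Z^{I_2}\del_{\beta}u_{\kh}|^2 \;\leq\; \frac{s^2}{t^2}|Z^{I_2}\del_{\beta}u_{\kh}|^2,
$$
so $\int_{H_s} t^{-1}|Z^{I_2}\del_{\beta}u_{\kh}|^2\,dx\leq (CC_1\varepsilon s^{\delta})^2$. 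Combined with the pointwise factor $CC_1\varepsilon\, t^{-1/2}s^{-1}$ from $Z^{I_1}\del_{\alpha}u_{\jh}$, this gives precisely $\|Z^{I_1}\del_{\alpha}u_{\jh}\,Z^{I_2}\del_{\beta}u_{\kh}\|_{L^2(H_s)}\leq C(C_1\varepsilon)^2 s^{-1+\delta}$, as required. The mixed cases $(\del u,\del v)$ and $(\del v,\del v)$ are handled by the same pattern and yield strictly better powers of $s$.

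Finally, to obtain the $F_i$ estimate I would invoke the structural conditions \eqref{main conditions no-u}: they force every $Q_i^{\alphar\jr\kr}w_{\kr}\del_{\alphar}w_{\jr}$ contribution in $F_i$ to have at least one Klein--Gordon factor $v_{\kc}$, and every $R_i^{\jr\kr}w_{\jr}w_{\kr}$ contribution to be of the pure $v_{\jc}v_{\kc}$ type. Consequently $F_i$ is a finite linear combination of terms of type $\mathcal{A}_4$, and the bound follows from the three cases above. In contrast to the $F_{\ih}$ estimate in Lemma \ref{proof lem L2 F 3}, the weaker target rate $s^{-1+\delta}$ here accommodates the unrestricted $\del u\,\del u$ contribution without invoking the null condition \eqref{main conditions null} or Lemma \ref{pre lem one frame}, so the argument is in fact slightly shorter at this order.
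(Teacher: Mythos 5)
Your proposal is correct and follows essentially the same route as the paper: Leibniz splitting with the lower-order factor in $L^{\infty}(H_s)$ via \eqref{proof used decay} and the higher-order factor in $L^2(H_s)$ via \eqref{proof used energy estimates}, with the wave--wave term $\del_{\alpha}u_{\jh}\del_{\beta}u_{\kh}$ handled by absorbing the $(t/s)$ weight through $t\leq s^2$ on $H_s\cap\Lambda'$ (the paper does this implicitly when it bounds $t^{1/2}s^{-2}$ by $s^{-1}$), and the $F_i$ bound reduced to the $\mathcal{A}_4$ terms via \eqref{main conditions no-u}. The only bookkeeping nuance is that for $|I_1|=2$ the pointwise bound carries an extra $s^{\delta}$, which is harmless because the partner factor then has order $\leq 2$ and its energy bound loses its $s^{\delta}$, so the product is still $C(C_1\varepsilon)^2 s^{-1+\delta}$.
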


\begin{proof}
The $L^2$ estimates of the terms $\mathcal {A}_4$ are nearly the same of those in the proof of lemma \ref{proof lem L2 F 3}. One will only prove the case where
$\mathcal {A}_4 = \del_{\alpha}w_j\del_{\beta}w_k$.
To estimate the term consulting $\del_{\alpha}v_{\jc}\del_{\beta}u_{\kh}$
$$
\aligned
&\bigg(\int_{H_s}\big|Z^{I^*}\big(\del_{\alpha}v_{\jc}\del_{\beta}u_{\kh}\big)\big|^2dx\bigg)^{1/2}
\\
&\leq \sum_{|I^*_1|\leq 2\atop I^*_1+I^*_2=I^*}\bigg(\int_{H_s}\big|Z^{I^*_1}\del_{\alpha}v_{\jc}Z^{I^*_2}\del_{\beta}u_{\kh}\big|^2dx\bigg)^{1/2}
     +\sum_{|I^*_2|\leq 1\atop I^*_1+I^*_2=I^*}\bigg(\int_{H_s}\big|Z^{I^*_1}\del_{\alpha}v_{\jc}Z^{I^*_2}\del_{\beta}u_{\kh}\big|^2dx\bigg)^{1/2}
\\
&\leq CC_1\varepsilon \sum_{|I^*_1|\leq 2\atop I^*_1+I^*_2=I^*}\bigg(\int_{H_s}\big|t^{-3/2}s^{\delta}(t/s)\big|^2\cdot\big|(s/t)Z^{I^*_2}\del_{\beta}u_{\kh}\big|^2dx\bigg)^{1/2}
\\
&\quad+CC_1\varepsilon\sum_{|I^*_2|\leq 1\atop I^*_1+I^*_2=I^*}\bigg(\int_{H_s}\big|(s/t)Z^{I^*_1}\del_{\alpha}v_{\jc}\big|^2(t/s)^2\cdot\big|t^{-1/2}s^{-1}\big|^2dx\bigg)^{1/2}
\\
&\leq C(C_1\varepsilon)^2 (s^{-3/2+2\delta} + s^{-1+\delta}).
\endaligned
$$

The terms $\del_{\alpha}u_{\jh}\del_{\beta}u_{\kh}$ are estimated as follows:

$$
\aligned
&\bigg(\int_{H_s}\big|Z^{I^*}\big(\del_{\alpha}u_{\jh}\del_{\beta}u_{\kh}\big)\big|^2dx\bigg)^{1/2}
\\
&\leq \sum_{|I^*_1|\leq 1\atop I^*_1+I^*_2 = I^*}\bigg(\int_{H_s}\big|Z^{I^*_1}\del_{\alpha}u_{\jh}Z^{I^*_2}\del_{\beta}u_{\kh}\big|^2dx\bigg)^{1/2}
+     \sum_{|I^*_2|\leq 1\atop I^*_1+I^*_2 = I^*}\bigg(\int_{H_s}\big|Z^{I^*_1}\del_{\alpha}u_{\jh}Z^{I^*_2}\del_{\beta}u_{\kh}\big|^2dx\bigg)^{1/2}
\\
&\quad + \sum_{|I^*_2|=2 \atop I^*_1+I^*_2 = I^*}\bigg(\int_{H_s}\big|Z^{I^*_1}\del_{\alpha}u_{\jh}Z^{I^*_2}\del_{\beta}u_{\kh}\big|^2dx\bigg)^{1/2}
\\
&\leq \sum_{|I^*_1|\leq 1\atop I^*_1+I^*_2 = I^*}\bigg(\int_{H_s}\big|CC_1\varepsilon t^{-1/2}s^{-1}\big|^2\cdot(t/s)^2\big|(s/t)Z^{I^*_2}\del_{\beta}u_{\kh}\big|^2dx\bigg)^{1/2}
\\
&\quad+\sum_{|I^*_2|\leq 1\atop I^*_1+I^*_2 = I^*}\bigg(\int_{H_s}\big|(s/t)Z^{I^*_1}\del_{\alpha}u_{\jh}\big|^2(t/s)^2\cdot\big|CC_1\varepsilon t^{-1/2}s^{-1}\big|^2 dx\bigg)^{1/2}
\\
&\quad+\sum_{|I^*_1|=2 \atop I^*_1+I^*_2 = I^*}\bigg(\int_{H_s}\big|(s/t)Z^{I^*_1}\del_{\alpha}u_{\jh}\big|^2(t/s)^2\cdot\big|CC_1\varepsilon t^{-1/2}s^{-1+\delta}\big|^2dx\bigg)^{1/2}
\\
&\leq C(C_1\varepsilon)^2 s^{-1+\delta}.
\endaligned
$$

The terms $\del_{\alpha}v_{\jc}\del_{\beta}v_{\kc}$ are estimated as follows:
$$
\aligned
&\bigg(\int_{H_s}\big|Z^{I^*}\big(\del_{\alpha}v_{\jc}\del_{\beta}v_{\kc}\big)\big|^2dx\bigg)^{1/2}
\\
&\leq \sum_{|I^*_1|\leq 2\atop I^*_1+I^*_2 = I^*}\bigg(\int_{H_s}\big|Z^{I^*_1}\del_{\alpha}v_{\jc}Z^{I^*_2}\del_{\beta}v_{\kc}\big|^2dx\bigg)^{1/2}
+     \sum_{|I^*_2|\leq 1\atop I^*_1+I^*_2 = I^*}\bigg(\int_{H_s}\big|Z^{I^*_1}\del_{\alpha}v_{\jc}Z^{I^*_2}\del_{\beta}v_{\kc}\big|^2dx\bigg)^{1/2}
\\
&\leq \sum_{|I^*_1|\leq 2\atop I^*_1+I^*_2 = I^*}\bigg(\int_{H_s}\big|CC_1\varepsilon t^{-3/2}s^{\delta}\big|^2\cdot(t/s)^2\big|(s/t)Z^{I^*_2}\del_{\beta}v_{\kh}\big|^2dx\bigg)^{1/2}
\\
&\quad+\sum_{|I^*_2|\leq 1\atop I^*_1+I^*_2 = I^*}\bigg(\int_{H_s}\big|(s/t)Z^{I^*_1}\del_{\alpha}u_{\jh}\big|^2(t/s)^2\cdot\big|CC_1\varepsilon t^{-3/2}s^{\delta}\big|^2 dx\bigg)^{1/2}
\\
&\leq C(C_1\varepsilon)^2 s^{-3/2+2\delta}\leq C(C_1\varepsilon)^2 s^{-1+\delta}.
\endaligned
$$

One notices that $F_i$ is a finite linear combination of $\del_{\alpha}w_j\del_{\beta}w_k,\,v_{\jc}\del_{\alpha}w_k$ and $v_{\jc}v_{\kc}$. By the estimates just proved,
the last estimate on $Z^IF_i$ is trivial.
\end{proof}
\begin{lemma}\label{proof lem L2 com 4}
Let $\{w_j\}$ be regular solution of \eqref{main eq main}. Suppose that \eqref{proof used energy estimates}, \eqref{proof used energy estimates 2-order} and
\eqref{proof used decay} hold. Then for any $|I^*|\leq 4$,
$$
\bigg(\int_{H_s}\big|[\Gt_i^{\jr\alphar\betar}\del_{\alphar}\del_{\betar},Z^{I^*}]w_{\jr}\big|^2dx\bigg)^{1/2}
\leq C(C_1\varepsilon)^2s^{-1+\delta}.
$$
\end{lemma}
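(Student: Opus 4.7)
The plan is to mirror Lemma \ref{proof lem com 3} one order higher, with the weaker target rate $s^{-1+\delta}$ in place of $s^{-3/2+2\delta}$. I would first reuse the splitting \eqref{proof lem L2 com 3 eq1},
$$
[\Gt_i^{\jr\alphar\betar}\del_{\alphar}\del_{\betar},Z^{I^*}]w_{\jr}
= [\Gt_i^{\jhr\alphar\betar}\del_{\alphar}\del_{\betar},Z^{I^*}]u_{\jhr}
+ [G_i^{\jcr\alphar\betar}\del_{\alphar}\del_{\betar},Z^{I^*}]v_{\jcr},
$$
and handle the Klein-Gordon block via the decomposition \eqref{proof lem L2 com 3 eq2}, using $|Z^JG_i^{j\alpha\beta}|\leq C(J)K\sum|Z^J\del_\gamma w_k|$ together with the second-order commutator estimate \eqref{pre lem commutator second-order}. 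For the wave block I would substitute $\Gt_i^{\jhr\alphar\betar}=A_i^{\jhr\alphar\betar\gammar\khr}\del_{\gammar}u_{\khr}+B_i^{\jhr\alphar\betar\kcr}v_{\kcr}+A_i^{\jhr\alphar\betar\gammar\kcr}\del_{\gammar}v_{\kcr}$ (the $B_i^{\jh\cdots\kh}u_{\khr}$ term having been absorbed into $\Gt$ by definition).

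The two $v_{\kc}$-coefficient pieces in the wave block are treated exactly like the Klein-Gordon block. For the genuine quasilinear piece $[A_i^{\jhr\alphar\betar\gammar\khr}\del_{\gammar}u_{\khr}\del_{\alphar}\del_{\betar},Z^{I^*}]u_{\jhr}$ I would re-express things under the one-frame as in \eqref{proof lem L2 com 3 eq4}; the null condition \eqref{main conditions null} combined with Lemma \ref{pre lem one frame} then yields $|Z^{I^*}\Au_i^{\jh 0 0\gamma\kh}|\leq C(I^*)(s/t)^2$, neutralizing the only combination in which two bad $\delu_0$ derivatives could coexist. The remainder term with $\delu_{\alphar}\Phi_{\betar}^{\betar'}$ carries an extra $t^{-1}$ after differentiation and is benign, exactly as in Lemma \ref{proof lem com 3}.

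The bulk of the proof is then bilinear bookkeeping: for each product $Z^{I^*_1}(\cdot)Z^{I^*_2}(\cdot)$ with $I^*_1+I^*_2=I^*$, I would place the factor of lower order ($|I^*_j|\leq 2$) in $L^\infty$ via \eqref{proof used decay} and the other in $L^2$ via \eqref{proof used energy estimates} and \eqref{proof used energy estimates 2-order}. These products are of exactly the forms already bounded in Lemma \ref{proof lem L2 F 4} and in the three sub-estimates of Lemma \ref{proof lem com 3}, so each contribution is a near-verbatim repetition of a computation carried out there.

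The main obstacle, and the reason the rate drops from $s^{-3/2+2\delta}$ down to $s^{-1+\delta}$, is the worst-case distribution when $|I^*|=4$: at least one factor must carry $\geq 3$ derivatives, for which the only available $L^2$ control grows like $CC_1\varepsilon s^{\delta}$, while the opposite factor contributes at best the pointwise bound $CC_1\varepsilon t^{-1/2}s^{-1}$ coming from the $|J|\leq 1$ line of \eqref{proof used decay}. After the standard $(s/t)/(t/s)$ trade used to absorb the weighted-energy factor, this product integrates on $H_s$ to exactly $C(C_1\varepsilon)^2 s^{-1+\delta}$ and no better; this is the sole genuinely new case compared with Lemma \ref{proof lem com 3}. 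Summing over all contributions closes the estimate, and the restriction $\delta<1/6$ is not needed at this step.
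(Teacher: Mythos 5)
Your proposal is correct and takes essentially the same route as the paper: the paper's proof of this lemma is literally a repetition of the argument for Lemma \ref{proof lem com 3} at one order higher (same wave/Klein--Gordon splitting, same one-frame rewriting with Lemma \ref{pre lem one frame} killing the all-zero coefficient, same $L^\infty\times L^2$ bookkeeping via \eqref{proof used decay}, \eqref{proof used energy estimates} and \eqref{proof used energy estimates 2-order}). Your identification of the source of the weaker rate also matches the paper's, which attributes the drop to $s^{-1+\delta}$ to the top-order factors (e.g.\ $Z^{I^*_2}\del_{\alpha}\del_{\beta}v_{\jc}$ with $|I^*_2|=3$) that are only controlled in weighted $L^2$ by the order-four energy $\sim s^{\delta}$, paired with the pointwise bound $t^{-1/2}s^{-1}$ on the low-order factor.
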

\begin{proof}
The proof is exactly the same to that of lemma \ref{proof lem com 3}. The only thing one should pay attention to is that when $|I^*|=3$ the decay estimates and $L^2$ estimates on
$Z^{I^*}\del_{\alpha}\del_{\beta}v_{\jc}$ provided by
\eqref{proof used decay} and \eqref{proof used energy estimates} is not as good as in the case where $|I^*|\leq 2$ which is the case in the proof of lemma
\ref{proof lem com 3}. So here one has only a decay rate as $s^{-1+\delta}$.
\end{proof}

\subsection{Part three -- Energy and decay estimates of ``bad'' derivatives}
In this part one will give the energy and decay estimates of ``bad" second-order derivatives, which are the terms $\delu_0\delu_0Z^I u_{\jh}$.
The following result is an expression of $\delu_0\delu_0Z^I u_{\ih}$ given by other ``good'' terms, which is an algebraic transform of \eqref{main eq main}.
\begin{lemma}\label{proof lem 00 expression}
Let $\{w_i\}$ be solution of \eqref{main eq main}, then for any multi-index $I$ the following identity holds
\begin{equation}\label{proof 00 expression}
\aligned
&\quad (s/t)^2 \big(\delu_0\delu_0 Z^I u_{\ih} + (t/s)^2u_{\khr} \Bu_{\ih}^{\jhr00\khr} \delu_0\delu_0 Z^I u_{\jhr}\big)
\\
&= Z^IF_{\ih} - Z^I\big(\Gt_{\ih}^{\jr\alphar\betar}\del_{\alphar\betar}w_{\jr}\big) + [\Bu_{\ih}^{\jhr00\khr}u_{\khr}\delu_{00},Z^I]u_{\jhr}
\\
&\quad -
Z^I\big(\Bu_{\ih}^{\jhr\ar\br\khr}u_{\khr}\delu_{\ar\br} u_{\jhr} + \Bu_{\ih}^{\jhr\ar0\khr}u_{\khr}\delu_{\ar0} u_{\jhr} + \Bu_{\ih}^{\jhr0\br\khr}u_{\khr}\delu_{0\br} u_{\jhr}\big)
\\
&\quad -
\big(\underline{m}^{\ar\br}\delu_{\ar\br}Z^Iu_{\ih} + \underline{m}^{\ar0}\delu_{\ar0}Z^Iu_{\ih} + \underline{m}^{0\br}\delu_{0\br}Z^Iu_{\ih}\big)
\\
&\quad +
Z^I\big(\Bu_{\ih}^{\jhr\alphar\betar\khr}u_{\khr}(\delu_{\alphar}\Phi_{\betar}^{\betar'})\del_{\betar'}u_{\jhr}\big) + \underline{m}^{\alphar\betar}(\delu_{\alphar}\Phi_{\betar}^{\betar'})\del_{\betar'}Z^Iu_{\ih}
\\
& =:\mathcal{R}_{\ih}.
\endaligned
\end{equation}
Furthermore, there exists a universal constant $C^*$ such that when $|u_{\ih}|\leq K^{-1}C^*\leq 1$, the following estimate holds
\begin{equation}\label{proof basic estimate 00}
\big|(s/t)^2 \delu_{00}Z^Iu_{\ih}\big| \leq C\max_{\ih}|\mathcal {R}_{\ih}|,
\end{equation}
where $C$ is a universal constant
\end{lemma}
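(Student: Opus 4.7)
The identity \eqref{proof 00 expression} is obtained by straightforward algebraic manipulation of the wave component of \eqref{main eq main}. Since $D_{\ih}=0$, that equation reads $\Box u_{\ih} = F_{\ih} - G_{\ih}^{\jr\alphar\betar}\del_{\alphar\betar}w_{\jr}$. The first step is to split the nonlinear principal part as
\begin{equation*}
G_{\ih}^{\jr\alphar\betar}\del_{\alphar\betar}w_{\jr} = \Gt_{\ih}^{\jr\alphar\betar}\del_{\alphar\betar}w_{\jr} + B_{\ih}^{\jhr\alphar\betar\khr}u_{\khr}\del_{\alphar\betar}u_{\jhr},
\end{equation*}
which is legitimate thanks to \eqref{main conditions no-u}: the vanishing of $B_i^{\jc\alpha\beta\kh}$ is precisely what ensures that no $u_{\kh}\del^2 v_{\jc}$ term is hidden in the residual. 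Next, apply $Z^I$ to both sides and move it past $\Box$ using $[Z^I,\Box]=0$. Then \eqref{pre expression of wave under oneframe} applied to $Z^I u_{\ih}$ extracts $(s/t)^2\delu_0\delu_0 Z^I u_{\ih}$ from $\Box Z^I u_{\ih}$, while \eqref{pre frame change of frame} rewrites the residual $B$-term under the one-frame and splits it according to whether $(\alpha,\beta)=(0,0)$ or not. The last step is to commute $Z^I$ across the operator $\Bu_{\ih}^{\jhr00\khr}u_{\khr}\delu_0\delu_0$ acting on $u_{\jhr}$, which produces precisely the commutator contribution $[\Bu_{\ih}^{\jhr00\khr}u_{\khr}\delu_0\delu_0,Z^I]u_{\jhr}$ appearing in $\mathcal{R}_{\ih}$; all remaining pieces of \eqref{proof 00 expression} are the good-derivative and transition-matrix leftovers of the two frame-change identities.

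For the pointwise estimate \eqref{proof basic estimate 00}, set $M_{\ih}:=\delu_0\delu_0 Z^I u_{\ih}$. The identity just established amounts to the linear system
\begin{equation*}
(s/t)^2 M_{\ih} + \Bu_{\ih}^{\jhr 00\khr}u_{\khr} M_{\jhr} = \mathcal{R}_{\ih}.
\end{equation*}
The key observation is that the quadratic form $B_{\ih}^{\jhr\alphar\betar\khr}\xi_{\alphar}\xi_{\betar}$ satisfies the null condition by \eqref{main conditions null}, so lemma \ref{pre lem one frame} (with $|I|=0$) gives $|\Bu_{\ih}^{\jhr00\khr}|\leq CK(s/t)^2$. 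Combined with the smallness hypothesis $|u_{\khr}|\leq K^{-1}C^*$, the off-diagonal coefficient obeys $|\Bu_{\ih}^{\jhr00\khr}u_{\khr}|\leq CC^*(s/t)^2$, and hence
\begin{equation*}
(s/t)^2|M_{\ih}| \leq |\mathcal{R}_{\ih}| + CC^*\max_{\jhr}(s/t)^2|M_{\jhr}|.
\end{equation*}
Taking the maximum over $\ih$ and fixing the universal constant $C^*$ small enough that $CC^*\leq 1/2$, the second term is absorbed to the left, yielding $(s/t)^2|M_{\ih}|\leq 2\max_{\jhr}|\mathcal{R}_{\jhr}|$ as claimed.

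The main obstacle is conceptual rather than computational: the problematic wave-wave coupling $u_{\khr}\Bu_{\ih}^{\jhr00\khr}\delu_0\delu_0 u_{\jhr}$ cannot simply be relegated to $\mathcal{R}_{\ih}$ because $\delu_0\delu_0 u_{\jhr}$ has no a priori good decay. One must recognize that the null condition supplies precisely the $(s/t)^2$ weight needed to match the degenerate factor appearing in front of $\delu_0\delu_0 Z^I u_{\ih}$ in \eqref{pre expression of wave under oneframe}. Once this matching is seen, the linear system is diagonally dominant in $(s/t)^2$ and the inversion is a one-line argument.
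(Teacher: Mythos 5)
Your proof is correct and takes essentially the same route as the paper: the same splitting of $G_{\ih}^{\jr\alphar\betar}\del_{\alphar\betar}w_{\jr}$ into $\Gt$ plus the $B_{\ih}^{\jhr\alphar\betar\khr}u_{\khr}\del_{\alphar\betar}u_{\jhr}$ part, the same passage to the one-frame via \eqref{pre frame change of frame} and \eqref{pre expression of wave under oneframe} after applying $Z^I$, the same commutator bookkeeping, and the same smallness condition on $|u_{\kh}|$ exploiting the null condition through lemma \ref{pre lem one frame}; your explicit absorption with $CC^*\leq 1/2$ is precisely the ``basic linear algebra'' inversion of the diagonally dominant system that the paper performs. (Only a minor attribution quibble: the decomposition itself holds by the definition of $\Gt$ alone, so \eqref{main conditions no-u} is not what legitimizes that step.)
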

\begin{proof}
One can write \eqref{main eq main} under the following form:
$$
\Box u_{\ih} + B_{\ih}^{\jhr\alphar\betar\khr}u_{\khr}\del_{\alphar\betar}u_{\jhr} = F_{\ih} -\Gt_{\ih}^{\jr\alphar\betar}\del_{\alphar\betar}w_{\jr}.
$$
Then write the term $B_{\ih}^{\jhr\alphar\betar\khr}u_{\khr}\del_{\alphar\betar}u_{\jhr}$ under one frame, by \eqref{pre frame change of frame},
$$
\Box u_{\ih} + \Bu_{\ih}^{\jhr\alphar\betar\khr}u_{\khr}\del_{\alphar\betar}u_{\jhr}
= F_{\ih} -\Gt_{\ih}^{\jr\alphar\betar}\del_{\alphar\betar}w_{\jr} + B_{\ih}^{\jhr\alphar\betar\khr}u_{\khr}(\delu_{\alphar}\Phi_{\betar}^{\betar'})\del_{\betar'}u_{\jhr}.
$$
Then derive it with respect to an arbitrary product $Z^I$:
$$
\aligned
&\Box Z^I u_{\ih} + \Bu_{\ih}^{\jhr00\jhr}u_{\khr}\delu_{00}Z^Iu_{\jhr}
\\
& = -Z^I\big(\Bu_{\ih}^{\jhr\ar\br\jhr}u_{\khr}\delu_{\ar\br}u_{\jhr} + \Bu_{\ih}^{\jhr\ar 0\jhr}u_{\khr}\delu_{\ar 0}u_{\jhr} + \Bu_{\ih}^{\jhr0\ar\jhr}u_{\khr}\delu_{0\ar}u_{\jhr}\big)
\\
&\quad + Z^IF_{\ih} - Z^I\big(\Gt_{\ih}^{\jr\alphar\betar}\big(\del_{\alphar\betar} w_{\jr}\big)
+ Z^I\big(B_{\ih}^{\jhr\alphar\betar\khr}u_{\khr}(\delu_{\alphar}\Phi_{\betar}^{\betar'})\del_{\betar'}u_{\jhr}\big)
 + [\Bu_{\ih}^{\jhr00\khr}u_{\khr}\delu_{00},Z^I]u_{\jhr} .
\endaligned
$$
By \eqref{pre expression of wave under oneframe}, one gets \eqref{proof 00 expression}.

Consider the linear algebraic equations given by \eqref{proof 00 expression},
$$
(s/t)^2 \delu_{00}Z^Iu_{\ih} + \big((t/s)^2B_{\ih}^{\jhr00\khr}u_{\khr}\big)\big((s/t)^2\delu_{00}Z^Iu_{\jhr}\big) = \mathcal {R}_{\ih}.
$$
By lemma \ref{pre lem one frame} and \eqref{proof used decay},
$$
\big|(t/s)^2B_{\ih}^{\jhr00\khr}u_{\khr}\big| \leq CK\max_{\kh}|u_{\kh}|,
$$
where $C$ is a universal constant. When $CK\max_{\kh}|u_{\kh}|\leq 1/2$, that is $\max_{\kh}|u_{\ih}|\leq (2KC)^{-1}$,
by basic linear algebra, one has the following estimates:
$$
\big|(s/t)^2 \delu_{00}Z^Iu_{\ih}\big| \leq C\max_{\ih}|\mathcal {R}_{\ih}|,
$$
where $C^*$ is also a universal constant.
\end{proof}
\begin{remark}
In the expression of $\mathcal{R}$, the first term is a linear term while the rest are quadratic terms. The linear part are composed by the ``good'' second-order derivatives so that one can
deduce from here a better decay of $(s/t)^2\del_0\del_0 Z^I u_{\ih}$.
\end{remark}
Now one turns to the decay estimate of $\del_0\del_0 Z^Iu$.
\begin{lemma}\label{proof lem 00 decay}
Let $\{w_i\}$ a regular solution of \eqref{main eq main}. Suppose that \eqref{proof used decay} and \eqref{proof used energy estimates}
hold with $C_1\varepsilon \leq 1$. Then for any $|J|\leq 1$,
$$
\big|(s/t)^2  \delu_0\delu_0 Z^Ju_{\jh}\big|\leq C(K+1)C_1\varepsilon t^{-3/2}s^{-1+2\delta},
$$
and
$$
\big|(s/t)^2  Z^J\delu_0\delu_0 u_{\jh}\big|\leq C(K+1)C_1\varepsilon t^{-3/2}s^{-1+2\delta}.
$$
\end{lemma}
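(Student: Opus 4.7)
The plan is to invoke Lemma \ref{proof lem 00 expression} at order $|I| = |J| \leq 1$ and reduce the desired bound to a pointwise estimate on $\mathcal{R}_{\jh}$. First I check the smallness hypothesis $|u_{\jh}| \leq K^{-1}C^*$ needed for \eqref{proof basic estimate 00}: by Lemma \ref{pre lem u it-self} with $|J| = 0$, combined with the third line of \eqref{proof energy assumption} (so the parameter ``$\varepsilon$'' of that lemma equals $0$), one gets $|u_{\jh}| \leq C C_1\varepsilon\, t^{-3/2} s \leq C C_1\varepsilon\, t^{-1/2}$, which is smaller than $K^{-1}C^*$ once $\varepsilon_0$ is chosen sufficiently small. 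After this, \eqref{proof basic estimate 00} applies and the task reduces to verifying $|\mathcal{R}_{\jh}| \leq C(K+1) C_1 \varepsilon\, t^{-3/2} s^{-1+2\delta}$ for $|I| \leq 1$.

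Each summand of $\mathcal{R}_{\jh}$ in \eqref{proof 00 expression} is then handled pointwise using \eqref{proof used decay} together with the null conditions and the one-frame conversion. The critical piece is the source $Z^I F_{\jh}$: written in the one-frame, its potentially dangerous part $\Pu_{\jh}^{00\jhr\khr}\delu_0 u_{\jhr}\delu_0 u_{\khr}$ is tamed by Lemma \ref{pre lem one frame} and \eqref{main conditions null}, which give $|Z^{I'}\Pu_{\jh}^{00\jh\kh}| \leq C(s/t)^2$; combining this with $|Z^{J'}\del u_{\jh}| \leq CC_1\varepsilon\, t^{-1/2}s^{-1}$ yields a contribution $\leq (C_1\varepsilon)^2 t^{-3}$, which is within the target. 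All other frame components contain at least one $\delu_a u$ and are controlled by the second line of \eqref{proof used decay}, while the mixed terms $Q v \del w$ and $R v v$ use the $t^{-3/2}s^{\delta}$ decay of $v_{\jc}$. The quasi-linear term $Z^I\bigl(\Gt_{\jh}^{\jr\alphar\betar}\del_{\alphar\betar}w_{\jr}\bigr)$ is handled identically: the $v$-sector exploits the Klein-Gordon pointwise and Hessian bounds, whereas the $u$-sector $A_{\jh}^{\jh\cdots\kh}\del u_{\kh}\del^2 u_{\jh}$, after passage to the one-frame, has its $\Au^{\jh 000\kh}$ component gaining the $(s/t)^2$ saving from Lemma \ref{pre lem one frame}, which exactly cancels the $(t/s)^2$ cost inherent in isolating $\delu_0\delu_0$. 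The ``good'' Hessians $\underline{m}^{a\betar}\delu_a\delu_{\betar}Z^I u$ and $\underline{m}^{\alphar b}\delu_{\alphar}\delu_b Z^I u$, together with the terms $Z^I(\Bu u_{\khr}\delu_{ab}u_{\jhr})$ and analogous mixed ones, are bounded directly by the third and fourth lines of \eqref{proof used decay}. The commutator $[\Bu_{\jh}^{\jhr00\khr}u_{\khr}\delu_{00},Z^I]u_{\jhr}$ either places the derivative on the coefficient $\Bu u_{\khr}$ (using the $|\del u|$ decay, the remaining $\delu_{00}u$ being handled by the $|J|=0$ case just established) or produces a commutator $[Z^I,\delu_{00}]u$ which by the identities preceding Lemma \ref{pre lem commutator} reduces to good-derivative Hessians. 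Finally the frame-change residues carry an explicit factor $|\del\Phi| \leq C t^{-1}$ and are strictly lower order.

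For the second estimate $|(s/t)^2 Z^J \delu_0\delu_0 u_{\jh}|$ I commute $Z^J$ past $\delu_0\delu_0 = \del_{tt}$. For $|J| \leq 1$ the commutator vanishes when $Z^J = \del_{\alpha}$ and produces a constant multiple of $\del_a\del_t u_{\jh}$ when $Z^J = H_a$; writing $\del_a = \delu_a - (x^a/t)\delu_0$ converts this into a good-derivative Hessian $\delu_a\delu_0 u$ (controlled by the third line of \eqref{proof used decay}) plus a bounded multiple of $(s/t)^2\delu_0\delu_0 u_{\jh}$, which is handled by the first estimate. The main obstacle I foresee is the null-condition bookkeeping in the quasi-linear piece $\Gt_{\jh}^{\jr\alphar\betar}\del_{\alphar\betar}w_{\jr}$: one must carefully pair the $(s/t)^2$ gain from Lemma \ref{pre lem one frame} against the $(t/s)^2$ loss intrinsic to extracting $\delu_0\delu_0$, all while tracking the extra factor of $u_{\khr}$ supplied by the $B$-coefficient and the Klein-Gordon derivatives appearing in the $G^{\jc}$ component; the remaining items collapse to routine decay counting.
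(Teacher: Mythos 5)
Your proposal follows the paper's own proof essentially verbatim: both reduce the claim to a pointwise estimate of the quantity $\mathcal{R}_{\ih}$ from Lemma \ref{proof lem 00 expression}, bound its terms one by one using \eqref{proof used decay}, the null-form gain of Lemma \ref{pre lem one frame} and the $t^{-1}$ decay of the frame coefficients, handle the commutator $[\Bu_{\ih}^{\jhr 00\khr}u_{\khr}\delu_{00},Z^J]u_{\jhr}$ by induction starting from the $|J|=0$ case, and deduce the second inequality by commuting $Z^J$ with $\del_t\del_t$ and converting to the one-frame via \eqref{proof estimate frame 2nd}. Your explicit check of the smallness hypothesis $|u_{\ih}|\leq K^{-1}C^*$ through Lemma \ref{pre lem u it-self} is a small addition that the paper leaves implicit, but it does not change the argument.
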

\begin{proof}
Take the notation of in lemma \ref{proof lem 00 expression}.
The proof is mainly a $L^{\infty}$ estimate of $\mathcal{R}_{\ih}$.

One notices that $Z^JF_{\ih}$ is a finite linear combination of $\del w_i\del w_j,\, v_{\kc}\del w_j$ and $v_{\kc}v_{\jc}$. By \eqref{proof used decay}, one easily
gets
$$
\big|Z^JF_{\ih}\big| \leq C(C_1\varepsilon)^2\big(t^{-1}s^{-2+2\delta} + t^{-2}s^{-1+2\delta} + t^{-3}s^{2\delta}\big)\leq CK(C_1\varepsilon)^2 t^{-1}s^{-2+2\delta}.
$$
Similarly, $\Gt_{\ih}^{\jr\alphar\betar}\del_{\alphar\betar}w_{\jr}$ is a finite linear combination of $v_{\kc}\del_{\alpha\beta}w_j$ and $\del_{\gamma}w_j\del_{\alpha\beta}w_k$.
By \eqref{proof used decay}
$$
\big|Z^J\big(\Gt_{\ih}^{\jr\alphar\betar}\del_{\alphar\betar}w_{\jr}\big)\big|
\leq CK(C_1\varepsilon)^2\big(t^{-2}s^{-1+2\delta} + t^{-1}s^{-2+2\delta}\big)
\leq CK(C_1\varepsilon)^2 t^{-1}s^{-2+2\delta}.
$$

By lemma \ref{pre lem frame} $\big|Z^J\Bu_{\ih}^{\jhr\alphar\betar\khr}\big| \leq CK$. Then by the last inequality of \eqref{proof used decay}, one has,
$$
\big|Z^I\big(\Bu_{\ih}^{\jhr\ar\betar\khr}u_{\khr}\delu_{\ar\betar}u_{\jhr}\big)\big| + \big|Z^I\big(\Bu_{\ih}^{\jhr\alphar\br\khr}u_{\khr}\delu_{\alphar\br}u_{\jhr}\big)\big|
\leq CK(C_1\varepsilon)^2 t^{-3}s^{\delta}.
$$
Similarly,
$$
\big|\underline{m}^{\ar\betar}\delu_{\ar\betar}Z^Iu_{\ih}\big| + \big|\underline{m}^{\alphar\br}\delu_{\alphar\br}Z^Iu_{\ih}\big| \leq CC_1\varepsilon t^{-3/2}s^{-1+\delta}.
$$

One also notices that $\big|Z^J\delu_{\alpha}\big(\Phi_{\beta}^{\beta'}\big)\big|\leq Ct^{-1}$. Then
$$
\big|Z^J\big(\Bu_{\ih}^{\jhr\alphar\betar\khr}u_{\khr}\big(\delu_{\alphar}\Phi_{\betar}^{\betar'}\big)\del_{\betar'}u_{\jhr} \big)\big|
\leq CKC_1\varepsilon t^{-3}s^{\delta}.
$$
Similarly,
$$
\big|\underline{m}^{\alphar\betar}\big(\delu_{\alphar}\Phi_{\betar}^{\betar'}\big)\del_{\betar'}Z^Ju_{\ih}\big| \leq CC_1\varepsilon t^{-3/2}s^{-1}.
$$

Now one will consider the term $[\Bu_{\ih}^{\jhr00\khr}u_{\khr},Z^J]u_{\jhr}$. In general one has the following decomposition,
$$
[\Bu_{\ih}^{\jhr00\khr}u_{\khr},Z^J]u_{\jhr} =
\sum_{J_1+J_2= J\atop |J_2|\leq |J|-1}Z^{J_1}\big(\Bu_{\ih}^{\jhr00\khr}u_{\khr}\big)Z^{J_2}\delu_{00}u_{\khr}
+\Bu_{\ih}^{\jhr 00 \khr}u_{\khr}[\delu_{00},Z^J]u_{\jhr}
$$
By \eqref{proof used decay} and lemma \ref{pre lem one frame},
$$
\bigg|\sum_{J_1+J_2= J\atop |J_2|\leq |J|-1}Z^{J_1}\big(\Bu_{\ih}^{\jhr00\khr}u_{\khr}\big)Z^{J_2}\delu_{00}u_{\khr}\bigg|
\leq (Ks^2t^{-2})\cdot(CC_1\varepsilon t^{-3/2}s)\cdot \sum_{\kh\atop |J'|\leq |J|-1}|\del_{\alpha\beta}Z^{J'} u_{\kh}|.
$$
Similarly by \eqref{pre lem commutator second-order} the second term can by bounded as follows
$$
\big|\Bu_{\ih}^{\jhr 00 \khr}u_{\khr}[\delu_{00},Z^J]u_{\jhr}\big|
\leq (Ks^2t^{-2})\cdot(CC_1\varepsilon t^{-3/2}s)\cdot \sum_{\kh\atop |J'|\leq |J|-1}\big|\del_{\alpha\beta}Z^{J'}u_{\kh}\big|
$$
Notice that in the cone $\Lambda'$
\begin{equation}\label{proof estimate frame 2nd}
\big|\del_{\alpha\beta}u_{\ih}\big|\leq \sum_{\alpha,\beta}\big|\delu_{\alpha\beta} u_{\ih}\big|,
\end{equation}
by \eqref{proof used decay}, one has
$$
\big|[\Bu_{\ih}^{\jhr00\khr}u_{\khr},Z^J]u_{\jhr}\big|
\leq CK(C_1\varepsilon)^2t^{-3/2}s^{-1+2\delta} + CKC_1\varepsilon t^{-1/2}\cdot (s/t)^2\sum_{\kh\atop |J|\leq |I|-1}\big|\delu_{00}Z^Ju_{\kh}\big|.
$$
One concludes by
$$
\big|(s/t)^2\delu_{00}Z^I u_{\ih}\big|
\leq C(K+1)C_1\varepsilon t^{-3/2}s^{-1+2\delta} + CKC_1\varepsilon t^{-1/2}\cdot(s/t)^2\sum_{\kh\atop |J|\leq|I|-1}\big|\delu_{00}Z^Ju_{\kh}\big|,
$$
when $|J|=0$, the last term in right-hand-side disappears.
Then by induction, the first inequality is proved.

For the second inequality, one observes that it is a trivial result of the first inequality, \eqref{proof estimate frame 2nd} and
\eqref{pre lem commutator second-order}.
\end{proof}

At the end of this section, one will give the $L^2$ estimates of the ``bad" derivatives.
\begin{lemma}\label{proof lem 00 energy estimate}
Let $\{w_i\}$ be solution of \eqref{main eq main} and suppose that \eqref{proof used energy estimates}, \eqref{proof used decay} hold.
Then for any $|I|\leq 2$ the following estimates hold  with $C_1\varepsilon \leq 1$.
$$
\bigg(\int_{H_s}\big|s^3t^{-2}\delu_{00} Z^Iu_{\ih}\big|^2dx\bigg)^{1/2} \leq C(K+1)C_1\varepsilon,
$$
for any $|I^*|\leq 3$
$$
\bigg(\int_{H_s}\big|s^3t^{-2}\delu_{00} Z^{I^*}u_{\ih}\big|^2dx\bigg)^{1/2} \leq C(K+1)C_1\varepsilon s^{\delta}.
$$
\end{lemma}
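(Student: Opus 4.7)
The plan is to apply the algebraic identity \eqref{proof 00 expression} from Lemma \ref{proof lem 00 expression} and then estimate $\mathcal{R}_{\ih}$ in $L^2(H_s)$ term by term, proceeding by induction on $|I|$. Since $s^3 t^{-2}=s\cdot(s/t)^2$, it suffices to establish $\|\mathcal{R}_{\ih}\|_{L^2(H_s)}\leq C(K+1)C_1\varepsilon s^{-1}$ for $|I|\leq 2$ and the analogous $s^{-1+\delta}$ bound for $|I^*|\leq 3$; the pointwise estimate \eqref{proof basic estimate 00} multiplied by $s$ then yields the two desired statements.

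The ``easy'' terms in $\mathcal{R}_{\ih}$ I would dispatch first. The source $Z^I F_{\ih}$ is directly covered by Lemma \ref{proof lem L2 F 3} (resp.\ Lemma \ref{proof lem L2 F 4}) and is quadratically small of order $(C_1\varepsilon)^2 s^{-3/2+2\delta}$, well within the target. The purely Minkowski good second-derivative pieces $\underline{m}^{\ar\br}\delu_{\ar\br}Z^I u_{\ih}+\underline{m}^{\ar 0}\delu_{\ar 0}Z^I u_{\ih}+\underline{m}^{0\br}\delu_{0\br}Z^I u_{\ih}$ are estimated in $L^2(H_s)$ directly by Lemma \ref{pre lem decay high order}, giving the sharp $CC_1\varepsilon s^{-1}$. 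The two transition-matrix families $Z^I\!\big(\Bu^{\jhr\alphar\betar\khr}u_{\khr}(\delu_{\alphar}\Phi_{\betar}^{\betar'})\del_{\betar'}u_{\jhr}\big)$ and $\underline{m}^{\alphar\betar}(\delu_{\alphar}\Phi_{\betar}^{\betar'})\del_{\betar'}Z^I u_{\ih}$ carry the additional $t^{-1}$ gain from $|Z^J\delu_{\alpha}\Phi_{\beta}^{\beta'}|\leq Ct^{-1}$ and are strictly better than needed.

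The more delicate ingredient is $Z^I(\Gt^{\jr\alphar\betar}\del_{\alphar\betar}w_{\jr})$. I would split it into a commutator part bounded by Lemma \ref{proof lem com 3} (resp.\ Lemma \ref{proof lem com 4}) and a principal part $\Gt^{\jr\alphar\betar}\del_{\alphar\betar}Z^I w_{\jr}$. Passing to the one frame via \eqref{pre frame change of frame}, the only potentially dangerous component of the principal part is the one with both frame indices equal to $0$ and $w_{\jr}=u_{\jhr}$; for this the null conditions \eqref{main conditions null} together with Lemma \ref{pre lem one frame} supply the crucial factor $(s/t)^2$ on the coefficient, which combined with the pointwise bound on $\delu_{0}\delu_{0}Z^I u_{\jhr}$ furnished by Lemma \ref{proof lem 00 decay} and the $L^\infty$ control of $\del w$ from \eqref{proof used decay} closes the estimate. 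Every other frame component of the principal part has at least one spatial index, so it is a product of an $L^\infty$ first-derivative (or $v$) factor and an $L^2$ good second-derivative controlled by Lemma \ref{pre lem decay high order}. The same $L^\infty\cdot L^2$ scheme controls the three $Z^I\!\big(\Bu^{\jhr\ar\br\khr}u_{\khr}\delu_{\ar\br}u_{\jhr}\big)$-type pieces, where the pointwise decay of $u$ from Lemma \ref{pre lem u it-self} plays the role of the $L^\infty$ factor.

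The main obstacle is the commutator $[\Bu^{\jhr 00\khr}u_{\khr}\delu_{0}\delu_{0},Z^I]u_{\jhr}$, which is the only term in $\mathcal{R}_{\ih}$ that feeds back into the very quantity we are trying to estimate. Expanding by Leibniz together with \eqref{pre lem commutator second-order}, it is a finite sum of pieces of the form $Z^{I_1}(\Bu^{\jhr 00\khr}u_{\khr})\cdot Z^{I_2}\delu_{0}\delu_{0}u_{\jhr}$ with $|I_2|\leq|I|-1$ together with $\Bu^{\jhr 00\khr}u_{\khr}[\delu_{0}\delu_{0},Z^I]u_{\jhr}$, each involving $\delu_{0}\delu_{0}Z^J u$ only at order $|J|\leq|I|-1$. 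For these I invoke the present lemma at the previous step as the inductive hypothesis; the $(s/t)^2$ factor extracted from $\Bu^{\jhr 00\khr}$ via Lemma \ref{pre lem one frame}, combined with the smallness $|u_{\kh}|\leq CC_1\varepsilon s^{-1/2+\delta}$ from Lemma \ref{pre lem u it-self}, is precisely what absorbs the inductive contribution without enlarging the constant $C(K+1)$. The base case $|I|=0$ contains no commutator and reduces to the preceding direct estimates, so the induction closes provided $C_1\varepsilon$ is small enough.
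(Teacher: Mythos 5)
Your overall strategy is the same as the paper's: reduce everything to an $L^2(H_s)$ bound on $\mathcal{R}_{\ih}$ via \eqref{proof 00 expression}--\eqref{proof basic estimate 00}, dispose of $Z^IF_{\ih}$, the $\Phi$-terms and the good Minkowski second derivatives directly, and close the feedback from $[\Bu_{\ih}^{\jh 00\kh}u_{\kh}\delu_0\delu_0,Z^I]u_{\jh}$ by induction on $|I|$, since it only involves $\delu_0\delu_0 Z^Ju$ at orders $|J|\leq|I|-1$ with a small prefactor; this is exactly the paper's scheme. Your one organizational deviation --- writing $Z^I\big(\Gt_{\ih}^{\jr\alphar\betar}\del_{\alphar\betar}w_{\jr}\big)$ as the commutator $[\Gt_{\ih}^{\jr\alphar\betar}\del_{\alphar}\del_{\betar},Z^I]w_{\jr}$ (already bounded in Lemma \ref{proof lem com 3}, resp.\ Lemma \ref{proof lem L2 com 4}; the label ``proof lem com 4'' you cite does not exist) plus the principal part $\Gt_{\ih}^{\jr\alphar\betar}\del_{\alphar\betar}Z^Iw_{\jr}$ --- is a legitimate variant of the paper's direct Leibniz estimate (its $M_0+\dots+M_3$ decomposition), and it buys some economy by reusing proved lemmas.

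There is, however, a genuine gap at the single most delicate term, the $00$ one-frame component of the principal part, i.e.\ the products of the type $\Au_{\ih}^{\jh 000\kh}\,\delu_0u_{\kh}\,\delu_0\delu_0Z^Iu_{\jh}$: you control the factor $\delu_0\delu_0Z^Iu_{\jh}$ pointwise by Lemma \ref{proof lem 00 decay}, but that lemma is stated (and at this stage provable) only for $|J|\leq 1$, whereas here $|I|\leq 2$ and $|I^*|\leq 3$; a sup-norm bound at these orders would require, through Sobolev, precisely the $L^2$ bounds of the present lemma at still higher order, so the step is circular as written. The repair --- which is what the paper's estimate of its $M_0$ term amounts to --- is to put the top-order factor in $L^2$ instead of $L^\infty$: the null condition \eqref{main conditions null} and Lemma \ref{pre lem one frame} give $\big|\Au_{\ih}^{\jh 000\kh}\big|\leq CK(s/t)^2$; one power $(s/t)$ is spent so that $(s/t)\delu_0\delu_0Z^Iu_{\jh}=(s/t)\del_t\del_tZ^Iu_{\jh}=(s/t)\del_tZ^Ju_{\jh}$ with $|J|=|I|+1\leq 4$ is bounded in $L^2(H_s)$ by the energies in \eqref{proof used energy estimates}, the coefficient factor $\delu_0u_{\kh}$ (and likewise $\delu_cu_{\kh}$, $v_{\kc}$, $\delu_{\gamma}v_{\kc}$ in the other components) is taken in $L^\infty$ from \eqref{proof used decay}, and the spare $(s/t)\leq 1$ is discarded, yielding a contribution $\leq CK(C_1\varepsilon)^2s^{-3/2+\delta}$, well within the target. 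Alternatively one may keep this as a feedback term at the \emph{same} order $|I|$, but then it carries a prefactor $CKC_1\varepsilon s^{-3/2}$ and must be absorbed into the left-hand side for $C_1\varepsilon$ small --- an absorption your induction, which only treats orders $\leq|I|-1$, does not cover as stated. With this correction (and the same remark for the analogous Leibniz pieces where the second-derivative factor carries all of $Z^I$), the rest of your argument matches the paper's proof.
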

\begin{proof}
Similar to that of lemma \ref{proof lem 00 decay}, the proof is mainly a $L^2$ estimate of $\mathcal {R}_{\ih}$.
Lemma \ref{proof lem L2 F 3} (lemma \ref{proof lem L2 F 4}) gives the $L^2$ estimate of $Z^IF_{\ih}$  (respectively $Z^{I^*} F_{\ih}$ ).

$\Gt_{\ih}^{\jr\alphar\betar}\del_{\alphar}\del_{\betar}w_{\jr}$ can be decomposed as follows:
$$
\Gt_{\ih}^{\jr\alphar\betar}\del_{\alphar}\del_{\betar}w_{\jr}
= A_{\ih}^{\jhr\alphar\betar\gammar\khr}\del_{\gammar}u_{\khr}\del_{\alphar}\del_{\betar}u_{\jhr}
+ A_{\ih}^{\jhr\alphar\betar\gammar\kcr}\del_{\gammar}v_{\kcr}\del_{\alphar}\del_{\betar}u_{\jhr}
+ B_{\ih}^{\jhr\alphar\betar\kcr}v_{\kcr}\del_{\alphar}\del_{\betar}u_{\jhr}
+ G_{\ih}^{\jcr\alphar\betar}\del_{\alphar}\del_{\betar}v_{\jcr}.
$$
The last three terms are finite linear combinations of $v_{\jc}\del_{\alpha}\del_{\beta}w_j,\,\del_{\alpha}w_j\del_{\beta}\del_{\gamma}v_{\jc}$ and
$\del_{\alpha}v_{\jc}\del_{\beta}\del_{\gamma}w_j$.  When $|I^*|\leq 3$,
\begin{equation}\label{proof lem 00 energy estimate eq1}
\aligned
&\bigg(\int_{H_s}\big|Z^{I^*}\big(v_{\jc}\del_{\alpha}\del_{\beta}w_j\big)\big|^2\bigg)^{1/2}
\\
&\leq \sum_{|I_1^*|\leq 2\atop I_1^*+I_2^*=I^*}
\bigg(\int_{H_s}\big|Z^{I_1^*}v_{\jc}Z^{I_2^*}\del_{\alpha}\del_{\beta}w_j\big)\big|^2\bigg)^{1/2}
+\bigg(\int_{H_s}\big|Z^{I^*}v_{\jc}\del_{\alpha}\del_{\beta}w_j\big|^2\bigg)^{1/2}
\\
&\leq CKC_1\varepsilon s^{-3/2+\delta}\bigg(\int_{H_s}\big|(s/t)Z^{I_2^*}\del_{\alpha}\del_{\beta}w_j\big|^2\bigg)^{1/2}
+ CKC_1\varepsilon  s^{-3/2+\delta}\bigg(\int_{H_s}\big|Z^{I^*}v_{\jc}\big|^2\bigg)^{1/2}
\\
&\leq CK(C_1\varepsilon)^2 s^{-3/2}.
\endaligned
\end{equation}
Similarly,
$$
\bigg(\int_{H_s}\big|Z^{I^*}\big(\del_{\alpha}w_j\del_{\beta}\del_{\gamma}v_{\jc}\big)\big|^2\bigg)^{1/2}
+\bigg(\int_{H_s}\big|Z^{I^*}\big(\del_{\alpha}v_{\jc}\del_{\beta}\del_{\gamma}w_j\big)\big|^2dx\bigg)^{1/2}
\leq CK(C_1\varepsilon)^2 s^{-3/2}.
$$

Of course, when $|I|\leq 2$,
$$
\aligned
\bigg(\int_{H_s}\big|Z^I(v_{\jc}\del_{\alpha}\del_{\beta}\del_{\gamma}w_j)\big|^2dx\bigg)^{1/2}
&+ \bigg(\int_{H_s}\big|Z^I(\del_{\alpha}v_{\jc}\del_{\beta}w_j)\big|^2dx\bigg)^{1/2}
\\
&+\bigg(\int_{H_s}\big|Z^I\big(\del_{\alpha}w_j\del_{\beta}\del_{\gamma}v_{\jc}\big)\big|^2dx\bigg)^{1/2}
\leq C(C_1\varepsilon)^2 s^{-3/2+2\delta},
\endaligned
$$
The term $A_{\ih}^{\jhr\alphar\betar\gammar\khr}\del_{\gammar}u_{\khr}\del_{\alphar}\del_{\betar}u_{\jhr}$ can be written under one-frame:
$$
A_{\ih}^{\jhr\alphar\betar\gammar\khr}\del_{\gammar}u_{\khr}\del_{\alphar}\del_{\betar}u_{\jhr}
=
 \Au_{\ih}^{\jhr\alphar\betar\gammar\khr}\delu_{\gammar}u_{\khr}\delu_{\alphar}\delu_{\betar}u_{\jhr}
-\Au_{\ih}^{\jhr\alphar\betar\gammar\khr}\delu_{\gammar}u_{\khr}(\del_{\alphar}\Phi_{\betar}^{\betar'})\del_{\betar'}u_{\jhr}
$$
Recall that for any multi-index $J$,
$\big|Z^J\delu_{\alpha}\Phi_{\beta}^{\beta'}\big| \leq C(J)t^{-1}$ and $\big|Z^J\Au_{\ih}^{\jh\alpha\beta\gamma\kh}\big|\leq C(J)K$.
One has for $|I^*|\leq 3$,
$$
\bigg(\int_{H_s}\big|Z^{I^*}\big(\Au_{\ih}^{\jhr\alphar\betar\gammar\khr}\delu_{\gammar}u_{\khr}(\del_{\alphar}\Phi_{\betar}^{\betar'})\del_{\betar'}u_{\jhr}\big)\big|^2dx\bigg)^{1/2}
\leq CK(C_1\varepsilon)^2 s^{-3/2}.
$$
Take the null conditions \eqref{main conditions null} into consideration, for any multi-index $I$,
$$
\big|Z^I\big(\Au_{\ih}^{\jh000\kh}\big)\big|\leq C(I)(s/t)^2.
$$

Then,
$$
\aligned
&\quad\quad\big|Z^I\big(\Au_{\ih}^{\jhr\alphar\betar\gammar\khr}\delu_{\gammar}u_{\khr}\delu_{\alphar}\delu_{\betar}u_{\jhr}\big)\big|
\\
&\leq \sum_{\kh,\jh\atop I_1+I_2+I_3 = I} \big|Z^{I_3}\Au_{\ih}^{\jh000\kh}\big|\big|Z^{I_1}\delu_0 u_{\kh}\big|\big|Z^{I_2}\delu_0\delu_0 u_{\jh}\big|
+     \sum_{\kh,\jh,a,\beta,\gamma\atop I_1+I_2+I_3 = I}\big|Z^{I_3}\Au_{\ih}^{\jh a\beta\gamma\kh}\big|\big|Z^{I_1}\delu_{\gamma} u_{\kh}\big|\big|Z^{I_2}\delu_a\delu_{\beta} u_{\jh}\big|
\\
&\quad + \sum_{\kh,\jh,\alpha,\beta,c\atop I_1+I_2+I_3 = I}\big|Z^{I_3}\Au_{\ih}^{\jh \alpha\beta c\kh}\big|\big|Z^{I_1}\delu_c u_{\kh}\big|\big|Z^{I_2}\delu_{\alpha}\delu_{\beta} u_{\jh}\big|
+        \sum_{\kh,\jh,\alpha,b,\gamma\atop I_1+I_2+I_3 = I}\big|Z^{I_3}\Au_{\ih}^{\jh \alpha b\gamma\kh}\big|\big|Z^{I_1}\delu_{\gamma} u_{\kh}\big|\big|Z^{I_2}\delu_{\alpha}\delu_b u_{\jh}\big|
\\
&\leq CK\sum_{\kh,\jh\atop I_1+I_2+I_3 = I}  (s/t)^2\big|Z^{I_1}\delu_0 u_{\kh}\big|\big|Z^{I_2}\delu_0\delu_0 u_{\jh}\big|
+     CK\sum_{\kh,\jh,a,\beta,\gamma\atop I_1+I_2+I_3 = I}\big|Z^{I_1}\delu_{\gamma} u_{\kh}\big|\big|Z^{I_2}\delu_a\delu_{\beta} u_{\jh}\big|
\\
&\quad + CK\sum_{\kh,\jh,\alpha,\beta,c\atop I_1+I_2+I_3 = I}\big|Z^{I_1}\delu_c u_{\kh}\big|\big|Z^{I_2}\delu_{\alpha}\delu_{\beta} u_{\jh}\big|
+        CK\sum_{\kh,\jh,\alpha,b,\gamma\atop I_1+I_2+I_3 = I}\big|Z^{I_1}\delu_{\gamma} u_{\kh}\big|\big|Z^{I_2}\delu_{\alpha}\delu_b u_{\jh}\big|
\\
&=: M_0 + M_1 + M_2 + M_3.
\endaligned
$$
By the same argument of \eqref{proof lem 00 energy estimate eq1}, one has
$$
\sum_{k=0}^3\bigg(\int_{H_s}\big|M_k\big|^2dx\bigg)^{1/2}\leq C(C_1\varepsilon)^2s^{-3/2 + 2\delta},\quad \text{when} |I|\leq 2,
$$
and
$$
\sum_{k=0}^3\bigg(\int_{H_s}\big|M_k\big|^2dx\bigg)^{1/2}\leq C(C_1\varepsilon)^2s^{-1+\delta},\quad \text{when} |I|\leq 3.
$$
So one concludes by
$$
\bigg(\int_{H_s}\big|Z^{I^*}\big(\Gt_{\ih}^{\jr\alphar\betar}\del_{\alphar\betar}w_{\jr}\big)\big|^2dx\bigg)^{1/2}
\leq KC(C_1\varepsilon)^2s^{-1+\delta}, \quad \text{for} |I^*|\leq 3
$$
and
$$
\bigg(\int_{H_s}\big|Z^I\big(\Gt_{\ih}^{\jr\alphar\betar}\del_{\alphar\betar}w_{\jr}\big)\big|^2dx\bigg)^{1/2} \leq KC(C_1\varepsilon)^2s^{-3/2+2\delta} ,
\quad \text{for} |I|\leq 2.
$$

Recall that $\big|Z^J\delu_{\alpha}\Phi_{\beta}^{\beta'}\big| \leq Ct^{-1}$ and $\big|Z^J\Bu_{\ih}^{\jh\alpha\beta\kh}\big|\leq K$. One has for
$|I^*|\leq 3$.
$$
\bigg(\int_{H_s}\big|Z^{I^*}\big(\Bu_{\ih}^{\jhr\alphar\betar\khr}u_{\khr}\delu_{\alphar}\Phi_{\betar}^{\betar'}\del_{\betar'}u_{\jhr}\big)\big|^2dx\bigg)^{1/2}
\leq C(C_1\varepsilon)^2 s^{-3/2}.
$$
Similarly, for $|I^*|\leq 3$,
$$
\bigg(\int_{H_s}\big|\underline{m}^{\alphar\betar}(\delu_{\alphar}\Phi_{\betar}^{\betar'})\del_{\betar'}Z^{I^*}u_{\ih}\big|^2dx\bigg)
\leq CC_1\varepsilon s^{-1+\delta}
$$
and for $|I|\leq 2$
$$
\bigg(\int_{H_s}\big|\underline{m}^{\alphar\betar}(\delu_{\alphar}\Phi_{\betar}^{\betar'})\del_{\betar'}Z^Iu_{\ih}\big|^2dx\bigg)
\leq CC_1\varepsilon s^{-1}.
$$

Now one turns to the term $[\Bu_{\ih}^{\jhr00\khr}u_{\khr}\delu_{\alphar\betar},Z^I]u_{\jhr}$.
Recall the following decomposition
\begin{equation}\label{proof lem 00 energy estimate eq1}
[\Bu_{\ih}^{\jhr00\khr}u_{\khr},Z^I]u_{\jhr} =
\sum_{I_1+I_2= I\atop |I_2|\leq |I|-1}Z^{I_1}\big(\Bu_{\ih}^{\jhr00\khr}u_{\khr}\big)Z^{I_2}\delu_{00}u_{\jhr}
+\Bu_{\ih}^{\jhr 00 \khr}u_{\khr}[\delu_{00},Z^I]u_{\jhr}
\end{equation}

Notice that
$$
\aligned
\big|[\delu_{00},Z^I]u_{\jh}\big|
&\leq C\sum_{\alpha,\beta\atop |J|\leq |I|-1}|\del_{\alpha\beta}Z^Ju_{\jh}|
\leq C\sum_{\alpha,\beta\atop |J|\leq |I|-1}|\delu_{\alpha\beta}Z^Ju_{\jh}|
\\
&\leq C\sum_{|J|\leq |I|-1}|\delu_{00}Z^Ju_{\jh}| + C\sum_{a,\beta\atop |J|\leq |I|-1}|\delu_{a\beta}Z^Ju_{\jh}|,
\endaligned
$$

The first term in right-hand-side of \eqref{proof lem 00 energy estimate eq1} is estimated as follows: when $|I|\leq 3$,
$$
\aligned
&\sum_{I_1+I_2= I\atop |I_2|\leq |I|-1}\bigg(\int_{H_s}\big|Z^{I_1}\big(\Bu_{\ih}^{\jhr00\khr}u_{\khr}\big)Z^{I_2}\delu_{00}u_{\jhr}\big|^2dx\bigg)^{1/2}
\\
&\leq \sum_{I_1+I_2= I\atop |I_2|\leq |I|-1}\bigg(\int_{H_s}\big|Z^{I_1}\big(\Bu_{\ih}^{\jhr00\khr}u_{\khr}\big)\delu_{00}Z^{I_2}u_{\jhr}\big|^2dx\bigg)^{1/2}
\\
&\quad+\sum_{I_1+I_2= I\atop |I_2|\leq |I|-1,a,\beta}\bigg(\int_{H_s}\big|Z^{I_1}\big(\Bu_{\ih}^{\jhr00\khr}u_{\khr}\big)\delu_{a\beta}Z^{I_2}u_{\jhr}\big|^2dx\bigg)^{1/2}
\endaligned
$$
For the first term:
$$
\aligned
&\sum_{I_1+I_2= I\atop |I_2|\leq |I|-1}\bigg(\int_{H_s}\big|Z^{I_1}\big(\Bu_{\ih}^{\jhr00\khr}u_{\khr}\big)\delu_{00}Z^{I_2}u_{\jhr}\big|^2dx\bigg)^{1/2}
\\
&\leq \sum_{I_1+I_2= I\atop |I_1|=1}+\sum_{I_1+I_2= I\atop |I_2|\leq 1}
 \bigg(\int_{H_s}\big|Z^{I_1}\big(\Bu_{\ih}^{\jhr00\khr}u_{\khr}\big)\delu_{00}Z^{I_2}u_{\jhr}\big|^2dx\bigg)^{1/2}
\\
&\leq \sum_{I_1+I_2= I\atop \jh,|I_1|= 1}\bigg(\int_{H_s}\big|CKC_1\varepsilon t^{-3/2} \cdot s(s/t)^2\delu_{00}Z^{I_2}u_{\jhr}\big|^2dx\bigg)^{1/2}
\\
&\quad +\sum_{I_1+I_2= I\atop \kh,|I_2|\leq 1}\bigg(\int_{H_s}\big|CKC_1\varepsilon t^{-3/2}s^{-1+2\delta}Z^{I_1}u_{\kh}\big|^2dx\bigg)^{1/2}
\\
&\leq CK(C_1\varepsilon)^2s^{-3/2+2\delta} + CKC_1\varepsilon s^{-3/2} \sum_{\jh\atop|J|\leq |I|-1}\bigg(\int_{H_s}\big|s^3t^{-2}\delu_{00}Z^J u_{\jh}\big|\bigg)^{1/2}.
\endaligned
$$

The second term in right-had-side of \eqref{proof lem 00 energy estimate eq1} is estimated as follows: when $|I|\leq 3$,
$$
\aligned
&\quad \bigg(\int_{H_s}\big|\Bu_{\ih}^{\jhr 00 \khr}u_{\khr}[\delu_{00},Z^I]u_{\jhr}\big|^2dx\bigg)^{1/2}
\\
&\leq CKC_1\varepsilon \sum_{a,\beta,\jh\atop |J|\leq |I|-1}\bigg(\int_{H_s}t^{-3}\big|s\delu_{a\beta}Z^Ju_{\jh}\big|^2dx\bigg)^{1/2}
     + CKC_1\varepsilon s^{-3/2}\sum_{\kh\atop |J|\leq |I|-1}\bigg(\int_{H_s}|s^3t^{-2}\delu_{00} Z^J u_{\kh}|^2dx\bigg)^{1/2}
\\
&\leq CKC_1\varepsilon s^{-3/2} \sum_{\jh\atop |J|\leq |I|}E(s,Z^Ju_{\jh})^{1/2}
+  CKC_1\varepsilon s^{-3/2}\sum_{\kh\atop |J|\leq |I|-1}\bigg(\int_{H_s}|s^3t^{-2}\delu_{00}Z^Ju_{\kh}|^2dx\bigg)^{1/2}
\\
&\leq CK(C_1\varepsilon)^2 s^{-3/2+\delta} + CKC_1\varepsilon s^{-3/2}\sum_{\kh\atop |J|\leq |I|-1}\bigg(\int_{H_s}|s^3 t^{-2}\delu_{00}Z^Ju_{\kh}|^2dx\bigg)^{1/2}
\endaligned
$$
To estimate the only linear terms,
$$
\underline{m}^{\ar\br}\delu_{\ar\br}Z^I u_{\ih} +\underline{m}^{\ar0}\delu_{\ar0}Z^Iu_{\ih} + \underline{m}^{0\ar}\del_{0\ar}Z^Iu_{\ih}
$$
one notice that they are ``good derivatives". By using directly the last inequality of \eqref{proof used decay}, one has when $|I|\leq 2$,
$$
\bigg(\int_{H_s}\big|\underline{m}^{\ar\betar}\delu_{\ar\betar}Z^Iu_{\ihr}\big|^2dx\bigg)^{1/2}\leq CC_1\varepsilon s^{-1}.
$$
When $|I^*|\leq 3$,
$$
\bigg(\int_{H_s}\big|\underline{m}^{\ar\betar}\delu_{\ar\betar}Z^{I^*}u_{\ihr}\big|^2dx\bigg)^{1/2}\leq CC_1\varepsilon s^{-1+\delta}.
$$
So finally one gets, when $|I|\leq 2$
$$
\aligned
&\quad\bigg(\int_{H_s}\big|s^2t^{-2}\delu_{00} Z^Iu_{\ih}\big|^2dx\bigg)^{1/2}
\\
&\leq \max_{\jh}||\mathcal {R}_{\jh}||_{L^2(H_s)}
\\
&\leq CK(C_1\varepsilon)^2s^{-3/2+2\delta} + CC_1\varepsilon s^{-1}
+  CKC_1\varepsilon s^{-3/2} \sum_{\alpha,\beta,\kh\atop |J|\leq |I|-1}\bigg(\int_{H_s}|s^3t^{-2}\delu_{00}u_{\kh}|^2dx\bigg)^{1/2}.
\endaligned
$$
By induction, one gets the desired result.
The case where $|I^*|\leq 3$ can be proved similarly, one omits the details.
\end{proof}
\subsection{Part four -- Estimates of other source terms}
\begin{lemma}\label{proof lem L2 com complete}
Let $\{w_i\}$ be regular solution of \eqref{main eq main}. Suppose that \eqref{proof used energy estimates} and \eqref{proof used decay},
then for any $|I^*|\leq 4$, the following estimate holds:
$$
\bigg(\int_{H_s}\big|[G_i^{\jr\alphar\betar}\del_{\alphar}\del_{\betar},Z^{I^*}]w_{\jr}\big|^2dx\bigg)^{1/2}
\leq C(C_1\varepsilon)^2 K s^{-1+\delta}.
$$
For any $|I|\leq 3$ the following estimate holds:
$$
\bigg(\int_{H_s}\big|[G_{\ih}^{\jr\alphar\betar}\del_{\alphar}\del_{\betar},Z^I]w_{\jr}\big|^2dx\bigg)^{1/2}
\leq C(C_1\varepsilon)^2 K s^{-3/2+2\delta}.
$$
\end{lemma}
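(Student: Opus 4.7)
The plan is to decompose the coefficient as $G_i^{\jr\alphar\betar} = \Gt_i^{\jr\alphar\betar} + B_i^{\jhr\alphar\betar\khr}u_{\khr}$ (the correction being supported on wave indices $j=\jhr$), so that
$$
[G_i^{\jr\alphar\betar}\del_{\alphar\betar}, Z^I]w_{\jr} = [\Gt_i^{\jr\alphar\betar}\del_{\alphar\betar}, Z^I]w_{\jr} + [B_i^{\jhr\alphar\betar\khr}u_{\khr}\del_{\alphar\betar}, Z^I]u_{\jhr}.
$$
The first commutator is controlled by lemma \ref{proof lem L2 com 4} for $|I^*|\leq 4$ and by lemma \ref{proof lem com 3} for $|I|\leq 3$ with $i=\ih$, giving exactly the right-hand sides claimed. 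Everything thus reduces to estimating the second commutator.

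I would then pass to the one-frame via \eqref{pre frame change of frame}:
$$
B_i^{\jhr\alphar\betar\khr}u_{\khr}\del_{\alphar\betar}u_{\jhr} = \Bu_i^{\jhr\alphar\betar\khr}u_{\khr}\delu_{\alphar\betar}u_{\jhr} - \Bu_i^{\jhr\alphar\betar\khr}u_{\khr}(\delu_{\alphar}\Phi_{\betar}^{\betar'})\del_{\betar'}u_{\jhr}.
$$
The lower-order tail carries the factor $|\delu_{\alpha}\Phi_{\beta}^{\beta'}|\leq Ct^{-1}$ and is dispatched by the method used for the analogous term in the proof of lemma \ref{proof lem L2 com 3}. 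In the leading one-frame expression, I isolate the $(\alphar,\betar)=(0,0)$ component; any other contribution contains at least one good factor $\delu_a$, so distributing $Z^I$ by Leibniz and splitting each product into an $L^\infty$ factor (carrying at most $p(3)=2$ derivatives, controlled by \eqref{proof used decay}) and an $L^2$ factor (carrying the remaining derivatives, controlled by \eqref{proof used energy estimates} and \eqref{proof used energy estimates 2-order}) reproduces the claimed rates, exactly as in lemmas \ref{proof lem com 3} and \ref{proof lem L2 com 4}.

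The delicate piece is
$$
[\Bu_i^{\jhr00\khr}u_{\khr}\delu_{00}, Z^I]u_{\jhr} = \sum_{\substack{I_1+I_2=I\\ |I_1|\geq 1}} Z^{I_1}\!\big(\Bu_i^{\jhr00\khr}u_{\khr}\big)\, Z^{I_2}\delu_{00}u_{\jhr} + \Bu_i^{\jhr00\khr}u_{\khr}\, [\delu_{00}, Z^I]u_{\jhr},
$$
in which only the bad derivative $\delu_{00}u_{\jh}$ appears. When $i=\ih$, the null condition \eqref{main conditions null} together with lemma \ref{pre lem one frame} yields $|Z^J\Bu_{\ih}^{\jh00\kh}|\leq C(J)K(s/t)^2$, and this $(s/t)^2$ weight couples with the bound $\|s^3t^{-2}\delu_{00}Z^Iu_{\jh}\|_{L^2(H_s)}\leq C(K+1)C_1\varepsilon$ from lemma \ref{proof lem 00 energy estimate} and with the pointwise bound of lemma \ref{proof lem 00 decay} to produce the rate $s^{-3/2+2\delta}$, exactly by the computation at the end of the proof of lemma \ref{proof lem 00 energy estimate}. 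For general $i$ the target $s^{-1+\delta}$ is weaker and the null gain is unavailable; here the weaker bound $\|s^3t^{-2}\delu_{00}Z^{I_2}u_{\jh}\|_{L^2(H_s)}\leq C(K+1)C_1\varepsilon s^\delta$ for $|I_2|\leq 3$, together with $L^\infty$ control of $Z^{I_1}u_{\kh}$ via lemma \ref{pre lem u it-self}, suffices. The residual piece $[\delu_{00}, Z^I]u_{\jh}$ is rewritten by \eqref{pre lem commutator second-order bar} and \eqref{proof estimate frame 2nd} as lower-order terms $\delu_{\alphar\betar}Z^J u_{\jh}$ that either reduce to the good-derivative case or to a strictly lower-order instance of the present bad-derivative step, so the estimate closes by induction on $|I|$.

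The main obstacle is the absence of any null structure on $B_i^{j\alpha\beta k}$ when $i=\ic$; however, the gap $s^{1/2-\delta}$ between the two target rates is precisely the slack needed to absorb the missing $(s/t)^2$ weight, so the same scheme closes in both cases.
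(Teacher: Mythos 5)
Your overall architecture coincides with the paper's: split $G_i^{\jr\alphar\betar}$ into $\Gt_i^{\jr\alphar\betar}$ plus $B_i^{\jhr\alphar\betar\khr}u_{\khr}$, invoke lemmas \ref{proof lem L2 com 4} and \ref{proof lem com 3} for the $\Gt$-commutator, pass to the one-frame, use the $t^{-1}$ gain on the $\delu_{\alphar}\Phi_{\betar}^{\betar'}$ tail, and treat the $(0,0)$ component with lemmas \ref{proof lem 00 decay} and \ref{proof lem 00 energy estimate}; for $i=\ih$ your account is essentially the paper's proof. The divergence --- and the gap --- is the $(0,0)$ component for general $i$. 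The paper does not forego the null gain there: its proof applies lemma \ref{pre lem one frame} to $B_i^{j\alpha\beta\kh}$ for \emph{every} $i$ and uses $|Z^J\Bu_i^{j00\kh}|\leq C(J)(s/t)^2$ throughout, and this factor is exactly what makes the term close. Your substitute does not. First, the $L^\infty$ control of $Z^{I_1}u_{\kh}$ by lemma \ref{pre lem u it-self} requires $\sum_{|I|\leq |I_1|+p(3)}E_m(s,Z^Iu_{\kh})^{1/2}$ to be under control, i.e. energies of order $|I_1|+2\leq 4$, so it is only usable for $|I_1|\leq 2$; in the Leibniz expansion of $[\Bu_i^{\jhr00\khr}u_{\khr}\delu_{00},Z^{I^*}]u_{\jhr}$ with $|I^*|\leq 4$ you must also handle $|I_1|=3,4$ with $|I_2|\leq 1$. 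In that regime the only split available is $Z^{I_1}u_{\kh}$ in $L^2$ through \eqref{pre lem u it-self energy} (weight $t^{-1}$) against $\delu_{00}Z^{I_2}u_{\jh}$ in $L^\infty$ through lemma \ref{proof lem 00 decay}, namely $|\delu_{00}Z^{I_2}u_{\jh}|\leq C(K+1)C_1\varepsilon\,(t/s)^2t^{-3/2}s^{-1+2\delta}$, and then
\begin{equation*}
\Big\|Z^{I_1}u_{\kh}\,\delu_{00}Z^{I_2}u_{\jh}\Big\|_{L^2(H_s)}
\leq \sup_{H_s\cap\Lambda'}\big(t\,\big|\delu_{00}Z^{I_2}u_{\jh}\big|\big)\cdot\big\|t^{-1}Z^{I_1}u_{\kh}\big\|_{L^2(H_s)}
\leq C(K+1)(C_1\varepsilon)^2\sup_{H_s\cap\Lambda'}\big(t^{3/2}s^{-3+2\delta}\big),
\end{equation*}
which is of size $s^{2\delta}$, since on $H_s\cap\Lambda'$ one has $t\leq (s^2+1)/2$ and the supremum of $t^{3/2}$ is of order $s^{3}$. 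Without the $(s/t)^2$ factor this does not decay at all, let alone at the claimed rate $s^{-1+\delta}$.

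For the same reason your closing heuristic --- that the gap $s^{1/2-\delta}$ between the two target rates is exactly the slack needed to absorb the missing $(s/t)^2$ --- fails near the light cone, where $(t/s)^2$ is of order $s^{2}$, not $s^{1/2}$. So either you retain the null-form bound on $\Bu_i^{j00\kh}$ for all rows $i$ as the paper's proof does (in which case your $i=\ih$ computation already covers the general case and nothing new is needed), or you must produce a genuinely different argument for the Klein--Gordon rows $i=\ic$; as written, the first half of the lemma (general $i$, $|I^*|\leq 4$) is not proved.
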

\begin{proof}
By definition,
$$
[G_i^{\jr\alphar\betar}\del_{\alphar}\del_{\betar},Z^{I^*}]w_{\jr}
= [\Gt_i^{\jr\alphar\betar}\del_{\alphar}\del_{\betar},Z^{I^*}]w_{\jr} + [B_i^{\jhr\alphar\betar}\del_{\alphar}\del_{\betar},Z^{I^*}]u_{\jhr}
$$
The first component is controlled by lemma \ref{proof lem L2 com 4}. The second term is estimated similarly. First one rewrite it under one-frame.
\begin{equation}\label{proof lem L2 com complete eq1}
[B_i^{\jhr\alphar\betar\khr}u_{\khr}\del_{\alphar}\del_{\betar},Z^{I^*}]u_{\jhr}
= [\Bu_i^{\jhr\alphar\betar\khr}u_{\khr}\delu_{\alphar}\delu_{\betar},Z^{I^*}]u_{\jhr}
+ [\Bu_i^{\jhr\alphar\betar\khr}u_{\khr}(\delu_{\alphar}\Phi_{\betar}^{\betar'})\delu_{\betar'},Z^{I^*}]u_{\jhr}
\end{equation}

One recalls that
$$
\big|Z^I\del_{\alpha}\Phi_{\alpha}^{\beta}\big|\leq Ct^{-1}.
$$
With this additional decay, as in the proof of lemma \ref{proof lem com 3}, the $L^2$ norm of the second term in right-hand-side of
\eqref{proof lem L2 com complete eq1} on $H_s$ is bounded by $C(C_1\varepsilon)^2s^{-3/2+\delta}$.

One notices that $B_i^{j\alphar\betar\khr}\xi_{\alphar}\xi_{\betar}$ is a null form so following the lemma \ref{pre lem one frame} one has
$$
\big|Z^I\Bu_i^{j00\khr}\big|\leq C(I)(s/t)^2,
$$
Then exactly as in the proof of lemma \ref{proof lem com 3} and take the estimates given by
lemma \ref{proof lem 00 decay} and lemma \ref{proof lem 00 energy estimate} into consideration, the $L^2$ norm of the first term in right-hand-side
of \eqref{proof lem L2 com complete eq1} on $H_s$ is bounded by $C(C_1\varepsilon)^2s^{-3/2+\delta}$.

The proof of the second estimate is the same. One omits the details.
\end{proof}
\begin{lemma}\label{proof lem Mv 4}
\label{quasilinear lem energy curveterm is small}
Suppose \eqref{proof used decay} and \eqref{proof used energy estimates} hold,
then for any $|I^*|\leq 4$ the following estimates is true:
\begin{equation}
\aligned
&\bigg|\int_{H_s}\frac{s}{t}\bigg(\big(\del_{\alphar}G_i^{\jr\alphar\betar}\big)\del_t Z^{I^*} w_i \del_{\beta}Z^{I^*} w_{\jr}
- \frac{1}{2}\big(\del_t G_i^{\jr\alphar\betar}\big)\del_{\alphar}Z^{I^*} w_i \del_{\betar}Z^{I^*} w_{\jr}\bigg)dx\bigg|
\\
&\quad\leq CC_1\varepsilon s^{-1+\delta}E_m(s,Z^{I^*} w_i)^{1/2}.
\endaligned
\end{equation}
\end{lemma}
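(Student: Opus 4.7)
The plan is to follow the same pattern as the proofs of Lemmas~\ref{proof lem L2 com 4} and \ref{proof lem L2 com complete}, namely: peel off one factor of $E_m(s,Z^{I^*}w_i)^{1/2}$ by Cauchy--Schwarz, then bound the remaining $L^{2}$-norm by a pointwise estimate on $\del G$, with the null structure handling the only bad second-order derivative $\delu_0\delu_0 u_{\khr}$. First, rewrite the integrand so that the factor $s/t$ always sits next to a derivative of $Z^{I^*}w$ that is controlled by the energy. For the first summand the pairing $(s/t)\del_t Z^{I^*}w_i$ is immediate; in the second summand one inserts $\del_{\ar}=\delb_{\ar}-(x^{\ar}/t)\del_t$ so that $(s/t)\del_\alpha Z^{I^*}w$ splits into $(s/t)\delb_a Z^{I^*}w$ and $(s/t)\del_t Z^{I^*}w$, both bounded in $L^2(H_s)$ by $E_m^{1/2}$ thanks to \eqref{pre exression of energy}. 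Cauchy--Schwarz then reduces matters to the weighted $L^2$ estimate
$$
\bigl\|\del G_i^{\jr\alphar\betar}\cdot \del Z^{I^*}w_{\jr}\bigr\|_{L^2(H_s)} \leq C(K+1)C_1\varepsilon\,s^{-1+\delta}\,E_m(s,Z^{I^*}w_i)^{1/2}
$$
for each of $\del_{\alphar}G_i^{\jr\alphar\betar}$ and $\del_t G_i^{\jr\alphar\betar}$.

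Since the coefficients $A,B$ are constants, $\del G$ is a linear combination, with coefficients bounded by $K$, of $\del\del w_k$ and $\del w_k$. I would dispatch the ``easy'' pieces first: $B\del w_k$ for any component; $A\del_\gamma\del_\alpha v_{\kcr}$ for Klein--Gordon components; and the good second-order derivatives $\delu_a\delu_{\betar}u_{\khr}$, $\delu_{\alphar}\delu_b u_{\khr}$ of the wave components. For each of these, the decay estimates \eqref{proof used decay} combined with the inequality $t^{1/2}\leq s$ valid on $H_s\cap\Lambda'$ yield
$$
(t/s)\bigl|\del G_{\mathrm{easy}}\bigr|\leq C(K+1)C_1\varepsilon\,s^{-1+\delta}.
$$
Pairing this pointwise bound with $\|(s/t)\del Z^{I^*}w_j\|_{L^2}\leq E_m^{1/2}$ handles all these contributions at the required level.

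The delicate case is the contribution of $A_{\ih}^{\jh\alphar\betar\gammar\kh}\del_{\alphar}\del_{\gammar}u_{\khr}$ through its bad direction $\delu_0\delu_0 u_{\khr}$, for which Lemma~\ref{proof lem 00 decay} only gives the weighted bound $(s/t)^2|\delu_0\delu_0 u_{\khr}|\leq C(K+1)C_1\varepsilon t^{-3/2}s^{-1+2\delta}$. Here the null condition \eqref{main conditions null} is essential and is applied to the whole trilinear piece $A_{\ih}^{\jh\alphar\betar\gammar\kh}\del_{\alphar}\del_{\gammar}u_{\khr}\cdot\del_{\betar}Z^{I^*}u_{\jhr}$: rewriting via \eqref{pre frame change of frame} in the one-frame, the purely $(0,0,0)$ component carries the coefficient $\Au_{\ih}^{\jh 000\kh}$, for which Lemma~\ref{pre lem one frame} provides $|\Au_{\ih}^{\jh 000\kh}|\leq CK(s/t)^2$. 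The extra $(s/t)^2$ is exactly what compensates the $(t/s)^2$ loss inherent in the pointwise bound on $\delu_0\delu_0 u_{\khr}$, so that the bad piece of the integrand is pointwise bounded by a constant times $(s/t)^3 (t/s)^2 t^{-3/2}s^{-1+2\delta}|\del_t Z^{I^*}u_{\ih}||\del_t Z^{I^*}u_{\jhr}|$; a further Cauchy--Schwarz pairing one of the $\del_t Z^{I^*}u$ with an $(s/t)$ weight then closes the estimate (using also $t^{-1/2}\leq s^{-1/2}$ on $H_s$). The frame-change remainder from \eqref{pre frame change of frame} brings an additional inverse power of $t$ and is even easier.

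Combining the ``easy'' and ``delicate'' parts, and absorbing harmless factors of $C_1\varepsilon s^\delta\leq 1$, yields the announced bound with $M(s)=C(K+1)C_1\varepsilon s^{-1+\delta}$. The main obstacle is exactly the all-zero one-frame coefficient $\Au_{\ih}^{\jh 000\kh}\delu_0\delu_0 u_{\khr}$: without the null condition \eqref{main conditions null} one would have $|\Au_{\ih}^{\jh 000\kh}|=O(1)$ instead of $O((s/t)^2)$, and the pointwise weight of Lemma~\ref{proof lem 00 decay} alone would be insufficient. Everything else is a bookkeeping repetition of the arguments already performed in Lemmas~\ref{proof lem L2 com 4} and \ref{proof lem L2 com complete}.
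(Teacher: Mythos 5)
Your treatment of the ``delicate case'' has a genuine gap. The lemma is stated for every row $i$ of the system and for all pairs $j,k$, whereas the null condition \eqref{main conditions null} constrains only the coefficients whose indices are \emph{all} wave indices, $A_{\ih}^{\jh\alpha\beta\gamma\kh}$ (and $B_{\ih}^{\jh\alpha\beta\kh}$, $P_{\ih}^{\alpha\beta\jh\kh}$). In the Klein--Gordon rows $i=\ic$, and likewise in the mixed terms with $j=\jc$, the coefficients $A_{i}^{j\alpha\beta\gamma\kh}$ multiplying $\del_{\alpha}\del_{\gamma}u_{\kh}$ are arbitrary, so Lemma \ref{pre lem one frame} gives no $(s/t)^2$ smallness for the all-zero one-frame component, and the compensation you rely on --- the null gain $(s/t)^2$ cancelling the $(t/s)^2$ loss in Lemma \ref{proof lem 00 decay} --- is unavailable. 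Since you explicitly claim that without this gain the estimate cannot close, your argument as written leaves all of those terms unproved.

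Moreover that claim is not correct, and this is where your route diverges from the paper: at this order ($|I^*|\le 4$, target rate $s^{-1+\delta}$) no null structure is needed at all. The first line of \eqref{proof used decay}, applied with $Z^J=\del_{\gamma}$, $|J|\le 1$, already gives the pointwise bound $|\del_{\alpha}\del_{\gamma}u_{\kh}|\le CC_1\varepsilon t^{-1/2}s^{-1}$ for the \emph{full} second derivatives, including $\del_t\del_t u_{\kh}$; together with the bounds on $\del w$ and on $\del\del v_{\kc}$ this yields $|\del_{\alpha}G_i^{j\alpha\beta}|+|\del_t G_i^{j\alpha\beta}|\le CKC_1\varepsilon t^{-1/2}s^{-1+\delta}$ for every row $i$. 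The paper's proof is then three lines: since $t\le s^2$ in $\Lambda'$, $(t/s)\,t^{-1/2}s^{-1+\delta}\le s^{-1+\delta}$, so writing the integrand as $\big[(t/s)\del G\big]\cdot\big[(s/t)\del_t Z^{I^*}w_i\big]\cdot\big[(s/t)\del_{\beta}Z^{I^*}w_j\big]$ and applying Cauchy--Schwarz, with $\|(s/t)\del_{\beta}Z^{I^*}w_j\|_{L^2(H_s)}\le CC_1\varepsilon s^{\delta}$ and $\|(s/t)\del_t Z^{I^*}w_i\|_{L^2(H_s)}\le E_m(s,Z^{I^*}w_i)^{1/2}$, gives the stated bound. (Even your ``bad'' term closes without the null condition: Lemma \ref{proof lem 00 decay} gives $(t/s)|\delu_0\delu_0 u_{\kh}|\le C(K+1)C_1\varepsilon\,t^{3/2}s^{-4+2\delta}\le C(K+1)C_1\varepsilon\,s^{-1+2\delta}$.) The one-frame decomposition and the null condition are reserved in the paper for Lemma \ref{proof lem Mv 3}, where the sharper, integrable rate $s^{-3/2+2\delta}$ is required and only the wave rows $i=\ih$ are treated; your proposal essentially reproduces that argument in a place where it is both insufficiently justified (for the Klein--Gordon rows) and unnecessary.
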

\begin{proof}
The proof is mainly a substitution of \eqref{proof used decay} and \eqref{proof used energy estimates}. One writes the estimate of
$$\bigg|\int_{H_s}(s/t)\big(\del_{\alphar}G_i^{\jr\alphar\betar}\big)\del_t Z^{I^*} w_i \del_{\betar}Z^{I^*} w_{\jr} dx \bigg|
$$
in detail and omits the rest part.
First one notices that
$$
\big|\del_{\alpha}G_i^{j\alpha\beta}\big|\leq C\sum_j|\del_{\alpha}w_j| + C\sum_{j,\beta}|\del_{\alpha}\del_{\beta}w_j|\leq C(C_1\varepsilon)^2t^{-1/2}s^{-1}.
$$
Substitute this into the expression,
$$
\aligned
&\quad\bigg|\int_{H_s}(s/t)\big(\del_{\alphar}G_i^{\jr\alphar\betar}\big)\del_t Z^{I^*} w_i \del_{\betar}Z^{I^*} w_{\jr} dx\bigg|
\\
&\leq \bigg|\int_{H_s}\big((t/s)\del_{\alphar}G_i^{\jr\alphar\betar}\big)(s/t)\del_t Z^{I^*} w_i (s/t)\del_{\betar}Z^{I^*} w_{\jr}dx\bigg|
\\
&\leq \sum_{j,\beta}\int_{H_s}C(C_1\varepsilon)t^{1/2}s^{-2}\big|(s/t)\del_t Z^{I^*} w_i (s/t)\del_{\beta}Z^{I^*} w_j\big|dx
\\
&\leq C(C_1\varepsilon)^2s^{-1}\sum_{j,\beta}\int_{H_s}\big|(s/t)\del_t Z^{I^*} w_i (s/t)\del_{\beta}Z^{I^*} w_j\big|dx
\\
&\leq
C(C_1\varepsilon)^2s^{-1}\sum_{j,\beta}\bigg(\int_{H_s}\big|(s/t)\del_{\beta}Z^{I^*} w_j\big|^2dx\bigg)^{1/2} \cdot \bigg(\int_{H_s}\big|(s/t)\del_t Z^{I^*} w_i \big|^2dx\bigg)^{1/2}
\\
&\leq C(C_1\varepsilon)^2 s^{-1+\delta}\bigg(\int_{H_s}\big|(s/t)\del_t Z^{I^*} w_i \big|^2dx\bigg)^{1/2}
\\
&\leq C(C_1\varepsilon)^2 s^{-1+\delta}E_m(s,Z^{I^*}w_i)^{1/2}.
\endaligned
$$
\end{proof}
\begin{lemma}\label{proof lem Mv 3}
Suppose \eqref{proof used decay} and \eqref{proof used energy estimates} hold,
then for any $|I|\leq 3$ the following estimates is true:
\begin{equation}
\aligned
&\bigg|\int_{H_s}\frac{s}{t}\bigg(\big(\del_{\alphar}G_{\ih}^{j\alphar\betar}\big)\del_t Z^I u_{\ih} \del_{\betar}Z^I w_{\jr}
- \frac{1}{2}\big(\del_t G_{\ih}^{\jr\alphar\betar}\big)\del_{\alphar}Z^{I^*} u_{\ih} \del_{\betar}Z^{I^*} w_{\jr}\bigg)dx\bigg|
\\
&\quad\leq CC_1\varepsilon s^{-3/2+2\delta}E_m(s,Z^{I^*} u_{\ih})^{1/2}.
\endaligned
\end{equation}
\end{lemma}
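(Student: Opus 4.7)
The plan is to parallel the proof of Lemma \ref{proof lem Mv 4}: substitute the pointwise decay of $\del_{\alphar}G_{\ih}^{j\alphar\betar}$ (respectively $\del_t G_{\ih}^{\jr\alphar\betar}$) into the integrand, redistribute factors of $(t/s)$ onto the two gradients so that each appears as an $(s/t)$-weighted derivative, and apply Cauchy--Schwarz against the $L^2$ bounds of \eqref{proof used energy estimates}. Three ingredients must cooperate to deliver the improved rate $s^{-3/2+2\delta}$ in place of the $s^{-1+\delta}$ of Lemma \ref{proof lem Mv 4}. First, since the outer factor is a wave component $u_{\ih}$ and $|I|\leq 3$, both $E_m(s,Z^I u_{\ih})^{1/2}\leq CC_1\varepsilon$ and the pointwise bound $|Z^J\del u_{\ih}|\leq CC_1\varepsilon t^{-1/2}s^{-1}$ for $|J|\leq 2$ are free of the $s^\delta$ loss (first and third lines of \eqref{proof used energy estimates}; first line of \eqref{proof used decay}). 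Second, the wave--wave coefficients $A_{\ih}^{\jhr\cdot\cdot\cdot\khr}$ and $B_{\ih}^{\jhr\cdot\cdot\khr}$ satisfy the null conditions \eqref{main conditions null}. Third, the Klein--Gordon contributions to $G_{\ih}^{\jcr\alphar\betar}$ carry the sharper decay $|v_{\kc}|\leq CC_1\varepsilon t^{-3/2}s^\delta$ and $|\del v_{\kc}|\leq CC_1\varepsilon t^{-1/2}s^{-1+\delta}$ from \eqref{proof used decay}.

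Concretely, I would decompose $G_{\ih}^{j\alphar\betar} = A_{\ih}^{\jhr\alphar\betar\gammar\khr}\del_{\gammar}u_{\khr} + B_{\ih}^{\jhr\alphar\betar\khr}u_{\khr} + A_{\ih}^{\jcr\alphar\betar\gammar k}\del_{\gammar}w_k + B_{\ih}^{\jcr\alphar\betar\kcr}v_{\kcr}$ (the no-$u$ hypothesis \eqref{main conditions no-u} kills the cross coefficient $B_{\ih}^{\jcr\alphar\betar\khr}$). For the last two summands -- the KG part -- substitution of the Klein--Gordon decay already yields $|\del G_{\mathrm{KG}}|\leq CC_1\varepsilon t^{-3/2}s^{-1+\delta}$ in place of $t^{-1/2}s^{-1}$, and the Cauchy--Schwarz step of Lemma \ref{proof lem Mv 4} then gains an extra factor of $t^{-1}s^{\delta}$, producing the target rate $s^{-3/2+2\delta}$ directly.

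For the wave--wave summands, I would pass the tensor contraction $G_{\ih}^{\jhr\alphar\betar}\del_{\alphar}\del_{\betar}$ to the one-frame via \eqref{pre frame change of frame}. By Lemma \ref{pre lem one frame}, the null conditions force the purely ``bad'' components $|Z^J\Au_{\ih}^{\jh 000\kh}|$ and $|Z^J\Bu_{\ih}^{\jh 00\kh}|$ to be $O((s/t)^2)$, which supplies the extra $(s/t)^2$ factor needed to bring the $\delu_0\delu_0$ interactions down to the desired rate after redistributing $(t/s)$ onto each $(s/t)\del Z^I u$. All remaining one-frame components $(\alpha,\beta)\neq(0,0)$ involve at least one spatial index and are paired with a ``good'' derivative $\delu_a u_{\kh}$ or $\delu_a u_{\jh}$ of decay $t^{-3/2}$, which is amply sufficient. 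The twin integrand featuring $\del_t G_{\ih}^{\jr\alphar\betar}$ is treated in the same fashion, since $\del_t$ also falls on $A\del w$ or $Bw$ without disturbing the null structure of the form-valued indices.

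The main technical obstacle is propagating the null structure through the extra differentiation in $\del_{\alphar}G$: writing $\del_{\alphar}G_{\ih}^{\jhr\alphar\betar}$ in the one-frame produces, alongside the null-structured piece, a ``Christoffel'' remainder of the form $\Gu_{\ih}^{\jh\alphar\betar}(\delu_{\alphar}\Phi_{\betar}^{\betar'})\del_{\betar'}$ whose prefactor is only $Ct^{-1}$ rather than $(s/t)^2$. One must verify, exactly as in the bookkeeping of Lemmas \ref{proof lem com 3} and \ref{proof lem L2 com complete}, that the $Ct^{-1}$ factor together with the $s^\delta$ margin from $\delta<1/6$ is enough to absorb the remainder within the rate $s^{-3/2+2\delta}$, so that the null gain from Lemma \ref{pre lem one frame} is not spoiled.
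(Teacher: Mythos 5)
Your overall architecture follows the paper's proof (split according to whether the outer gradient factor is a Klein--Gordon or a wave component, split the wave-row coefficients into KG-factor and wave-factor pieces, pass the wave--wave piece to the one-frame and use Lemma \ref{pre lem one frame}, control the frame remainder by $|\delu_{\alpha}\Phi_{\beta}^{\beta'}|\leq Ct^{-1}$), but your treatment of the KG contributions rests on a decay bound that is false, and that is exactly where the improvement over Lemma \ref{proof lem Mv 4} must be earned. The coefficient $G_{\ih}^{\jc\alpha\beta}=A_{\ih}^{\jc\alpha\beta\gamma k}\del_{\gamma}w_k+B_{\ih}^{\jc\alpha\beta\kc}v_{\kc}$ contains derivatives of \emph{wave} components, so $\del_{\alpha}G_{\ih}^{\jc\alpha\beta}$ contains $\del\del u_{\kh}=O(t^{-1/2}s^{-1})$; moreover even the genuinely Klein--Gordon factors $\del v_{\kc}$ and $\del\del v_{\kc}$ only decay like $t^{-1/2}s^{-1+\delta}$ by \eqref{proof used decay} --- only $v_{\kc}$ and $\delb_a v_{\kc}$ enjoy $t^{-3/2}$. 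Hence your claim $|\del G_{\mathrm{KG}}|\leq CC_1\varepsilon t^{-3/2}s^{-1+\delta}$ is an overclaim, and with the honest bound your plan of "running the Cauchy--Schwarz of Lemma \ref{proof lem Mv 4}" --- which must spend $(t/s)^2$ to put the weight $(s/t)$ on \emph{both} $Z^I$-gradients --- returns precisely the rate $s^{-1+\delta}$ of that lemma, not $s^{-3/2+2\delta}$. The paper closes the KG-row term by a different mechanism: the second gradient factor there is $\del_{\beta}Z^I v_{\jc}=Z^{I^*}v_{\jc}$ with $|I^*|\leq 4$, whose \emph{unweighted} $L^2(H_s)$ norm is already controlled by the third line of \eqref{proof used energy estimates} (the mass term of the KG energy); so no factor $(t/s)$ is spent on it, the weight $(s/t)$ stays with $\del_t Z^I u_{\ih}$, and the pointwise bound $|\del_{\alpha}G_{\ih}^{\jc\alpha\beta}|\leq CC_1\varepsilon t^{-1/2}s^{-1}\leq CC_1\varepsilon s^{-3/2}$ then gives $CK(C_1\varepsilon)^2 s^{-3/2+2\delta}E_m(s,Z^Iu_{\ih})^{1/2}$. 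Without this observation (or a substitute for it) the KG part of your argument does not reach the stated rate.

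On the wave--wave piece your sketch is in the paper's spirit but under-specified at the point where the paper works hardest: after the null gain $|\Au_{\ih}^{\jh000\kh}|,|\Bu_{\ih}^{\jh00\kh}|\leq CK(s/t)^2$, the all-zero component still carries the bad second-order factor $\delu_0\delu_0 u_{\kh}$ (from $\del_{\alpha}$ falling on $\del_{\gamma}u_{\kh}$ inside $A_{\ih}^{\jh\alpha\beta\gamma\kh}\del_{\gamma}u_{\kh}$); "redistributing $(t/s)$ onto each $(s/t)\del Z^I u$" exhausts the $(s/t)^2$ on the two first-order gradients and leaves that factor unaccounted for --- it is not controlled by \eqref{proof used energy estimates}, \eqref{proof used energy estimates 2-order} or \eqref{proof used decay}, which only cover $\delu_a\delu_{\beta}$ and $\delu_{\alpha}\delu_b$. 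The paper invokes its Part-Three estimates, Lemmas \ref{proof lem 00 decay} and \ref{proof lem 00 energy estimate}, precisely here. In the present lemma the coefficient is not hit by $Z^I$, so you could alternatively use the crude bound $|\delu_0\delu_0 u_{\kh}|=|\del_t\del_t u_{\kh}|\leq CC_1\varepsilon t^{-1/2}s^{-1}\leq CC_1\varepsilon s^{-3/2}$ (first line of \eqref{proof used decay} with $Z=\del_t$), but this step must be made explicit. Two smaller slips: your displayed decomposition of $G_{\ih}^{\jh\alpha\beta}$ drops the mixed coefficients $A_{\ih}^{\jh\alpha\beta\gamma\kc}\del_{\gamma}v_{\kc}$ and $B_{\ih}^{\jh\alpha\beta\kc}v_{\kc}$, which satisfy no null condition and need their own estimate; and the loss-free pointwise bound $|Z^J\del_{\alpha}u_{\ih}|\leq CC_1\varepsilon t^{-1/2}s^{-1}$ is available in \eqref{proof used decay} only for $|J|\leq 1$, not $|J|\leq 2$.
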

\begin{proof}
Again one will only write the estimate on $\big(\del_{\alpha}G_{\ih}^{j\alpha\beta}\big)\del_t Z^I u_{\ih} \del_{\beta}Z^I w_j$ in detail. By definition,
\begin{equation}\label{proof lem Mv 3 eq1}
\big(\del_{\alphar}G_{\ih}^{\jr\alphar\betar}\big)\del_t Z^I u_{\ih} \del_{\beta}Z^I w_{\jr}
= \big(\del_{\alphar}G_{\ih}^{\jcr\alphar\betar}\big)\del_t Z^I u_{\ih} \del_{\betar}Z^I v_{\jcr}
+ \big(\del_{\alphar}G_{\ih}^{\jhr\alphar\betar}\big)\del_t Z^I u_{\ih} \del_{\betar}Z^I u_{\jhr}.
\end{equation}
The second term in right-hand-side is decomposed again as follows:
\begin{align}\label{proof lem Mv 3 eq2}
\big(\del_{\alphar}G_{\ih}^{\jhr\alphar\betar}\big)\del_t Z^I u_{\ih} \del_{\betar}Z^I u_{\jhr}
=& \del_{\alphar}\big(A_{\ih}^{\jhr\alphar\betar\kcr}\del_{\gamma}v_{\kcr}
+ B_{\ih}^{\jhr\alphar\betar\kcr}v_{\kcr}\big)\del_t Z^I u_{\ih} \del_{\betar}Z^I u_{\jhr}
\\
+& \del_{\alphar}\big(A_{\ih}^{\jhr\alphar\betar\khr}\del_{\gamma}u_{\khr}
+ B_{\ih}^{\jhr\alphar\betar\khr}u_{\khr}\big)\del_t Z^I u_{\ih} \del_{\betar}Z^I u_{\jhr}\nonumber.
\end{align}
The estimates of the first term in right-hand-side of \eqref{proof lem Mv 3 eq1} and the first term in right-hand-side of \eqref{proof lem Mv 3 eq2}
is simpler.
$$
\aligned
&\quad\bigg|\int_{H_s}(s/t)\big(\del_{\alphar}G_{\ih}^{\jcr\alphar\betar}\big)\del_t Z^I u_{\ih} \del_{\betar}Z^I v_{\jcr}dx\bigg|
\\
&\leq \bigg|\int_{H_s}CKC_1\varepsilon t^{-1/2}s^{-1} (s/t)\del_t Z^I u_{\ih} \del_{\betar}Z^I v_{\jcr}dx\bigg|
\\
&\leq CKC_1\varepsilon s^{-3/2} \sum_{\beta,\jc}\bigg(\int_{H_s}\big| (s/t)\del_t Z^I u_{\ih}\big|^2dx\bigg)^{1/2}\cdot\bigg(\int_{H_s}\big|\del_{\beta}Z^I v_{\jc}\big|^2dx\bigg)^{1/2}
\\
&\leq CK(C_1\varepsilon)^2 s^{-3/2+2\delta}\bigg(\int_{H_s}\big| (s/t)\del_t Z^I u_{\ih}\big|^2dx\bigg)^{1/2}
\\
&\leq CK(C_1\varepsilon)^2 s^{-3/2+2\delta}E_m(s,Z^I u_{\ih})^{1/2}.
\endaligned
$$

$$
\aligned
&\quad\bigg|\int_{H_s}(s/t)\del_{\alphar}\big(A_{\ih}^{\jhr\alphar\betar\kcr}\del_{\gamma}v_{\kcr}
+ B_{\ih}^{\jhr\alphar\betar\kcr}v_{\kcr}\big)\del_t Z^I u_{\ih} \del_{\betar}Z^I u_{\jhr}dx\bigg|
\\
&\leq \sum_{\beta,\jh}|\int_{H_s}KCC_1\varepsilon t^{-3/2+2\delta}(t/s)\cdot (s/t)\del_t Z^I u_{\ih} (s/t)\del_{\beta}Z^I u_{\jh}dx\bigg|
\\
&\leq \sum_{\beta,\jh}\int_{H_s}KCC_1\varepsilon t^{-1/2+2\delta}s^{-1}\cdot \big|(s/t)\del_t Z^I u_{\ih} (s/t)\del_{\beta}Z^I u_{\jh}\big|dx
\\
&\leq C(C_1\varepsilon)^2 s^{-3/2+2\delta}\bigg(\int_{H_s}\big|(s/t)\del_t Z^I u_{\ih}\big|^2dx\bigg)^{1/2}
\\
&\leq C(C_1\varepsilon)^2 s^{-3/2+2\delta}E_m(s,Z^I u_{\ih})^{1/2}.
\endaligned
$$

To estimate the last term of \eqref{proof lem Mv 3 eq2}, one needs the null condition \eqref{main conditions null}. First, one writes this term under
one-frame:
\begin{equation}\label{proof lem Mv 3 eq3}
\aligned
&\del_{\alphar}\big(A_{\ih}^{\jhr\alphar\betar\khr}\del_{\gamma}u_{\khr}
+ B_{\ih}^{\jhr\alphar\betar\khr}u_{\khr}\big)\del_t Z^I u_{\ih} \del_{\betar}Z^I u_{\jhr}
\\
&=\delu_{\alphar}\big(\Au_{\ih}^{\jhr\alphar\betar\khr}\delu_{\gamma}u_{\khr}
+ \Bu_{\ih}^{\jhr\alphar\betar\khr}u_{\khr}\big)\del_t Z^I u_{\ih} \delu_{\betar}Z^I u_{\jhr}
\\
&\quad+ \del_{\alphar'}\big(\Phi_{\alphar}^{\alphar'}\Phi_{\betar}^{\betar'}\big)\big(\Au_{\ih}^{\jhr\alphar\betar\khr}\delu_{\gamma}u_{\khr}
+ \Bu_{\ih}^{\jhr\alphar\betar\khr}u_{\khr}\big)\del_t Z^I u_{\ih} \del_{\betar'}Z^I u_{\jhr}.
\endaligned
\end{equation}
Taking into account the fact that $\big|\del_{\gamma}\big(\Phi_{\alpha}^{\alpha'}\Phi_{\beta}^{\beta'}\big)\big|\leq Ct^{-1}$ as in the proof of
lemma \ref{proof lem com 3}, one can easily proof that
$$
\aligned
&\bigg|\int_{H_s}(s/t)\del_{\alphar'}\big(\Phi_{\alphar}^{\alphar'}\Phi_{\betar}^{\betar'}\big)\big(\Au_{\ih}^{\jhr\alphar\betar\khr}\delu_{\gamma}u_{\khr}
+ \Bu_{\ih}^{\jhr\alphar\betar\khr}u_{\khr}\big)\del_t Z^I u_{\ih} \del_{\betar'}Z^I u_{\jhr}dx\bigg|
\\
&\leq C(C_1\varepsilon)^2s^{-3/2}E_m(s,Z^Iu_{\ih})^{1/2}.
\endaligned
$$

Now the most difficult term, the first term in right-hand-side of \eqref{proof lem Mv 3 eq3} will be considered.
$$
\aligned
&\quad\delu_{\alphar}\big(\Au_{\ih}^{\jhr\alphar\betar\gammar\khr}\delu_{\gammar}u_{\khr}
+ \Bu_{\ih}^{\jhr\alphar\betar\khr}u_{\khr}\big)\cdot\del_t Z^I u_{\ih} \delu_{\betar}Z^I u_{\jhr}
\\
&= \delu_0\big(\Au_{\ih}^{\jhr000\khr}\delu_0u_{\khr}\delu_0Z^Iu_{\jhr}
+ \Bu_{\ih}^{\jhr00\khr}u_{\khr}\delu_0Z^Iu_{\jhr}\del_tu_{\ih}\big)\cdot \del_tZ^I u_{\ih}
\\
&\quad+\delu_0\big(\Au_{\ih}^{\jhr0\br0\khr}\delu_0u_{\khr}\delu_{\br}Z^Iu_{\jhr} + \Au_{\ih}^{\jhr00\cred\khr}\delu_{\cred}u_{\khr}\delu_{\br}Z^Iu_{\jhr}
+\Au_{\ih}^{\jhr0\br\cred\khr}\delu_{\cred}u_{\khr}\delu_{\br}Z^Iu_{\jhr}\big)\cdot \del_tZ^I u_{\ih}
\\
&\quad+ \delu_0\big(\Bu_{\ih}^{\jhr0\br\khr}u_{\khr}\delu_{\br}Z^Iu_{\jhr}\del_tu_{\ih}\big)\cdot \del_tZ^I u_{\ih}
\\
&\quad+ \delu_{\ar}\big(\Au_{\ih}^{\jhr\ar\betar\gammar\khr}\delu_{\gammar}u_{\khr}
+ \Bu_{\ih}^{\jhr\ar\betar\khr}u_{\khr}\big)\del_t Z^I u_{\ih} \delu_{\betar}Z^I u_{\jhr}
\\
&:= \delu_0\big(\Au_{\ih}^{\jhr000\khr}\delu_0u_{\khr}\delu_0Z^Iu_{\jhr}
+ \Bu_{\ih}^{\jhr00\khr}u_{\khr}\delu_0Z^Iu_{\jhr}\del_tu_{\ih}\big)\cdot \del_tZ^I u_{\ih} + \mathscr{N}\cdot \del_t Z^I u_{\ih}.
\endaligned
$$
Notice that $\mathscr{N}$ is a linear combination of \\
$\Gamma \del_a\del_{\beta}u_{\ih}\del_{\gamma}Z^Iu_{\jh}\del_t u_{\kh}$,
$\Gamma \del_{\alpha}\del_b u_{\ih}\del_{\gamma}Z^Iu_{\jh}\del_t u_{\kh}$,
$\Gamma \del_{\alpha}\del_{\beta}u_{\ih}\del_c Z^Iu_{\jh}\del_t u_{\kh}$,
$\Gamma \del_{\beta}u_{\ih}\del_a\del_{\gamma}Z^Iu_{\jh}\del_t u_{\kh}$,
\\
$\Gamma \del_a\del_{\beta}u_{\ih}\del_{\gamma}Z^Iu_{\jh}\del_t u_{\kh}$,
$\Gamma \del_{\alpha}\del_bu_{\ih}\del_{\gamma}Z^Iu_{\jh}\del_t u_{\kh}$
and
$\Gamma \del_{\alpha}\del_{\beta}u_{\ih}\del_cZ^Iu_{\jh}\del_t u_{\kh}$ with $\Gamma$ a function bounded by $CK$.
By \eqref{proof used decay} and \eqref{proof used energy estimates 2-order}, one sees easily that
$$
\bigg|\int_{H_s}(s/t)\mathscr{N}\cdot \del_t Z^I u_{\ih}dx \bigg|\leq C(C_1\varepsilon)^2s^{-3/2+2\delta}E_m(s,Z^Iu_{\ih})^{1/2}.
$$

Taking into account the null condition \eqref{main conditions null}, one has by lemma \ref{pre lem one frame}:
$$
\big|\Au_{\ih}^{\jh000\kh}\big| + \big|\Bu_{\ih}^{\jh00\kh}\big| \leq C(s/t)^2.
$$
Then by lemma \ref{proof lem 00 decay} and lemma \ref{proof lem 00 energy estimate}, one can show that
$$
\bigg|\int_{H_s}(s/t)\delu_0\big(\Au_{\ih}^{\jhr000\khr}\delu_0u_{\khr}\delu_0Z^Iu_{\jhr}
+ \Bu_{\ih}^{\jhr00\khr}u_{\khr}\delu_0Z^Iu_{\jhr}\del_tu_{\ih}\big)\cdot \del_tZ^I u_{\ih}\bigg| \leq C(C_1\varepsilon)^2s^{-3/2+\delta}E_m(s,Z^Iu_{\ih})^{1/2}.
$$
So finally the desired result is proved.
\end{proof}
\subsection{Last Part -- Bootstrap argument}
First one verifies \eqref{pre lem energy curved energy is big} by the following lemma:
\begin{lemma}\label{proof lem curved energy is big}
Suppose \eqref{proof used decay} holds with $KC_1\varepsilon$ small enough. Then following estimate holds
$$
\sum_iE_g(s,Z^I w_i) \leq 3\sum_iE_m(s,Z^I w_i).
$$
\end{lemma}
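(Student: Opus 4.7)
The plan is to prove the stronger bound $|E_G(s,Z^I w_i) - E_m(s,Z^I w_i)| \leq \kappa \sum_i E_m(s, Z^I w_i)$ for some $\kappa = O(KC_1\varepsilon)$. Once $KC_1\varepsilon$ is small, this yields both the stated $E_G \leq 3 E_m$ and, as a free byproduct, the dual bound $E_m \leq 3 E_G$ demanded by the hypothesis \eqref{pre lem energy curved energy is big} of lemma \ref{pre lem energy}. I begin from the defining identity \eqref{pre expression of curved energy}:
\begin{equation*}
E_G - E_m = \int_{H_s}G_i^{\jr\alphar\betar}\bigl(2\del_t Z^I w_i\,\del_{\betar}Z^I w_{\jr}\,\Psi_{\alphar}^0 - \del_{\alphar}Z^I w_i\,\del_{\betar}Z^I w_{\jr}\bigr)dx,
\end{equation*}
using that $(1,-x^a/t)_\alpha$ is exactly the entry $\Psi_\alpha^0$ of the one-frame transition matrix.

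Next, I convert to the one-frame via $\del_\alpha = \Psi_\alpha^{\gammar}\delu_{\gammar}$ in each factor. By the same algebraic collection used to derive $\Tu^{00}$ in lemma \ref{pre lem one frame}, the bad-bad product $\delu_0 Z^I w_i\,\delu_0 Z^I w_{\jr}$ picks up coefficient \emph{exactly} $\Gu_i^{\jr 00}$, while every other contribution carries at least one good derivative $\delu_a Z^Iw = \delb_a Z^Iw$ and coefficient of the form $\pm\Gu_i^{\jr 0c}$, $\pm\Gu_i^{\jr c0}$, or $\pm\Gu_i^{\jr cd}$. Using $G_i^{j\alpha\beta} = A_i^{j\alpha\beta\gammar k}\del_{\gammar}w_k + B_i^{j\alpha\beta k}w_k$, lemma \ref{pre lem frame}, and the decay estimates \eqref{proof used decay}, one has the crude $\sup_{H_s}|\Gu_i^{\jr\alpha\beta}| \leq CKC_1\varepsilon$. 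For the bad-bad coefficient I need the sharper $\sup_{H_s}(t/s)^2|\Gu_i^{\jr 00}| \leq CKC_1\varepsilon$; I get it by expanding $\del_{\gammar}w_k = \Psi_{\gammar}^{\delta}\delu_{\delta}w_k$ once more, which decomposes $\Gu_i^{\jr 00}$ into a fully-null piece $\Au_i^{\jr 000 k}\delu_0 w_k + \Bu_i^{\jr 00 k}w_k$ and a remainder against good derivatives $\delu_c w_k$. For wave-wave interactions the null conditions \eqref{main conditions null} together with lemma \ref{pre lem one frame} force $|\Au_i^{\jh 000 \kh}|+|\Bu_i^{\jh 00 \kh}|\leq CK(s/t)^2$; for the wave-Klein-Gordon couplings the structural exclusions \eqref{main conditions no-u} rule out the dangerous combinations, while the remaining ones carry a factor of $v_{\kc}$ or $\del v_{\kc}$ whose extra decay from \eqref{proof used decay} supplies the missing $(s/t)^2$; the residual $\delu_c w_k$ piece already has the requisite $t^{-3/2}$ decay.

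With these coefficient bounds I split $E_G - E_m$ into bad-bad, mixed, and good-good integrals. The bad-bad piece is controlled by
\begin{equation*}
\int_{H_s}|\Gu_i^{\jr 00}||\del_t Z^Iw_i||\del_t Z^Iw_{\jr}|\,dx \leq \sup_{H_s}(t/s)^2|\Gu_i^{\jr 00}|\cdot\int_{H_s}|(s/t)\del_t Z^Iw_i|\,|(s/t)\del_t Z^Iw_{\jr}|\,dx \leq CKC_1\varepsilon\sum_i E_m(s, Z^Iw_i),
\end{equation*}
invoking Cauchy-Schwarz and the second expression of $E_m$ in \eqref{pre exression of energy}. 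The mixed and good-good pieces are bounded identically from the crude sup estimate on $\Gu^{\alpha\beta}$ together with $\int_{H_s}|\delb_a Z^Iw|^2 \leq E_m$. Summing in $i$ and requiring $KC_1\varepsilon$ small enough closes the argument.

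The main obstacle is precisely the bad-bad piece: a naive bound $\sup|\Gu^{00}|\cdot\int|\del_t Z^Iw|^2$ is fatal since $\int|\del_t Z^Iw|^2$ is not controlled by $E_m$ and the weight $(t/s)^2$ is unbounded on $H_s$. It is the $(s/t)^2$ gain provided by the one-frame expression of $G$, furnished by the null conditions in the wave-wave sector and by the structural assumption \eqref{main conditions no-u} plus fast Klein-Gordon decay in the mixed sector, that makes the estimate close.
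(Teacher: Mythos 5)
Your target inequality $\big|E_G(s,Z^Iw_i)-E_m(s,Z^Iw_i)\big|\le CKC_1\varepsilon\sum_iE_m(s,Z^Iw_i)$ is the right one, but two steps of your argument do not hold as written. First, the mixed (bad--good) terms are not handled by what you state: after passing to the one-frame the cross contributions are of the form $\Gu_i^{\jr 0c}\,\delu_0Z^Iw_i\,\delu_cZ^Iw_{\jr}$ (and transposes), and $\delu_0Z^Iw=\del_tZ^Iw$ is the bad derivative; $\int_{H_s}|\del_tZ^Iw|^2dx$ is \emph{not} controlled by $E_m$ (only $(s/t)\del_tZ^Iw$ is, by \eqref{pre exression of energy}), so ``the crude sup estimate on $\Gu^{\alpha\beta}$ together with $\int_{H_s}|\delb_aZ^Iw|^2\le E_m$'' does not close; you must insert the weight $(s/t)$ on the bad factor and extract a compensating $(t/s)$ from the coefficient, exactly as you did for the bad--bad piece. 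Second, your coefficient bound for the bad--bad piece is justified by mechanisms that do not cover all index combinations: the null conditions \eqref{main conditions null} are imposed only when the \emph{equation} index is a wave index, so the claimed bound $|\Au_i^{\jh000\kh}|+|\Bu_i^{\jh00\kh}|\le CK(s/t)^2$ is unjustified for $i=\ic$. The lemma sums over all $i$, and the curved energies of the Klein--Gordon components involve $G_{\ic}^{\jh\alpha\beta}=A_{\ic}^{\jh\alpha\beta\gamma\kh}\del_{\gamma}u_{\kh}+B_{\ic}^{\jh\alpha\beta\kh}u_{\kh}+\cdots$, whose purely wave factors are neither null, nor excluded by \eqref{main conditions no-u}, nor accompanied by a Klein--Gordon factor; your cited mechanisms say nothing about them.

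Both gaps are repaired by the very observation you dismiss as ``fatal'': on $H_s\cap\Lambda'$ one has $t\le (s^2+1)/2$, so the crude decay of the coefficients already beats the weights, e.g.\ $(t/s)^2|\del_{\gamma}u_{\kh}|\le CC_1\varepsilon\,t^{3/2}s^{-3}\le CC_1\varepsilon$, $(t/s)^2|u_{\kh}|\le CC_1\varepsilon\,t^{1/2}s^{-1}\le CC_1\varepsilon$, and $(t/s)^2\big(|v_{\kc}|+|\del v_{\kc}|\big)\le CC_1\varepsilon\,t^{1/2}s^{-2+\delta}\le CC_1\varepsilon$. This is essentially the paper's whole proof: it never passes to the one-frame and uses neither \eqref{main conditions null} nor \eqref{main conditions no-u}; it bounds $|G_i^{j\alpha\beta}|\le CK\sum_k\big(|\del w_k|+|w_k|\big)$, rewrites each derivative factor as $(t/s)\cdot(s/t)\del_{\alpha}Z^Iw$, and checks that the resulting weights $t^{1/2}s^{-2+\delta}$, $t^{3/2}s^{-3}$, $t^{1/2}s^{-1}$ are uniformly bounded on the support, giving $|E_G-E_m|\le CKC_1\varepsilon\sum_iE_m$ directly. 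So the ``main obstacle'' you identify is not an obstacle for this lemma: the null structure is genuinely needed elsewhere (e.g.\ in lemmas \ref{proof lem L2 F 3} and \ref{proof lem Mv 3}), not here, and your proof should either invoke the geometric bound $t\lesssim s^2$ uniformly (making the null-condition detour unnecessary) or supply the missing $(t/s)$-weighted coefficient bounds for the mixed terms and for the Klein--Gordon equation indices.
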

\begin{proof}
One notice that
$$
\sum_{i,j,\alpha,\beta} \big|G_i^{j\alpha\beta}\big| \leq CK\sum_i\big(|\del w_i| + |w_i|\big).
$$

Then by simple calculation

$$
\aligned
&\quad \sum_i\big|E_G(s,w_i) - E_m(s,w_i) \big|
\\
&=  \bigg|2\int_{H_s} \big(\del_t w_i \del_{\beta}w_j G_i^{j\alpha\beta}\big) \cdot (1,-x^a/t) dx
- \int_{H_s} \big(\del_{\alpha}w_i\del_{\beta}w_j G_i^{j\alpha\beta}\big)dx\bigg|
\\
&\leq 2\int_{H_s}\bigg(\sum_{i,j,\alpha,\beta}\big|G_i^{j\alpha\beta}\big|\bigg)\cdot\bigg(\sum_{\alpha,k}|\del_{\alpha}w_k|^2\bigg) dx
\\
&\leq 2CK\int_{H_s}\sum_i\big(|\del w_i| + |w_i|\big)\cdot\bigg(\sum_{\alpha,k}|\del_{\alpha}w_k|^2\bigg)dx
\\
&\leq 2CKC_1\varepsilon \int_{H_s}\big(t^{-3/2}s^{\delta} + t^{-1/2}s^{-1} + t^{-3/2}s\big)(t/s)^2\cdot\bigg(\sum_{\alpha,k}|(s/t)\del_{\alpha}w_k|^2\bigg)dx
\\
&=    2CKC_1\varepsilon \int_{H_s}\big(t^{1/2}s^{-2+\delta} + t^{3/2}s^{-3} + t^{1/2}s^{-1}\big)\cdot\bigg(\sum_{\alpha,k}|(s/t)\del_{\alpha}w_k|^2\bigg)dx
\\
&\leq CKC_1\varepsilon \sum_iE_m(s,w_i).
\endaligned
$$
Here one takes $CKC_1\varepsilon \leq 2/3$ with $C$ a universal constant, then the lemma is proved.
\end{proof}

With all those preparations above, one is now ready to prove the main theorem.
\begin{proof}[Proof of theorem \ref{main thm main}]
Suppose that $\{w_i\}$ is the unique regular local-in-time solution of \eqref{main eq main}. Let $C_1,\,\varepsilon$ be positive constants. Suppose that
$[B+1,T^*]$ is the largest interval containing $B+1$ on which \eqref{proof energy assumption} holds for any $B+1\leq s\leq T^*$.
As one discussed in section 4.1,
there always exists an $\varepsilon'$ such that, when
$E^*_G(B+1,Z^{I^*}w_j)^{1/2}\leq \varepsilon'$,
$E_G(B+1,Z^{I^*}w_j)^{1/2}\leq (1/4)C_1\varepsilon$.
So by continuity $T^*>0$
when $\varepsilon'$ sufficiently small.

Also, if $T^*< +\infty$, then when $s=T^*$, at lest one of the three inequalities of \eqref{proof energy assumption}
will be replaced by a equality. That is as least on of the following equations holds:
\begin{equation}\label{proof energy assumption last}
\aligned
&E_m(T^*,Z^{I^*} v_{\jc})^{1/2} = C_1\varepsilon s^{\delta}, \quad \text{for}\quad j_0+1\leq \jc\leq n_0,\quad 0\leq |I^*|\leq 4,
\\
&E_m(T^*,Z^{I^*} u_{\ih})^{1/2} = C_1\varepsilon s^{\delta},\quad \text{for}\quad 1\leq \ih\leq j_0, \quad|I^*|= 4,
\\
&E_m(T^*,Z^I u_{\ih})^{1/2} = C_1\varepsilon,\quad \text{for}\quad 1\leq \ih\leq j_0, \quad |I|\leq 3.
\endaligned
\end{equation}
One derives the equation \eqref{main eq main} with respect to $Z^{I^*}$ with $|I^*|\leq 4$:
$$
\Box Z^{I^*}w_i + G_i^{\jr\alphar\betar}\del_{\alphar}\del_{\betar}Z^{I^*}w_{\jr} + D_i^2 w_i = [G_i^{\jr\alphar\betar}\del_{\alphar}\del_{\betar},Z^{I^*}]w_{\jr} + Z^{I^*}F_i.
$$

By energy lemma \ref{pre lem energy} and lemma \ref{proof lem curved energy is big}, using the notation of lemma \ref{pre lem energy}, when $|I^*|\leq 4$,
$$
\bigg(\sum_i E_m(s,Z^{I^*}w_i)\bigg)^{1/2} \leq \bigg(\sum_i E_m(B+1,Z^{I^*}w_i)\bigg)^{1/2} + \int_{B+1}^s \sqrt{3}\sum_i L_i(\tau) + \sqrt{3n_0}M(\tau)d\tau.
$$
By lemma \ref{proof lem L2 F 4} and \ref{proof lem L2 com 4},
$$
\sum_iL_i(s) + M(s) \leq C(C_1\varepsilon)^2s^{-1+\delta}
$$
with $C$ a universal constant.
Then
$$
\bigg(\sum_i E_m(s,Z^{I^*}w_i)\bigg)^{1/2} \leq C_{n_0}(C_1\varepsilon)^2\delta^{-1} s^{\delta} + (1/4)C_1\varepsilon.
$$
When $\varepsilon \leq \frac{\delta}{4C_1C_{n_0}}$,
$$
\bigg(\sum_i E_m(s,Z^{I^*}w_i)\bigg)^{1/2}\leq (1/2)C_1\varepsilon s^{\delta},
$$
which leads to
$$
E_m(s,Z^{I^*}w_i) \leq (1/2)C_1\varepsilon s^{\delta}.
$$
Similarly, one derives \ref{main eq main} with respect to $Z^I$ with $|I|\leq 3$:
$$
\Box Z^I u_{\ih} + G_{\ih}^{\jr\alphar\betar}\del_{\alphar\betar}w_{\jr} = [G_{\ih}^{\jr\alphar\betar}\del_{\alphar}\del_{\betar},Z^I]w_{\jr} + Z^IF_{\ih}.
$$

Also by lemma \ref{pre lem energy},
$$
\bigg(\sum_i E_m(s,Z^Iw_i)\bigg)^{1/2} \leq \bigg(\sum_i E_m(B+1,Z^Iw_i)\bigg)^{1/2} + \int_{B+1}^s \sqrt{3}\sum_i L_i(\tau) + \sqrt{3n_0}M(\tau)d\tau.
$$
By lemma \ref{proof lem L2 F 3}, lemma \ref{proof lem L2 com complete} and lemma \ref{proof lem Mv 3}
$$
\sum_iL_i(s) + M(s)\leq C(C_1\varepsilon)^2s^{-3/2+\delta}.
$$
When $\varepsilon \leq \frac{1-4\delta}{4C_1C_{n_0}}$,
$$
E_m(s,Z^Iw_i)^{1/2}\leq (1/2)C_1\varepsilon.
$$
When taking $\varepsilon \leq \min\big\{\frac{\delta}{4C_1C_{n_0}},\,\frac{1-4\delta}{4C_1C_{n_0}}\big\}$
\footnote{Also, taking $C_1\varepsilon<1$ small enough such that lemma \ref{proof lem 00 expression} and lemma \ref{proof lem curved energy is big} hold.}
, non of the equality of \eqref{proof energy assumption last}
holds. This contradiction leads to the desired result.
\end{proof}
\appendix
\section{Local existence for small initial data}
One will establish the following local-in-time existence result for small initial data. The interest is to control the energy on hyperboloid $H_{B+1}$ by the energy on plan $\{t=0\}$.
Consider the Cauchy problem in $\mathbb{R}^{n+1}$:
\begin{equation}\label{appendix eq nonlinear}
\left\{
\aligned
&g_i^{\alpha\beta}(w,\del w)\del_{\alpha\beta} w_i + D_i^2 w_i = F_i(w,\del w),
\\
&w_i(B+1,x) =\varepsilon'{w_i}_0,\quad \del_t w_i(B+1,x) = \varepsilon'{w_i}_1.
\endaligned
\right.
\end{equation}
Here
$$
\aligned
&g_i(w,\del w) = m^{\alpha\beta} + A^{\alpha\beta\gamma j}_i\del_{\gamma}w_j + B^{\alpha\beta j}w_j ,
\\
&F_i(w,\del w) = P^{\alpha\beta jk}_i \del_{\alpha}w_j \del_{\beta}w_k + Q^{\alpha jk}_i \del_{\alpha}w_j w_k + R^{jk}_i w_j w_k .
\endaligned
$$
These $A^{\alpha\beta\gamma j}_i,B^{\alpha\beta j},P^{\alpha\beta jk}_i,Q^{\alpha jk}_i,R^{jk}_i$ are constants.
$({w_i}_0,{w_i}_1)\in H^{s+1}\times H^s$ functions and supported on the disc $\{|x| \leq B\}$. In general the following local-in-time existence holds
\begin{theorem}\label{appendix Thm A}
For any integer $s\geq 2p(n)-1$,
there exists a time interval $[0,T(\varepsilon')]$ on which
the cauchy problem \eqref{appendix eq nonlinear} has an unique solution in sense of distribution $w_i(t,x)$. Further more 
$$
w_i(t,x) \in C([0,T(\varepsilon')], H^{s+1}) \cap C^1([0,T(\varepsilon')], H^s),
$$
and when $\varepsilon'$ sufficiently small,
$$
T(\varepsilon') \geq C(A\varepsilon')^{-1/2}
$$
where $A$ is a constant depending only on ${w_i}_0$ and ${w_i}_1$.
Let $E_g(T,w_i)$ be the hyperbolic energy defined in the section 2.2. For any $\varepsilon, C_1>0$, there exists an $\varepsilon'$ such that
$$
\sum_{i}E_g(B+1, w_i) \leq C_1\varepsilon.
$$
\end{theorem}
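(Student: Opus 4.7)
The plan is to treat \eqref{appendix eq nonlinear} as a perturbation of the diagonal linear system $m^{\alphar\betar}\del_{\alphar\betar} w_i + D_i^2 w_i = 0$, apply a Picard iteration to get local existence in $H^{s+1}\times H^s$, use finite propagation speed to localize the solution to the cone $\Lambda'$, and then compare the flat slab energy on $\{B+1\leq t\leq T(\varepsilon')\}$ with the hyperboloidal energy $E_g(B+1,w_i)$.

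For the local existence and lifespan: since the principal part is diagonal in $i$ and equals Minkowski at $w=0$, and $s\geq 2p(n)-1$ is large enough to place $\del w$ in $L^\infty$ by Sobolev embedding, the quasilinear system is strictly hyperbolic on a neighbourhood of $w=0$. A Picard iteration based on the standard energy estimate for the linearized problem --- whose well-posedness follows from the symmetry of the coefficients $A_i^{\alpha\beta\gamma j}$, $B^{\alpha\beta j}$ inherited from the symmetry conditions analogous to \eqref{pre condition symmetry} --- produces the claimed solution
\[
w_i\in C([B+1,B+1+T'],H^{s+1})\cap C^1([B+1,B+1+T'],H^s),
\]
and the classical continuation criterion together with a direct Grönwall estimate yields the lifespan lower bound $T(\varepsilon')\geq C(A\varepsilon')^{-1/2}$ for all sufficiently small $\varepsilon'$, with $A$ determined by the $H^{s+1}\times H^s$ norms of the profiles $({w_i}_0,{w_i}_1)$.

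For the passage from the flat slab to the hyperboloid: by Huygens' principle / finite propagation, the solution is supported in $\Lambda'=\{|x|\leq t-1\}$, and the set $H_{B+1}\cap\Lambda'$ is a compact disc confined to the time strip $B+1\leq t\leq \tfrac12\big(1+(B+1)^2\big)$. Choosing $\varepsilon'$ so small that $T(\varepsilon')$ exceeds the upper endpoint of this strip places the trace of $w_i$ on $H_{B+1}$ entirely inside the existence region. One then parametrizes $H_{B+1}\cap\Lambda'$ as a graph $t=\sqrt{(B+1)^2+|x|^2}$ over a disc in $\mathbb{R}^3$, inserts the slab bound $\sup_t\bigl(\|w_i(t)\|_{H^1}+\|\del_t w_i(t)\|_{L^2}\bigr)\leq C_B\varepsilon'$ (obtained by a uniform linearized energy estimate on the slab) into the explicit expression of $E_g$ from \eqref{pre expression of curved energy}, and concludes $\sum_i E_g(B+1,w_i)\leq C'_B(\varepsilon')^2$. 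Shrinking $\varepsilon'$ once more, only in terms of $\varepsilon,\,C_1,\,C'_B$, gives $\sum_i E_g(B+1,w_i)\leq C_1\varepsilon$.

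The main technical obstacle is Step 1, more precisely the uniform symmetric-hyperbolicity of the linearizations along the Picard iteration: the $\del w$-dependence of $g_i^{\alphar\betar}$ forces one to differentiate the system once before closing the energy estimate, which is exactly why the regularity threshold must be raised to $s\geq 2p(n)-1$. The smallness of $\varepsilon'$ is used here twice, first to keep $g_i^{\alphar\betar}$ uniformly close to $m^{\alphar\betar}$ and second to close the Grönwall iteration up to the required time. Once these are in place, finite propagation and the flat-to-hyperboloid energy comparison are purely geometric and depend on neither the nonlinear structure nor the mass parameters $D_i$.
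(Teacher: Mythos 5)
Your first two steps (Picard iteration for local existence on a slab, finite propagation speed to localize in $\Lambda'$, and choosing $\varepsilon'$ so small that the lifespan $T(\varepsilon')\gtrsim(A\varepsilon')^{-1/2}$ covers the strip $B+1\leq t\leq\frac{1}{2}\big((B+1)^2+1\big)$ containing $H_{B+1}\cap\Lambda'$) match the paper's strategy, although you only assert the $(\varepsilon')^{-1/2}$ lifespan; in the paper it comes from a specific mechanism, namely that the energy controls only $\del w$, so the undifferentiated $w$ in $g_i$ and $F_i$ is bounded pointwise only by $C(t+B+1)\varepsilon'A$ on the expanding support, and the Gr\"onwall factor $\exp\big(CA\varepsilon'\int_{B+1}^t(\tau+B+1)\,d\tau\big)$ then forces $t\lesssim(A\varepsilon')^{-1/2}$. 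That is a presentational weakness, not the main problem.

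The genuine gap is your last step. You propose to bound $\sum_iE_g(B+1,w_i)$ by parametrizing $H_{B+1}\cap\Lambda'$ as the graph $t=\sqrt{(B+1)^2+|x|^2}$ and ``inserting'' the slab bound $\sup_t\big(\|w_i(t)\|_{H^1}+\|\del_tw_i(t)\|_{L^2}\big)\leq C_B\varepsilon'$ into the expression \eqref{pre expression of curved energy}. This inference is not valid: the hyperboloidal energy is an integral of $|\del w(t(x),x)|^2$ (plus lower-order and $G$-terms) over the graph, and a bound on $x\mapsto\del w(t(x),x)$ in $L^2$ does not follow from an $L^\infty_tL^2_x$ bound on $\del w$ --- a function concentrated in a thin neighbourhood of the graph has arbitrarily small energy on every flat slice but order-one energy on the hyperboloid, so a trace on a non-characteristic spacelike graph cannot be read off from sup-in-time slice norms alone. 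What the paper does instead (and what you need) is the divergence-theorem argument: take $\del_tw_i$ as multiplier and integrate the resulting identity over the region $V(B)=\{t\geq B+1,\;t^2-|x|^2\leq(B+1)^2\}\cap\Lambda'$ lying between the flat slice $\{t=B+1\}$ and $H_{B+1}$. Stokes' formula then gives
\begin{equation*}
E_g(B+1,w_i)-E^*_g(B+1,w_i)=\int_{V(B)}\Big(F_i\,\del_tw_i-[\,\cdot\,]+\del_{\alphar}g^{\alphar\betar}\del_tw_i\del_{\betar}w_i-\tfrac12\del_tg^{\alphar\betar}\del_{\alphar}w_i\del_{\betar}w_i\Big)dx\,dt,
\end{equation*}
and the right-hand side is a spacetime integral of quadratic and cubic terms that \emph{is} controlled by the slab energy bounds and the pointwise estimate $|\del^Jw_i|\leq C(t+B+1)\varepsilon'A$, provided $A\varepsilon'\leq(B+1)^{-2}$; this yields $E_g(B+1,w_i)\leq CA\varepsilon'$, and then smallness of $\varepsilon'$ gives $\sum_iE_g(B+1,w_i)\leq C_1\varepsilon$. (Alternatively you could salvage a trace estimate via the fundamental theorem of calculus in $t$, but that consumes an extra time derivative and must be argued explicitly; as written, your comparison step fails.) A minor further inaccuracy: in the appendix system each equation carries its own scalar metric $g_i^{\alphar\betar}$, so no cross-symmetry condition of the type \eqref{pre condition symmetry} is needed for the linearized energy estimate; smallness of $g_i-m$ in $L^\infty$ suffices.
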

\begin{proof}
The proof is just a classical iteration procedure. One will not give the details but the key steps.
One defines the standard energy associated to a curved metric $g$
$$
E^*_g(s,w_i) := \int_{\mathbb{R}^n} \big(g^{00}(\del_t u)^2 - g^{ij}\del_iu\del_ju\big) dx.
$$
One takes the following iteration procedure:
\begin{equation}\label{appendix proof iteration}
\left\{
\aligned
&g^{\alpha\beta}_i(w^k,\del w^k) \del_{\alpha\beta} w_i^{k+1} = F(w^k,\del w^k),
\\
&w^{k+1}_i(0,x) = \varepsilon'{w_i}_0,\quad \del_t w^{k+1}_i(0,x) = \varepsilon'{w_i}_0,
\endaligned
\right.
\end{equation}
and take $w_i^0$ as the solution of the following linear Cauchy problem:
$$
\left\{
\aligned
& \Box w_i = 0,
\\
& w_i(0,x) = \varepsilon'{w_i}_0, \quad \del_t w_i (0,x) = \varepsilon'{w_i}_1.
\endaligned
\right.
$$
Suppose that for any $|I|\leq 2p(n)-1$,
\begin{equation}\label{appendix proof energy assumption}
\aligned
&\varepsilon'A \geq e\cdot E^*_g(B+1,\del^I w_i^k)^{1/2},
\\
&\varepsilon'A \geq E^*_g(t,\del^I w_i^k)^{1/2}.
\endaligned
\end{equation}
Taking the size of the support of the solution $w_i^k(t,\cdot)$ into consideration, by Sobolev's inequality, for any $|J| \leq p(n)-1$,
\begin{equation}\label{appendix proof decay estimate}
|\del^J w_i^k|(t,x) \leq C(t+B+1) \varepsilon' A.
\end{equation}
Now one wants to get the energy estimate on $\del^I w_i^{k+1}$. By the same method used in \cite{So}, one gets
$$
\aligned
E^*_g(t,\del^I w_i^{k+1})^{1/2}
&\leq E_g(t,\del^I w_i^{k+1}) \exp\bigg(CA\varepsilon'\int_{B+1}^t(\tau+B+1)d\tau\bigg)
\\
&\leq e^{-1} \varepsilon'A \exp\bigg(CA\varepsilon'\int_{B+1}^t(\tau+B+1)d\tau\bigg)
\endaligned
$$
When
$$
\sqrt{CA\varepsilon'} \leq (B+1)^{-1}
$$
and
$$
t\leq \frac{1}{3}(CA\varepsilon')^{-1/2},
$$
one gets that
$$
E^*_g(t,\del^I w_i^{k+1})^{1/2} \leq \varepsilon' A.
$$
Then by an standard method presented in the proof of theorem ... of \cite{So},
$$
\lim_{k \rightarrow \infty} w_i^{k} = w_i
$$
is the unique solution of \eqref{appendix eq nonlinear}, and $w_i \in C([0,T(\varepsilon')], H^{s+1}) \cap C^1[0,T(\varepsilon')], H^s)$.
Here one can take
$$
T(\varepsilon') = C(A\varepsilon)^{-1/2}
$$

To estimate $E_g(B+1,Z^I w_i)$, one takes $\del_t w_i$ as the multiplier and by the standard procedure of energy estimate,
$$
\aligned
E_g(B+1,Z^I w_i) - E^*_g(B+1,Z^I w_i)
&= \int_{V(B)} \big(Z^I F_i(w, \del w)\del_t w_i - [Z^I,g^{\alpha\beta}\del_{\alpha\beta}] w_i \cdot \del_t w_i\big) dx
\\
&+ \int_{V(B)}\bigg(\del_{\alpha}g^{\alpha\beta}\del_t w_i\del_{\beta}w_i -\frac{1}{2}\del_t g^{\alpha\beta}\del_{\alpha}w_i \del_{\beta} w_i\bigg) dx,
\endaligned
$$
where $V(B) : = \{(t,x): t\geq B+1, t^2-|x|^2 \leq B+1\}\cap \Lambda'$.
When $A\varepsilon'\leq (B+1)^{-2}$, thanks to \eqref{appendix proof decay estimate} and \eqref{appendix proof energy assumption},
the right hand side can be controlled by $CA\varepsilon'$. Then one gets
$$
E_g(B+1,Z^I w_i)\leq CA\varepsilon'.
$$
\end{proof}
\begin{center}
\bf{\Large{Acknowledgments}}
\end{center}
Part of this work is motivated by a joint work with the author's doctoral supervisor Prof. Ph. LEFLOCH. The author is grateful to him.
The author is also grateful to his parents Dr. Qing-jiu MA and Ms. Huiqin-YUAN, his fianc\'ee Miss Yuan-yi YANG and his comrade Ye-ping ZHANG,
for their successive supports and encouragements.

\end{document}